\newcommand{\dntri}{\bigtriangledown}
\newcommand{\uptri}{\triangle}
\newcommand{\fa}{\mathfrak{a}}
\newcommand{\mo}{\mathfrak{o}}
\newcommand{\SO}{\mathcal{O}}
\newcommand{\PP}{\mathbb{P}}
\newcommand{\ZZ}{\mathbb{Z}}
\DeclareMathOperator{\charf}{char}
\DeclareMathOperator{\per}{perm}
\DeclareMathOperator{\reg}{reg}
\DeclareMathOperator{\syz}{syz}
\DeclareMathOperator{\soc}{soc}
\newcommand{\st}{\; \mid \;}
\def\urltilda{\kern -.15em\lower .7ex\hbox{\~{}}\kern .04em}
\numberwithin{figure}{section}
\numberwithin{equation}{section}
\newtheorem{theorem}{Theorem}[section]
\newtheorem{lemma}[theorem]{Lemma}
\newtheorem{proposition}[theorem]{Proposition}
\newtheorem{corollary}[theorem]{Corollary}
\newtheorem{conjecture}[theorem]{Conjecture}
\theoremstyle{definition}
\newtheorem{remark}[theorem]{Remark}
\newtheorem{example}[theorem]{Example}
\begin{document}

\title[Syzygy bundles and the weak Lefschetz property]{Syzygy bundles and the weak Lefschetz property of almost complete intersections}

\author[D.\ Cook II]{David Cook II${}^{\star}$}
\address{Department of Mathematics \& Computer Science, Eastern Illinois University, Charleston, IL 46616}
\email{\href{mailto:dwcook@eiu.edu}{dwcook@eiu.edu}}

\author[U.\ Nagel]{Uwe Nagel}
\address{Department of Mathematics, University of Kentucky, 715 Patterson Office Tower, Lexington, KY 40506-0027}
\email{\href{mailto:uwe.nagel@uky.edu}{uwe.nagel@uky.edu}}

\thanks{
    Part of the work for this paper was done while the authors were partially supported by the National Security Agency
    under Grant Number H98230-09-1-0032.
    The second author was also partially supported by the National Security Agency under Grant Number H98230-12-1-0247
    and by the Simons Foundation under grants \#208869 and \#317096.\\
    \indent ${}^{\star}$ Corresponding author.}

\keywords{Monomial ideals, weak Lefschetz property, determinants, lozenge tilings, syzygy bundle,  generic splitting type}

\begin{abstract}
Deciding the presence of the weak Lefschetz property often is a challenging problem. Continuing studies in \cite{BK, CN-IJM, MMN-2011}, in this work an in-depth study is carried out in the case of Artinian monomial ideals with four generators in three variables. We use a connection to lozenge tilings to describe semistability of the syzygy bundle of such an ideal, to determine its generic splitting type, and to decide the presence of the weak Lefschetz property.  We provide results in both characteristic zero and positive
    characteristic.
\end{abstract}

\maketitle

\section{Introduction} \label{sec:intro}

The \emph{weak Lefschetz property} for a standard graded Artinian algebra $A$ over a field $K$ is a natural property. It says that there is a linear form $\ell \in A$ such that the multiplication map $\times \ell : [A]_i \rightarrow [A]_{i+1}$ has maximal rank for all $i$ (i.e., it is injective or surjective).
Its presence implies, for example, restrictions on the Hilbert function and graded Betti numbers of the algebra (see \cite{HMNW, MN}). Recent studies have connected the weak Lefschetz property to many other questions (see, e.g., \cite{BMMNZ,BK-p,GIV, MMO, MMMNW, MN-lower, NS1,St-faces}). Thus, a great variety of tools from representation theory, topology, vector bundle theory, hyperplane arrangements, plane partitions, splines, differential geometry, among others has been used to decide the presence of the weak Lefschetz property (see, e.g.,  \cite{BMMNZ2, BK, CGJL, HSS, HMMNWW, KRV, KV, LZ, MMN-2012,MN-survey, Stanley-1980}). An important aspect has also been the role of the characteristic of $K$.

Any Artinian quotient of a polynomial ring in at most two variables has the weak Lefschetz property regardless of the characteristic of $K$ (see \cite{MZ0} and \cite[Proposition 2.7]{CN-small-type}). This is far from true for quotients of rings with three or more variables.  Here we consider quotients $R/I$, where $R = K[x,y,z]$ and $I$ is a monomial ideal containing a power of $x, y$, and $z$. If $I$ has only three generators, then $R/I$ has the weak Lefschetz property, provided the base field has characteristic zero (see \cite{Stanley-1980, ikeda, Wa, BTK}). We focus on the case, where $I$ has four minimal generators, extending previous work in \cite{BK, CN-IJM, MMN-2011}. To this end we use a combinatorial approach developed in \cite{CN-resolutions, CN-small-type} that  involves lozenge tilings, perfect matchings, and families of non-intersecting lattice paths.  Some of our results have already been used in \cite{MMMNW}.

In Section~\ref{sec:trireg}, we recall the connection between  monomial ideals in three variables and so-called  triangular regions.  We use it to establish
sufficient and necessary conditions for a balanced triangular subregion to be tileable (see Corollary~\ref{cor:pp-tileable}).  In Section~\ref{sec:alg},
we show that the tileability of a triangular subregion $T_d (I)$ is related to the semistability of the syzygy bundle of
the ideal $I$ (see Theorem~\ref{thm:tileable-semistable}).  We further recall the relation between  lozenge tilings of
triangular regions and the weak Lefschetz property. All the results up to this point are true for arbitrary Artinian  monomial ideals of $R$.
  In Section~\ref{sec:amaci} we consider exclusively  Artinian monomial ideals  with four minimal generators. Our results on the weak Lefschetz property of $R/I$ are summarized in
Theorem~\ref{thm:amaci-wlp}. In particular, they provide further evidence for a conjecture in \cite{MMN-2011}, which
concerns the case where $R/I$ is a level algebra. Furthermore, we determine the generic splitting type of the syzygy bundle of
$I$ in all cases but one (see Propositions~\ref{pro:st-nss} and \ref{pro:split-type-semist}). In the remaining case we
show that determining the generic splitting type is equivalent to deciding whether $R/I$ has the weak Lefschetz property
(see Theorem~\ref{thm:equiv}). This result is independent of the  characteristic.

\section{Triangular regions}\label{sec:trireg}

Besides introducing notation, we recall needed facts from the combinatorial approach to Lefschetz properties developed in \cite{CN-resolutions, CN-small-type}. We also establish a new criterion for tileability by lozenges.

Let $R = K[x,y,z]$ be a standard graded polynomial ring over a field $K$, i.e., $\deg{x} = \deg{y} = \deg{z} = 1$.
Unless specified otherwise, $K$ is always an arbitrary field. All $R$-modules in this paper are assumed to be finitely generated and graded. Let $A = R/I = \oplus_{j \ge 0} [A]_j$ be a graded quotient of $R$.
The \emph{Hilbert function} of $A$ is the function $h_A: \ZZ \to \ZZ$ given by $h_A(j) = \dim_K [A]_j$. The
\emph{socle} of $A$, denoted $\soc{A}$, is the annihilator of $\mathfrak{m} = (x, y, z)$, the homogeneous
maximal ideal of $R$, that is, $\soc{A} = \{a \in A \st a \cdot \mathfrak{m} = 0\}$.

\subsection{Triangular regions}\label{sub:tri}~

Let $I$ be a monomial ideal of $R$. As $R/I$ is standard graded, the monomials of $R$ of degree $d \in \ZZ$
that are \emph{not} in $I$ form a $K$-basis of $[R/I]_d$.

Let $d \geq 1$ be an integer. Consider an equilateral triangle of side length $d$ that is composed of $\binom{d}{2}$
downward-pointing ($\dntri$) and $\binom{d+1}{2}$ upward-pointing ($\uptri$) equilateral unit triangles. We label the
downward- and upward-pointing unit triangles by the monomials in $[R]_{d-2}$ and $[R]_{d-1}$, respectively, as
follows: place $x^{d-1}$ at the top, $y^{d-1}$ at the bottom-left, and $z^{d-1}$ at the bottom-right, and continue
labeling such that, for each pair of an upward- and a downward-pointing triangle that share an edge, the label of the
upward-pointing triangle is obtained from the label of the downward-pointing triangle by multiplying with a variable.
The resulting labeled triangular region is the \emph{triangular region (of $R$) in degree $d$}
and is denoted $\mathcal{T}_d$. See Figure~\ref{fig:triregion-R}(i) for an illustration.

\begin{figure}[!ht]
    \begin{minipage}[b]{0.48\linewidth}
        \centering
        \includegraphics[scale=1]{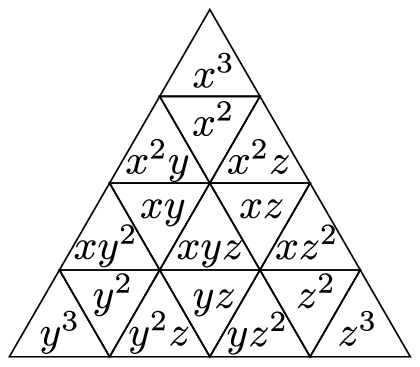}\\
        \emph{(i) $\mathcal{T}_4$}
    \end{minipage}
    \begin{minipage}[b]{0.48\linewidth}
        \centering
        \includegraphics[scale=1]{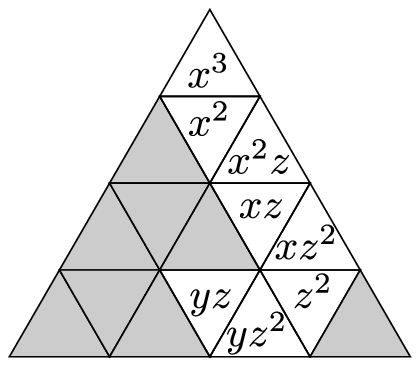}\\
        \emph{(ii) $T_4(xy, y^2, z^3)$}
    \end{minipage}
    \caption{A triangular region with respect to $R$ and with respect to $R/I$.}
    \label{fig:triregion-R}
\end{figure}

Throughout this manuscript we order the monomials of $R$ with the \emph{graded reverse-lexicogra\-phic order}, that is,
$x^a y^b z^c > x^p y^q z^r$ if either $a+b+c > p+q+r$ or $a+b+c = p+q+r$ and the \emph{last} non-zero entry in
$(a-p, b-q, c-r)$ is \emph{negative}. For example, in degree $3$,
\[
    x^3 > x^2y > xy^2 > y^3 > x^2z > xyz > y^2z > xz^2 > yz^2 > z^3.
\]
Thus in $\mathcal{T}_4$, see Figure~\ref{fig:triregion-R}(iii), the upward-pointing triangles are ordered starting at
the top and moving down-left in lines parallel to the upper-left edge.

We generalize this construction to quotients by monomial ideals. Let $I$ be a monomial ideal of $R$. The
\emph{triangular region (of $R/I$) in degree $d$}, denoted by $T_d(I)$, is the part of $\mathcal{T}_d$ that is obtained
after removing the triangles labeled by monomials in $I$. Note that the labels of the downward- and
upward-pointing triangles in $T_d(I)$ form $K$-bases of $[R/I]_{d-2}$ and $[R/I]_{d-1}$, respectively. It is 
more convenient to illustrate such regions with the removed triangles darkly shaded instead of being removed.
See Figure~\ref{fig:triregion-R}(ii) for an example.

Notice that the regions missing from $\mathcal{T}_d$ in $T_d(I)$ can be viewed as a union of (possibly overlapping)
upward-pointing triangles of various side lengths that include the upward- and downward-pointing triangles inside them.
Each of these upward-pointing triangles corresponds to a minimal generator of $I$ that has, necessarily, degree at most
$d-1$. We can alternatively construct $T_d(I)$ from $\mathcal{T}_d$ by removing, for each minimal generator $x^a y^b
z^c$ of $I$ of degree at most $d-1$, the \emph{puncture associated to $x^a y^b z^c$} which is an upward-pointing
equilateral triangle of side length $d-(a+b+c)$ located $a$ triangles from the bottom, $b$ triangles from the
upper-right edge, and $c$ triangles from the upper-left edge. See Figure~\ref{fig:triregion-punctures} for an example.
We call $d-(a+b+c)$ the \emph{side length of the puncture associated to $x^a y^b z^c$}, regardless of possible overlaps
with other punctures in $T_d (I)$.

\begin{figure}[!ht]
    \begin{minipage}[b]{0.48\linewidth}
        \centering
        \includegraphics[scale=1]{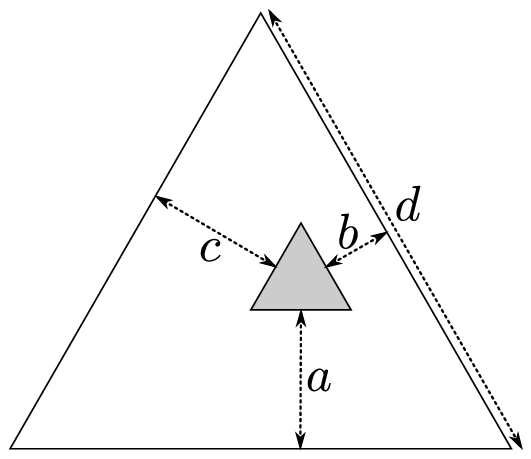}\\
        \emph{(i) $T_{d}(x^a y^b z^c)$}
    \end{minipage}
    \begin{minipage}[b]{0.48\linewidth}
        \centering
        \includegraphics[scale=1]{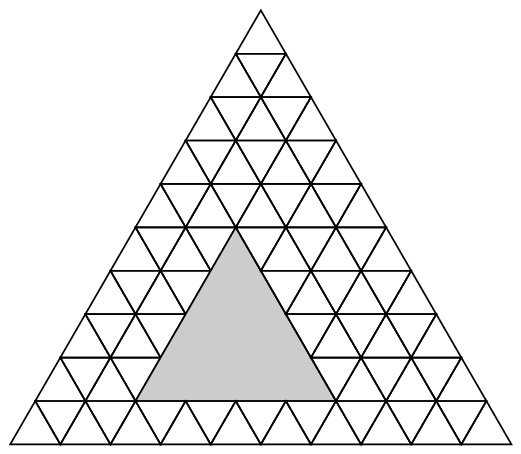}\\
        \emph{(ii) $T_{10}(xy^3z^2)$}
    \end{minipage}
    \caption{$T_d(I)$ as constructed by removing punctures.}
    \label{fig:triregion-punctures}
\end{figure}

We say that two punctures \emph{overlap} if they share at least an edge. Two punctures are said to be \emph{touching}
if they share precisely a vertex.

\subsection{Tilings with lozenges}\label{sub:tiling}~

A \emph{lozenge} is a union of two unit equilateral triangles glued together along a shared edge, i.e., a rhombus with
unit side lengths and angles of $60^{\circ}$ and $120^{\circ}$. Lozenges are also called calissons and diamonds in the
literature.  See Figure~\ref{fig:triregion-intro}.

\begin{figure}[!ht]
    \begin{minipage}[b]{0.48\linewidth}
        \centering
        \includegraphics[scale=1]{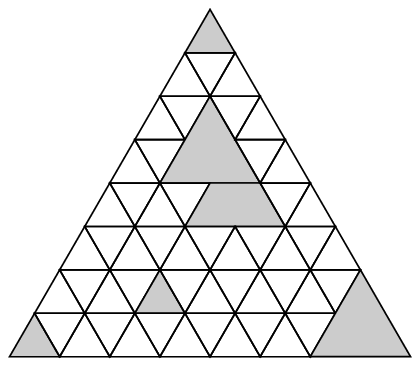}
    \end{minipage}
    \begin{minipage}[b]{0.48\linewidth}
        \centering
        \includegraphics[scale=1]{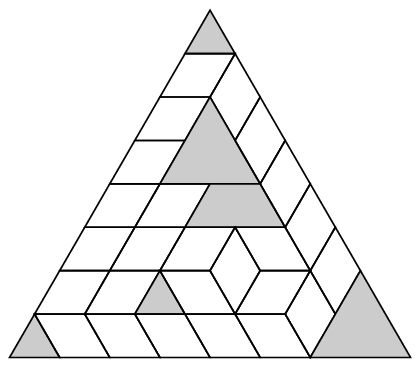}
    \end{minipage}
    \caption{A triangular region $T \subset \mathcal{T}_8$ together with one of its $13$ tilings.}
    \label{fig:triregion-intro}
\end{figure}

Fix a positive integer $d$ and consider the triangular region $\mathcal{T}_d$ as a union of unit triangles. Thus a \emph{subregion}
$T \subset \mathcal{T}_d$ is a subset of such triangles. We retain their labels. As above, we say that
a subregion $T$ is \emph{$\dntri$-heavy}, \emph{$\uptri$-heavy}, or \emph{balanced} if there are more downward pointing
than upward pointing triangles or less, or if their numbers are the same, respectively. A subregion is \emph{tileable}
if either it is empty or there exists a tiling of the region by lozenges such that every triangle is part of exactly one
lozenge.  Since a lozenge in $\mathcal{T}_d$ is the union of a downward-pointing and an upward-pointing triangle, and every
triangle is part of exactly one lozenge, a tileable subregion is necessarily balanced.

Let $T \subset \mathcal{T}_d$ be any subregion. Given a monomial $x^a y^b z^c$ with degree less than $d$, the
\emph{monomial subregion} of $T$ associated to $x^a y^b z^c$ is the part of $T$ contained in the triangle $a$ units from
the bottom edge, $b$ units from the upper-right edge, and $c$ units from the upper-left edge. In other words, this
monomial subregion consists of the triangles that are in $T$ and the puncture associated to the monomial $x^a y^b z^c$.
See Figure~\ref{fig:triregion-subregion} for an example.

\begin{figure}[!ht]
    \includegraphics[scale=1]{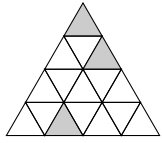}
    \caption{The monomial subregion of $T_{8}(x^7, y^7, z^6, x y^4 z^2, x^3 y z^2, x^4 y z)$
        (see Figure~\ref{fig:triregion-intro}) associated to $x y^2 z$.}
    \label{fig:triregion-subregion}
\end{figure}

Replacing a tileable monomial subregion by a puncture of the same size does not alter tileability.

\begin{lemma}{\cite[Lemma 2.1]{CN-resolutions}}\label{lem:replace-tileable}
    Let $T \subset \mathcal{T}_d$ be any subregion.  If a monomial subregion $U$ of $T$  is tileable,
    then $T$ is tileable if and only if $T \setminus U$ is tileable.

    Moreover, each tiling of $T$ is obtained by combining a tiling of $T \setminus U$ and a tiling of $U$.
\end{lemma}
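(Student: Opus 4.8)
The plan is to reduce both assertions to a single combinatorial observation about how lozenges can interact with the boundary of the upward-pointing triangle $\Delta$ of side length $d-(a+b+c)$ that defines the monomial subregion, so that $U = T \cap \Delta$. Writing $W = T \setminus U$ for the triangles of $T$ lying outside $\Delta$, I note that $U$ and $W$ partition the triangles of $T$, and that everything will follow once I show that in \emph{any} tiling of $T$ no lozenge straddles $\partial\Delta$, i.e.\ has one of its two triangles in $U$ and the other in $W$.

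First I would record the geometric key step: every unit triangle of $\Delta$ that shares an edge with $\partial\Delta$ is upward-pointing. Along the horizontal bottom side the edge-adjacent triangles are exactly the bases of the bottom-row upward triangles, since a downward triangle meets the bottom side only at its apex; along each of the two slanted sides the edge-adjacent triangle in every row is the extreme upward triangle of that row. Consequently a lozenge crossing $\partial\Delta$ must pair an upward triangle inside $\Delta$ with a downward triangle outside $\Delta$. In particular, no downward triangle of $U$ and no upward triangle of $W$ can lie in a straddling lozenge. I expect the careful verification of this claim on all three sides of $\Delta$ to be the one place demanding genuine care.

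Next comes the counting argument, which is where the hypothesis that $U$ is tileable enters. Fix a tiling of $T$ and sort its lozenges into those internal to $U$, those internal to $W$, and the straddling ones. By the previous paragraph each downward triangle of $U$ is forced to be matched with an upward triangle of $U$, so the number of internal-to-$U$ lozenges equals the number of $\dntri$-triangles of $U$; meanwhile each $\uptri$-triangle of $U$ lies in an internal-to-$U$ lozenge or in a straddling one, each straddling lozenge containing exactly one such triangle. Hence the number of straddling lozenges equals $(\#\,\uptri\text{-triangles of } U) - (\#\,\dntri\text{-triangles of } U)$. Since $U$ is tileable it is balanced, so this difference vanishes and there are no straddling lozenges; this balancing step is the real content, the rest being bookkeeping.

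Finally I would assemble the conclusions. The absence of straddling lozenges shows that any tiling of $T$ restricts to a tiling of $U$ together with a tiling of $W = T \setminus U$, which is precisely the \emph{moreover} statement and immediately yields that tileability of $T$ forces tileability of $T \setminus U$. For the converse I would overlay any tiling of $T\setminus U$ with the given tiling of $U$; since $U$ and $T\setminus U$ share no triangles, their union is a genuine lozenge tiling of $T$. This establishes the equivalence, and the degenerate case $U = \emptyset$ is immediate.
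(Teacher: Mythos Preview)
The paper does not supply its own proof of this lemma: it is quoted verbatim from \cite[Lemma~2.1]{CN-resolutions} with only the citation as justification. So there is nothing in the present paper to compare your argument against.

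That said, your argument is correct and self-contained. The geometric observation that every unit triangle of the upward-pointing triangle $\Delta$ that is edge-adjacent to $\partial\Delta$ is itself upward-pointing is the right key fact, and your verification of it on all three sides is sound; equivalently, one can simply note that two edge-adjacent unit triangles in $\mathcal{T}_d$ always point in opposite directions, and that the triangles just outside $\Delta$ along each side are downward-pointing. The counting step is also correct: since every downward triangle of $U$ is forced into an internal-to-$U$ lozenge, the number of such lozenges equals the number of $\dntri$-triangles of $U$, and hence the number of straddling lozenges is exactly the $\uptri/\dntri$ imbalance of $U$, which vanishes because $U$ is tileable and therefore balanced. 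The assembly of the equivalence and the ``moreover'' clause from the absence of straddling lozenges is routine, as you say. Your write-up would serve perfectly well as a replacement for the bare citation.
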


Let $U \subset \mathcal{T}_d$ be a monomial subregion, and let $T, T' \subset \mathcal{T}_d$ be any subregions such that
$T \setminus U = T' \setminus U$. If $T \cap U$ and $T' \cap U$ are both tileable, then $T$ is tileable if and only if
$T'$ is, by Lemma \ref{lem:replace-tileable}. In other words, replacing a tileable monomial subregion of a triangular
region by a tileable monomial subregion of the same size does not affect tileability.

\begin{theorem}{\cite[Theorem 2.2]{CN-resolutions}}\label{thm:tileable}
    Let $T = T_d(I)$ be a balanced triangular region, where $I \subset R$ is any monomial ideal.  Then $T$ is tileable if and only if
    $T$ has no $\dntri$-heavy monomial subregions.
\end{theorem}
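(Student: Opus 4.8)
The plan is to reformulate tileability as a perfect‑matching problem and then apply Hall's theorem. I would form the bipartite graph $G$ whose two vertex classes are the downward‑ and upward‑pointing unit triangles of $T$, with an edge joining two triangles exactly when they share a unit edge. Every such edge is a lozenge, so lozenge tilings of $T$ correspond bijectively to perfect matchings of $G$. Since $T$ is balanced, the two classes have equal size, so $T$ is tileable if and only if $G$ has a matching saturating the $\dntri$-class; by Hall's theorem this holds if and only if $|N(S)| \ge |S|$ for every set $S$ of downward triangles, where $N(S)$ is the set of upward triangles sharing an edge with a triangle of $S$. It will be convenient to use the monomial description of adjacency: labelling down‑triangles by the degree‑$(d-2)$ monomials not in $I$ and up‑triangles by the degree‑$(d-1)$ monomials not in $I$, a down‑triangle $m$ is adjacent to the up‑triangles among $mx, my, mz$, and the monomial subregion of $T$ attached to $w = x^a y^b z^c$ consists of exactly the triangles of $T$ whose label is divisible by $w$.

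For the direction \textbf{tileable $\Rightarrow$ no $\dntri$-heavy monomial subregion}, fix a monomial subregion $U = T \cap \Delta$, where $\Delta$ is the upward‑pointing subtriangle attached to $w$. The key geometric observation is that no downward triangle lying in $\Delta$ has an edge on the boundary of $\Delta$; consequently all three of its neighbours lie in $\Delta$, hence in $U$. Therefore in any tiling every downward triangle of $U$ is matched to an upward triangle of $U$, which forces $U$ to contain at least as many upward as downward triangles. On the monomial side one checks the sharper statement $N(\dntri(U)) = \uptri(U)$, where $\dntri(U)$ and $\uptri(U)$ are the down‑ and up‑triangles of $U$: every up‑triangle divisible by $w$ has a variable‑quotient that is again divisible by $w$ and lies automatically in $T$. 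Thus the Hall‑deficiency of the set $\dntri(U)$ equals the $\dntri$-heaviness $|\dntri(U)| - |\uptri(U)|$ of $U$, so a $\dntri$-heavy subregion would violate Hall and contradict tileability.

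For the converse \textbf{no $\dntri$-heavy monomial subregion $\Rightarrow$ tileable}, I would suppose $T$ is balanced but not tileable, so by the deficiency form of Hall's theorem the function $f(S) = |N(S)| - |S|$ attains a negative minimum over subsets $S$ of the $\dntri$-class. Since $S \mapsto |N(S)|$ is submodular, $f$ is submodular and its minimizers form a lattice; let $S^{\ast}$ be the unique maximal minimizer and put $W = N(S^{\ast})$, so that $S^{\ast} = \{m : N(m) \subseteq W\}$. The crux is to show that such an extremal set is geometrically a monomial subregion: using the divisibility description of $N$, one argues that $\{m : N(m) \subseteq W\}$ decomposes into the down‑sets of one or more monomial subregions $U_1, \dots, U_k$ with pairwise disjoint neighbourhoods. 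Granting this, additivity of $f$ over the disjoint pieces together with the identity $N(\dntri(U_i)) = \uptri(U_i)$ from the first direction gives $f(S^{\ast}) = \sum_i \big( |\uptri(U_i)| - |\dntri(U_i)| \big)$, so $f(S^{\ast}) < 0$ forces some $U_i$ to be $\dntri$-heavy, a contradiction.

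The main obstacle is precisely this structural identification: the minimizer $S^{\ast}$ is specified only by the abstract closure condition $N(m) \subseteq W$, and one must prove that it is a union of triangular monomial subregions rather than an irregular subset. I would approach this through the divisibility poset on the monomial labels, showing that the closure condition makes $S^{\ast}$ a union of ``divisibility filters'', each of which the geometry organizes into an upward‑pointing subtriangle intersected with $T$. An alternative, and possibly cleaner, route is an induction on the number of unit triangles using Lemma~\ref{lem:replace-tileable}: if $T$ contains a proper tileable monomial subregion, replace it by a puncture and recurse, reducing to the case in which any obstruction must come from the complementary region. In either approach the delicate point is controlling how $N(S)$ interacts with the punctures, so that no deficiency ``leaks'' across the boundary of a subregion.
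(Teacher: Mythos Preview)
The paper does not actually prove this theorem: it is quoted verbatim from \cite[Theorem~2.2]{CN-resolutions} and used as a black box, so there is no ``paper's own proof'' to compare against. What I can do is assess your strategy on its own.

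Your forward direction is correct and well argued. The observation that every downward unit triangle inside the subtriangle $\Delta$ attached to $w$ has all three of its edges interior to $\Delta$ is exactly the geometric fact needed; combined with the monomial-ideal property (divisors of a monomial not in $I$ are not in $I$), it gives the equality $N(\dntri(U)) = \uptri(U)$ you state. So a tiling restricted to $U$ matches $\dntri(U)$ into $\uptri(U)$, forcing $|\dntri(U)| \le |\uptri(U)|$.

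The backward direction, as you yourself flag, has a real gap, and it is not merely bookkeeping. You need the structural claim that the maximal Hall minimizer $S^{\ast}$ (together with $W = N(S^{\ast})$) is a union of monomial subregions with pairwise disjoint neighbourhoods. But the closure condition $S^{\ast} = \{m : N(m) \subseteq W\}$ only says that the boundary of the region $S^{\ast} \cup W$ inside $T$ consists of up-triangle edges pointing outward; it does not immediately force each connected piece to be the intersection of $T$ with a single upward-pointing subtriangle. A connected component could, a priori, have a boundary that is a more general ``staircase'' polygon with $60^{\circ}/120^{\circ}$ angles whose edges all lie on up-triangles, and such polygons are not all subtriangles. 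Your suggestion to pass through ``divisibility filters'' is too coarse: the set of degree-$(d-2)$ monomials divisible by a fixed $w$ is a subtriangle, but there is no reason the set $\{m : N(m) \subseteq W\}$ is a union of such filters, because membership in $S^{\ast}$ depends on which of $mx, my, mz$ lie in $I$ as well as in $W$.

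Your alternative inductive route via Lemma~\ref{lem:replace-tileable} is closer to something that could be made to work, but as stated it also has a hole: to recurse you need to locate a \emph{proper} tileable monomial subregion inside a balanced $T$ with no $\dntri$-heavy subregions, and nothing you have written guarantees one exists. In particular, a balanced subregion need not be tileable (that is what you are trying to prove), so you would first have to find a subregion that is already known to be tileable for a structural reason---for instance, a subregion forced to be a hexagon or a thin strip---and argue that one must occur. That existence step is where the real content lies.
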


Let $I$ be a monomial ideal of $R$ whose punctures in $\mathcal{T}_d$ (corresponding to the minimal generators of $I$ having degree less than $d$)
have side lengths that sum to $m$. Then we define the \emph{over-puncturing coefficient} of $I$ in degree $d$ to be $\mo_d (I) = m - d$.
If $\mo_d (I) < 0$, $\mo_d (I) = 0$, or $\mo_d (I) > 0$, then we call $I$ \emph{under-punctured}, \emph{perfectly-punctured}, or
\emph{over-punctured} in degree $d$, respectively.

Let now $T = T_d(I)$ be a triangular region with punctures whose side lengths sum to $m$.  Then we define similarly the \emph{over-puncturing coefficient}
of $T$ to be $\mo_d (T) = m - d$.  If $\mo_d (T) < 0$, $\mo_d (T) = 0$, or $\mo_d (T) > 0$, then we call $T$ \emph{under-punctured}, \emph{perfectly-punctured},
or \emph{over-punctured}, respectively.

Observe that different monomial ideals can determine the same triangular region of $\mathcal{T}_d$. Consider, for example,
$I_1 = (x^5, y^5, z^5, xyz^2, xy^2z, x^2yz)$ and $I_2 = (x^5, y^5, z^5, xyz)$. Then $T_6 (I_1) = T_6 (I_2)$, and $\mo_6(I_1) = 3$ but 
$\mo_6(I_2) = 0$.
However, given a triangular region $T = T_d (I)$, there is a unique largest ideal $J$ that is generated by monomials whose
degrees are bounded above by $d-1$ and that satisfies $T = T_d (J)$. We call $J(T)$ the \emph{monomial ideal of the
triangular region $T$}. Note that $\mo_d (T) = \mo_d (J(T)) \leq \mo_d (I)$, and equality is true if and only if the ideals $I$ and
$J(T)$ are the same in all degrees less than $d$.

\begin{remark}
     \label{rem:puncture coeff}
If a monomial subregion $T$  of  $\mathcal{T}_d$   has no overlapping punctures, then  $\mo_d (T)$ is equal to the number of downwards-pointing unit triangles in $T$ minus the number of upward-pointing unit triangles in $T$.
\end{remark}

Perfectly-punctured regions admit a numerical tileability criterion.

\begin{corollary} \label{cor:pp-tileable}
    Let $T = T_d(I)$ be a triangular region.  Then any two of the following conditions imply the third:
    \begin{enumerate}
        \item $T$ is perfectly-punctured;
        \item $T$ has no over-punctured monomial subregions; and
        \item $T$ is tileable.
    \end{enumerate}
\end{corollary}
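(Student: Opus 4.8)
The plan is to reduce everything to the tileability criterion of Theorem~\ref{thm:tileable}, which governs \emph{balanced} regions, by making precise the relationship between the combinatorial balance of a region and its over-puncturing coefficient. For any triangular region $W$ (of some side length $e$) let $\delta(W)$ denote the number of its $\dntri$'s minus the number of its $\uptri$'s, so that $W$ is $\dntri$-heavy precisely when $\delta(W) > 0$ and balanced precisely when $\delta(W) = 0$. The technical heart of the argument is the inequality
\[
    \delta(W) \le \mo(W),
\]
valid for every triangular region $W$, together with the sharpening that equality holds \emph{if and only if} no two punctures of $W$ overlap. The implication ``no overlap $\Rightarrow$ equality'' is exactly Remark~\ref{rem:puncture coeff}, so the content is the inequality itself and its strictness under overlap.

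To prove this I would write $W = \mathcal{T}_e \setminus P$, where $P$ is the union of the (intrinsic) punctures of $W$; a direct count gives $\delta(\mathcal{T}_e) = \binom{e}{2} - \binom{e+1}{2} = -e$, whence $\delta(W) = -e - \delta(P)$. The crucial structural fact is that the pairwise intersection of the punctures of $\mu$ and $\mu'$ is again a puncture, namely that of $\lcm(\mu,\mu')$; hence $P$, and each of its intersection terms, is a union of upward triangles. I would first show that any nonempty union $Q$ of upward triangles satisfies $\delta(Q) < 0$: sending each $\dntri$ to the $\uptri$ directly above it (sharing its horizontal edge) is an injection of the $\dntri$'s of $Q$ into its $\uptri$'s, and the $\uptri$ of $Q$ with the lowest base is not in the image, yielding one surplus $\uptri$. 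Feeding this (and the fact that $\delta$ of a single puncture of side $s$ equals $-s$) into an induction on the number of punctures gives $-m \le \delta(P) \le 0$, where $m$ is the sum of the puncture side lengths, and therefore $\delta(W) = -e - \delta(P) \le m - e = \mo(W)$; moreover the correction is strictly positive as soon as some puncture meets the union of the earlier ones in a triangle of positive side, i.e.\ as soon as two punctures overlap.

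With this in hand, two of the three implications are essentially formal. Since $\delta(U) \le \mo(U)$ for every monomial subregion $U$, condition (2) forces $\delta(U) \le 0$ for all $U$, that is, $T$ has no $\dntri$-heavy monomial subregion; and a tileable region is balanced, so $\delta(T) = 0$. Thus (2)\,\&\,(3) give $0 = \delta(T) \le \mo(T) \le 0$, the last inequality because $T$ is a monomial subregion of itself, whence $\mo(T) = 0$, which is~(1). For (1)\,\&\,(3): balancedness and $\mo(T) = 0$ force equality in the key inequality, hence no punctures overlap; then $\mo(U) = \delta(U)$ for every subregion $U$ by Remark~\ref{rem:puncture coeff}, while Theorem~\ref{thm:tileable} applied to the balanced, tileable $T$ gives $\delta(U) \le 0$, so every $\mo(U) \le 0$, which is~(2).

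The implication (1)\,\&\,(2)\,$\Rightarrow$\,(3) is where the real work lies, and it is the main obstacle. By (2) and the inequality there is no $\dntri$-heavy monomial subregion, so by Theorem~\ref{thm:tileable} it suffices to prove that $T$ is balanced; since $\delta(T) \le \mo(T) = 0$, this reduces to ruling out overlapping punctures. I would argue by contradiction: suppose the punctures of two distinct minimal generators $\mu, \mu'$ of $J(T)$ overlap, and pass to the monomial subregion $U_0$ associated to $g = \gcd(\mu,\mu')$, the smallest upward triangle containing both punctures. Dividing by $g$ preserves puncture side lengths, and $\mu/g, \mu'/g \in J(U_0)$ are coprime, so they are divisible by two \emph{distinct} minimal generators of $J(U_0)$ --- a common one would force $1 \in J(U_0)$, making $U_0$ empty, which is impossible because $\mu,\mu'$ are minimal in $J(T)$. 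Summing the corresponding side lengths and using $\deg\mu + \deg\mu' = \deg\lcm(\mu,\mu') + \deg g$ gives
\[
    \mo(U_0) \ge (d - \deg\mu) + (d - \deg\mu') - (d - \deg g) = d - \deg\lcm(\mu,\mu') \ge 1,
\]
so $U_0$ is over-punctured, contradicting~(2). Hence no punctures overlap, $T$ is balanced, and Theorem~\ref{thm:tileable} completes the proof. The points demanding the most care are the strictness of the key inequality under overlap and the coprimality/distinctness step that manufactures the over-punctured subregion $U_0$.
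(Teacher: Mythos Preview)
Your proof is correct and follows a different route from the paper's. You make the inequality $\delta(W) \le \mo(W)$ (with equality exactly when no punctures overlap) into a standalone lemma and derive all three implications uniformly from it together with Theorem~\ref{thm:tileable}; the paper never isolates this inequality, instead using Lemma~\ref{lem:replace-tileable} to show that overlap forces over-puncturing (by excising a tileable subregion $U$ containing two overlapping punctures and comparing $\mo(T)$ with $\mo(T\setminus U)$), and handling $(\mathrm{i})\,\&\,(\mathrm{ii}) \Rightarrow (\mathrm{iii})$ with a one-line appeal to Theorem~\ref{thm:tileable}. Your approach is more self-contained---it never needs Lemma~\ref{lem:replace-tileable}---and your explicit $U_0$ construction makes the last implication airtight: the paper's invocation of Theorem~\ref{thm:tileable} there tacitly requires $T$ to be balanced, which under (i) and (ii) amounts to ``no overlapping punctures,'' precisely what your $U_0$ argument establishes. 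The paper's version, on the other hand, is shorter and recycles Lemma~\ref{lem:replace-tileable}, machinery it needs elsewhere anyway.
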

\begin{proof}
Suppose $T$ is tileable. Then $T$ has no $\dntri$-heavy monomial subregions by Theorem \ref{thm:tileable}.  Thus,  every monomial
    subregion of $T$ is not over-punctured if and only if no punctures of $T$ overlap. Hence (ii) implies (i) by Remark \ref{rem:puncture coeff} because $T$ is balanced. For the converse it is enough to show: If some punctures of $T$ overlap, then $T$ is over-punctured. Indeed, if no punctures overlap, then $T$ is perfectly punctured because $T$ is balanced. So assume two punctures of $T$ overlap. Then the smallest monomial subregion $U$ of $T$ containing these two punctures does not overlap with any other puncture of $T$ and is uniquely tileable. Hence $T \setminus U$ is tileable by Lemma \ref{lem:replace-tileable}, and thus $0 \leq \mo_d (T \setminus U) < \mo_d (T)$, as desired.

If $T$ is non-tileable, then $T$ has a $\dntri$-heavy monomial subregion.   Since every $\dntri$-heavy monomial subregion is
    also over-punctured, it follows that $T$ has an over-punctured monomial subregion.
\end{proof}

Any subregion $T \subset \mathcal{T}_d$ can be associated to a bipartite planar graph $G$ that is an induced subgraph
of a honeycomb graph (see \cite{CN-resolutions}).  We are interested in the  bi-adjacency matrix $Z(T)$ of $G$. This is a zero-one matrix whose determinant enumerates signed lozenge tilings (see \cite[Theorem 3.5]{CN-resolutions}). If $T = T_d(I)$ for some monomial ideal $I$, then $Z(T)$ admits an alternative description. Indeed, consider the
    multiplication map $\times(x+y+z): [R/I]_{d-2} \rightarrow [R/I]_{d-1}$. Let $M(d)$ be the matrix to this linear map
    with respect to the monomial bases of $[R/I]_{d-2}$ and $[R/I]_{d-1}$ in reverse-lexicographic order. Then the
    transpose of $M(d)$ is the bi-adjacency matrix $Z(T_d (I))$ (see \cite[Proposition 4.5]{CN-small-type}). Here we need only a special case of these results.

\begin{proposition}
   \label{prop:pm-det}
Assume $T = T_d (I) \subset \mathcal{T}_d$ is a non-empty balanced subregion.  If $\det Z(T) \in \ZZ$ is not zero, then $T$ is tileable.
\end{proposition}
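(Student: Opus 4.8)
The plan is to read off the result directly from the combinatorial meaning of $\det Z(T)$, with essentially no computation. Since $Z(T)$ is by construction the bi-adjacency matrix of the bipartite honeycomb subgraph $G$ attached to $T$, and since $T$ is balanced, the two color classes of $G$ --- corresponding to the downward- and upward-pointing unit triangles of $T$ --- have the same cardinality $n$. Hence $Z(T)$ is a genuine square $n \times n$ zero-one matrix, so that $\det Z(T)$ is defined, and the non-emptiness of $T$ guarantees $n \ge 1$.

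First I would expand the determinant by the Leibniz formula,
\[
    \det Z(T) = \sum_{\sigma \in \PS_n} \sgn(\sigma) \prod_{i=1}^n Z(T)_{i,\sigma(i)}.
\]
Because every entry of $Z(T)$ is $0$ or $1$, a summand is nonzero exactly when $Z(T)_{i,\sigma(i)} = 1$ for every $i$, and in that case its absolute value is $1$. A permutation $\sigma$ with this property is precisely a perfect matching of $G$, pairing each downward-pointing triangle with an adjacent upward-pointing one. Under the standard correspondence between perfect matchings of the honeycomb subgraph and lozenge tilings of the region --- each matched adjacent pair being a single lozenge --- such a $\sigma$ is the same datum as a lozenge tiling of $T$. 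In particular $\det Z(T) = \sum_{t} \sgn(t)$, the signed count of all tilings $t$ of $T$, which is exactly the content of \cite[Theorem 3.5]{CN-resolutions}.

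The conclusion is then immediate: if $\det Z(T) \neq 0$, the right-hand sum is not the empty sum, so at least one summand is nonzero; equivalently $\per Z(T) \geq 1$. Thus $T$ admits at least one lozenge tiling and is therefore tileable.

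The only point requiring care --- and it is bookkeeping rather than a genuine obstacle --- is the identification of the nonzero terms of the Leibniz expansion with lozenge tilings, which rests entirely on the explicit construction of $G$ and of $Z(T)$ recalled above. I would emphasize that only this one direction holds: sign cancellation can force $\det Z(T) = 0$ even when $T$ is tileable, since it is the unsigned permanent, not the determinant, that counts tilings. Accordingly the converse of the proposition is false and is not asserted.
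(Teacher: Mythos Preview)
Your proof is correct and follows essentially the same route as the paper. The paper's argument is terser: it notes that balancedness forces $Z(T)$ to be square (via \cite[Proposition~4.5]{CN-small-type}) and then invokes \cite[Theorem~3.5]{CN-resolutions} directly, whereas you have unpacked the content of that theorem by writing out the Leibniz expansion and the matching--tiling correspondence explicitly. The underlying idea is identical.
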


\begin{proof}
Balancedness of $T$ is equivalent to $\dim_K [R/I]_{d-2} = \dim_K [R/I]_{d-1}$. It follows that $Z(T)$ is a square matrix by \cite[Proposition 4.5]{CN-small-type}. Now \cite[Theorem 3.5]{CN-resolutions} gives the assertion.
\end{proof}

We conclude this section with a criterion that guarantees non-vanishing of $\det Z(T)$. To this end
we recursively define a puncture of $T \subset \mathcal{T}_d$ to be
a \emph{non-floating} puncture if it  touches the boundary of $ \mathcal{T}_d$ or if it overlaps or touches a non-floating puncture of $T$. Otherwise we call a puncture
a \emph{floating} puncture. For example, the region $T$ in Figure \ref{fig:triregion-intro} has three non-floating punctures (in the corners) and three floating punctures, two of them are overlapping and have side length two.

\begin{proposition}{\cite[Corollary 4.7]{CN-resolutions}}
     \label{prop:same-sign}
    Let $T$ be a tileable triangular region, and suppose all floating punctures of $T$ have an even side length. Then
    $\per{Z(T)} = |\det{Z(T)}| \neq 0$.
\end{proposition}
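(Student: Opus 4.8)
The plan is to prove the stronger statement that every lozenge tiling of $T$ contributes the \emph{same} sign to $\det Z(T)$, from which the equality $|\det Z(T)| = \per Z(T)$ is immediate. Since $T$ is balanced, $Z(T)$ is a square $0$--$1$ matrix by \cite[Proposition 4.5]{CN-small-type}, its permanent counts the tilings of $T$, and by \cite[Theorem 3.5]{CN-resolutions} its determinant is the signed count $\sum_M \sgn(\sigma_M)$, where $M$ ranges over the tilings and $\sigma_M$ is the permutation sending each down-triangle to the up-triangle with which it forms a lozenge. Because $T$ is tileable this sum is nonempty, so once all the signs agree we obtain $|\det Z(T)| = \per Z(T) \neq 0$.

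First I would fix two tilings $M_1, M_2$ and superimpose them. In the symmetric difference of the two perfect matchings, the triangles matched identically are fixed points of $\sigma_1 \sigma_2^{-1}$, while the remaining triangles split into disjoint alternating closed loops. A loop passing through $\ell(L)$ down-triangles is an $\ell(L)$-cycle of $\sigma_1\sigma_2^{-1}$, so by multiplicativity of the sign, $\sgn(\sigma_{M_1})\,\sgn(\sigma_{M_2}) = \sgn(\sigma_1 \sigma_2^{-1}) = \prod_L (-1)^{\ell(L)-1}$, the product ranging over these loops. It therefore suffices to show that each loop contributes $+1$, i.e. that $\ell(L)$ is odd.

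The heart of the matter is a parity computation tying $\ell(L)$ to the punctures that $L$ encloses. I would first note that an interior loop can enclose only \emph{floating} punctures: a non-floating puncture is, by the recursive definition, joined through a chain of overlapping or touching punctures to the boundary of $\mathcal{T}_d$, hence connected to the unbounded region, so no cycle of present triangles can separate it from the outside. Viewing $L$ as a simple cycle in the honeycomb graph $G$ bounding a region whose only holes are the enclosed punctures $P_1,\dots,P_t$ of side lengths $s_1,\dots,s_t$, I would then apply Euler's formula $V-E+F=2$ to the plane subgraph consisting of $L$ and everything inside it. The bounded faces are the unit hexagons (say $H$ of them) together with the $t$ puncture-faces, and a direct count shows that the face of a side-$s$ puncture has boundary length $6(s+1)$. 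Substituting $V = 2\ell(L) + I$, where the $I$ interior vertices split evenly into down- and up-triangles since the interior lozenges match them among themselves, together with $2E = 6H + 2\ell(L) + \sum_j 6(s_j+1)$ and $F = H + t + 1$, and reducing modulo $2$, yields $\ell(L) - 1 \equiv \sum_j s_j \pmod{2}$. Hence $(-1)^{\ell(L)-1} = (-1)^{\sum_j s_j} = +1$ as soon as every enclosed (necessarily floating) puncture has even side length. Thus $\sgn(\sigma_{M_1}) = \sgn(\sigma_{M_2})$ for all pairs of tilings, and the proposition follows.

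I expect the main obstacle to be the Euler bookkeeping, and in particular the computation of the puncture-face boundary length $6(s+1)$; the cleanest route is to count, along each side of the puncture, the $3s$ edge-adjacent down-triangles and the $3(s-1)$ up-triangles sitting at the side-interior boundary vertices, and then to add the $3$ extra down- and $6$ extra up-triangles contributed at the three corners, giving $3s+3$ triangles of each orientation on the face. Secondary technical points are the degenerate configurations where $L$ runs alongside a puncture or fails to be a simple cycle, or where the subgraph inside $L$ is disconnected; these should be dealt with by decomposing into simple loops and treating each separately. The separation statement for non-floating punctures likewise requires a careful planar-topology justification, but both issues reduce to elementary combinatorial facts about $\mathcal{T}_d$.
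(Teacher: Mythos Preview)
The paper does not give a proof here; the statement is quoted from \cite[Corollary~4.7]{CN-resolutions}. Your proposal is thus an independent proof of the cited result, and the sign-coherence strategy you outline---superimpose two tilings, reduce to showing each alternating cycle $L$ has $\ell(L)$ odd, and extract this parity from Euler's formula on the planar subgraph bounded by $L$---is exactly the standard route and almost certainly what the reference does. Your separation argument for non-floating punctures and your list of delicate points are on target.

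There is, however, one genuine gap that your ``degenerate configurations'' do not cover. The perimeter formula $6(s_j+1)$, and hence the congruence $\ell(L)-1\equiv\sum_j s_j\pmod 2$, tacitly assume that the floating punctures enclosed by $L$ are pairwise non-overlapping, so that each contributes a separate face of $G$. The paper's definition explicitly allows overlapping floating punctures (see the example immediately preceding the proposition). When two even-side floating punctures overlap in an odd-side triangle, their union removes an \emph{odd} number of unit triangles; redoing your Euler count for the single merged face yields $p/2\equiv d+u+1\pmod 2$ and hence $\ell(L)-1$ congruent to that odd total rather than to $\sum_j s_j$, so the parity step as written breaks down. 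In such a configuration the honeycomb graph of $T$ acquires forced (degree-one) vertices along the merged hole, and one must use those forcings to argue that no superposition cycle can actually encircle it. That is a separate argument, not a loop-decomposition bookkeeping variant, and you should make it explicit.
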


\section{Combinatorial interpretations of some algebraic properties}\label{sec:alg}

In this section, we use the connection to triangular regions to reinterpret some algebraic properties.

\subsection{Stability of syzygy bundles} \label{sub:syz}~

Throughout  this subsection, we assume the characteristic of $K$ is zero.

Let $I$ be an Artinian ideal of $S = K[x_1, \ldots, x_n]$ that is minimally generated by forms $f_1, \ldots, f_m$.
The \emph{syzygy module} of $I$ is the graded module $\syz{I}$ that fits into the exact sequence
\[
    0 \rightarrow \syz{I} \rightarrow \bigoplus_{i=1}^{m}S(-\deg f_i) \rightarrow I \rightarrow 0.
\]
Its sheafification $\widetilde\syz{I}$ is a vector bundle on $\PP^{n-1}$, called the \emph{syzygy bundle} of $I$. It has rank $m-1$.

Semistability is an important property of a vector bundle.  Let $E$ be a vector bundle on projective space.  The \emph{slope}
of $E$ is defined as $\mu(E) := \frac{c_1(E)}{rk(E)}$.  Furthermore, $E$ is said to be \emph{semistable}
if the inequality $\mu(F) \leq \mu(E)$ holds for every coherent subsheaf $F \subset E$.  If the inequality is always strict, then
$E$ is said to be \emph{stable}.

Brenner established a beautiful characterization of the semistability of syzygy bundles to monomial ideals.
Since we only consider monomial ideals in this work, the following may be taken as the definition of (semi)stability herein.

\begin{theorem}{\cite[Proposition~2.2 \& Corollary~6.4]{Br}} \label{thm:stable-syz}
    Let $I$ be an Artinian ideal in $K[x_1, \ldots, x_n]$ that is minimally generated by monomials $g_1, \ldots, g_m$, where $K$ is a field of
    characteristic zero. Then $I$ has a semistable syzygy bundle if and only if, for every proper subset $J$ of $\{1, \ldots, m\}$ with at
    least two elements, the inequality
    \[
        \frac{d_J - \displaystyle\sum_{j \in J} \deg g_j}{|J|-1} \leq \frac{-\displaystyle\sum_{i=1}^{m} \deg g_i}{m-1}
    \]
    holds, where $d_J$ is the degree of the greatest common divisor of the $g_j$ with $j \in J$.  Further, $I$ has a stable
    syzygy bundle if and only if the above inequality is always strict.
\end{theorem}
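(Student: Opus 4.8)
The plan is to exhibit, for each admissible subset $J$ of the generators, a subsheaf of the syzygy bundle whose slope is exactly the left-hand side of the displayed inequality, and then to reduce the (semi)stability test to these subsheaves by exploiting the torus-equivariant structure of the bundle. I would first record the slope of $E := \widetilde{\syz I}$. Since $I$ is Artinian, it agrees with $S$ in all large degrees, so its sheafification is $\SO_{\PP^{n-1}}$ and the defining sequence sheafifies to
\[
0 \to E \to \bigoplus_{i=1}^m \SO(-\deg g_i) \xrightarrow{\phi} \SO_{\PP^{n-1}} \to 0,
\]
where $\phi$ sends the $i$-th basis section to $g_i$. Hence $rk(E) = m-1$ and $c_1(E) = -\sum_{i=1}^m \deg g_i$, giving $\mu(E) = \dfrac{-\sum_{i=1}^m \deg g_i}{m-1}$, the right-hand side of the inequality.

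Next, for a subset $J \subseteq \{1,\dots,m\}$ with $|J| \ge 2$ I would form $F_J := \ker(\phi|_J)$, where $\phi|_J \colon \bigoplus_{j \in J} \SO(-\deg g_j) \to \SO_{\PP^{n-1}}$ is the restriction of $\phi$; this is the sheafified syzygy module of the subcollection $\{g_j : j \in J\}$ and is canonically a subsheaf of $E$. The image of $\phi|_J$ is the ideal sheaf generated by the $g_j$, $j \in J$. Writing $h = \gcd(g_j : j \in J)$ with $\deg h = d_J$, the quotients $g_j/h$ have no common factor and so cut out a locus of codimension at least two; thus $\operatorname{im}(\phi|_J)$ agrees with $\SO(-d_J)$ away from codimension $\ge 2$, and in particular has first Chern class $-d_J$. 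From the sequence $0 \to F_J \to \bigoplus_{j\in J}\SO(-\deg g_j) \to \operatorname{im}(\phi|_J) \to 0$ I then read off $rk(F_J) = |J|-1$ and $c_1(F_J) = d_J - \sum_{j \in J}\deg g_j$, so that $\mu(F_J)$ is precisely the left-hand side. Since each $F_J$ is a genuine subsheaf of $E$, semistability (resp.\ stability) of $E$ forces the weak (resp.\ strict) inequality $\mu(F_J) \le \mu(E)$; this yields the ``only if'' direction at once.

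The substance of the theorem is the converse, and this is where I expect the main difficulty. The syzygy bundle of a monomial ideal is equivariant for the action of the torus $(K^{\ast})^n$ on $\PP^{n-1}$, since $\phi$ is defined by monomials. Because the maximal destabilizing subsheaf in the Harder--Narasimhan filtration is canonical, it is preserved by every automorphism of $E$ and in particular is torus-invariant; the same applies to the canonical socle subsheaf that governs strict stability when $E$ is merely semistable. The crux is then a combinatorial classification showing that every torus-invariant saturated subsheaf of $E$ is dominated, in slope, by one of the $F_J$ — concretely, that any invariant destabilizing (or equal-slope) subsheaf must match some subset $J$ together with its gcd datum $d_J$. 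Granting this, if all the displayed inequalities hold then no invariant subsheaf can destabilize, hence none can, and $E$ is semistable; replacing ``$\le$'' by ``$<$'' throughout gives the statement for stability. Carrying out this classification of invariant reflexive subsheaves, and identifying each with a subset and its $d_J$, is the technical heart of the argument and the step I would expect to consume most of the work.
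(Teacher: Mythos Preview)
The paper does not give its own proof of this theorem: it is quoted verbatim as a result of Brenner~\cite{Br} and used as a black box. So there is nothing in the paper to compare your proposal against.

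That said, your outline is essentially Brenner's original argument. The slope computations for $E$ and for the subsheaves $F_J$ are correct as you present them, and the ``only if'' direction really is immediate from the existence of the $F_J$. For the converse, you have identified the right mechanism: the maximal destabilizing subsheaf is unique, hence torus-invariant, and one must then show that among torus-invariant reflexive subsheaves the maximal slope is already realized by some $F_J$. In Brenner's paper this is precisely the content of his Proposition~2.2 (the construction of the $F_J$ and the slope formula) together with Corollary~6.4 (the reduction of the semistability test to these particular subsheaves via the equivariant structure). Your sketch correctly flags the classification of invariant subsheaves as the nontrivial step and does not attempt to carry it out; that is honest, but be aware that this is where all the work lies, and the argument there is more delicate than a one-line appeal to invariance.
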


We use Brenner's criterion to rephrase (semi)stability in the case of a monomial ideal of $K[x,y,z]$ in terms of the
over-puncturing coefficients of ideals.  Note, in particular, that $\mo_d (I)= \sum_{i=1}^{m} (d - \deg g_i) - d$.

\begin{corollary} \label{cor:T-stable}
    Let $I$ be an Artinian ideal in $R = K[x,y,z]$ that is minimally generated by monomials $g_1, \ldots, g_m$ of degree at most $d$.
    For every proper subset $J$ of $\{1, \ldots, m\}$ with at least two elements, let $I_J$ be the monomial ideal that is
    generated by $\{ g_j / g_J \st j \in J\}$, where $g_J = \gcd\{g_j \st j \in J \}$ has degree $d_J$.

    Then $I$ has a semistable syzygy bundle if and only if, for every proper subset $J$ of $\{1, \ldots, m\}$ with at
    least two elements, the inequality
    \[
        \frac{\mo_{d-d_J} (I_J)}{|J| - 1} \leq \frac{\mo_{d} (I)}{m-1}
    \]
    holds.  Furthermore, $I$ has a stable syzygy bundle if and only if the above inequality is always strict.
\end{corollary}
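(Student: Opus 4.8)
The plan is to deduce the statement directly from Brenner's criterion (Theorem~\ref{thm:stable-syz}) by rewriting each side of its displayed inequality in terms of over-puncturing coefficients, using the identity $\mo_d(I) = \sum_{i=1}^m (d - \deg g_i) - d$ recorded just before the corollary. Because this will be an equivalence of inequalities, the semistable and stable cases can be handled simultaneously, with strictness preserved at every step.

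First I would treat the right-hand side. Rearranging the identity gives $\sum_{i=1}^m \deg g_i = (m-1)d - \mo_d(I)$, whence $\frac{-\sum_{i=1}^m \deg g_i}{m-1} = \frac{\mo_d(I)}{m-1} - d$.

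Next I would establish the analogous formula for the left-hand side, which is the step demanding the most care. The subtlety is that the over-puncturing coefficient is defined via \emph{minimal} generators, so before applying the identity to $I_J$ I must verify that $\{g_j / g_J \st j \in J\}$ really is a minimal generating set. This follows from the minimality of $g_1, \dots, g_m$: if $g_j/g_J$ divided $g_k/g_J$ for distinct $j,k \in J$, then $g_j$ would divide $g_k$, contradicting that the $g_i$ are minimal generators of $I$; moreover the quotients are pairwise distinct since the $g_j$ are. Hence $I_J$ has exactly $|J|$ minimal generators, each of degree $\deg g_j - d_J \le d - d_J$, so the identity may legitimately be applied in degree $d' = d - d_J$. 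Doing so yields $\mo_{d-d_J}(I_J) = (|J|-1)d + \big(d_J - \sum_{j \in J} \deg g_j\big)$, and therefore $\frac{d_J - \sum_{j \in J} \deg g_j}{|J|-1} = \frac{\mo_{d-d_J}(I_J)}{|J|-1} - d$.

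Finally, substituting both expressions into Brenner's inequality, the common summand $-d$ cancels on the two sides and leaves exactly the asserted inequality; the same cancellation shows that strict inequalities correspond, giving the stability statement. The only genuine obstacle is the bookkeeping of the previous paragraph—confirming that the quotient generators remain minimal and counting them correctly—after which the corollary reduces to a routine substitution.
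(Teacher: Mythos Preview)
Your proof is correct and follows essentially the same approach as the paper: the paper's proof is a single line computing $\mo_{d-d_J}(I_J) = d(|J|-1) + d_J - \sum_{j\in J}\deg g_j$ and declaring that the result then follows from Theorem~\ref{thm:stable-syz}, which is exactly your substitution argument spelled out in full. Your additional verification that $\{g_j/g_J : j\in J\}$ is a \emph{minimal} generating set of $I_J$ is a point the paper silently passes over, so your write-up is in fact more careful than the original.
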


\begin{proof}
  Since $\mo_{d-d_J} (I_J)= d (|J| -1) + d_J -  \sum_{j \in J} \deg g_i$,   this follows immediately from Theorem~\ref{thm:stable-syz}.
\end{proof}

In order to apply this result we slightly extend the concept of a triangular region $T_d (I)$. Label the vertices in $\mathcal{T}_d$ by monomials of degree $d$ such that the label of
each unit triangle is the greatest common divisor of its vertex labels.  Then a minimal monomial generator of $I$ with
degree $d$ corresponds to a vertex of $\mathcal{T}_d$ that is removed in $T_d (I)$. We consider this removed vertex
as a puncture of side length zero. Observe that this is in line with our general definition of the side length of a puncture.

Using Corollary~\ref{cor:pp-tileable}, we see that semistability is strongly related to tileability of a region.

\begin{theorem} \label{thm:tileable-semistable}
    Let $I$ be an Artinian ideal in $R = K[x,y,z]$ generated by monomials whose degrees are bounded above by $d$,
    and let $T = T_d(I)$.  If $T$ is non-empty, then any two of the following conditions imply the third:
    \begin{enumerate}
        \item $I$ is perfectly-punctured;
        \item $T$ is tileable; and
        \item $\widetilde\syz{I}$ is semistable.
    \end{enumerate}
\end{theorem}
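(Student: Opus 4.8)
The plan is to translate semistability into a purely combinatorial statement about monomial subregions and then feed it into Corollary~\ref{cor:pp-tileable}. By Corollary~\ref{cor:T-stable}, $\widetilde\syz{I}$ is semistable if and only if
\[
    (m-1)\,\mo_{d-d_J}(I_J) \;\le\; (|J|-1)\,\mo_d(I)
\]
for every proper subset $J \subseteq \{1,\dots,m\}$ with $|J|\ge 2$, where $g_J = \gcd\{g_j : j \in J\}$ has degree $d_J$. First I would set up a dictionary between such subsets and monomial subregions: writing $s_i = d - \deg g_i$ for the side length of the $i$-th puncture, one has $\mo_{d-d_J}(I_J) = \sum_{j\in J} s_j - (d - d_J)$, and for a \emph{saturated} subset (one containing every generator divisible by $g_J$) this equals the over-puncturing coefficient of the monomial subregion of $T$ associated to $g_J$. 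Enlarging $J$ to its saturation never decreases $\mo_{d-d_J}(I_J)$, and shrinking the ambient triangle to the one cut out by $g_J$ does not decrease it either, so the extreme values of the left-hand side are attained at monomial subregions. In particular $T$ has an over-punctured monomial subregion if and only if some proper $J$ with $|J|\ge 2$ satisfies $\mo_{d-d_J}(I_J) > 0$.

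Two of the three implications then follow by specializing to $\mo_d(I)=0$. If $I$ is perfectly-punctured, the displayed inequality reads $\mo_{d-d_J}(I_J)\le 0$ for all admissible $J$, which by the dictionary is precisely the condition that $T$ has no over-punctured monomial subregion; so under~(i), semistability is equivalent to the absence of over-punctured monomial subregions. For (i)$+$(iii)$\Rightarrow$(ii), semistability forces each pair of punctures to satisfy $\mo_{d-d_{ab}}(I_{\{a,b\}})\le 0$, i.e.\ no two punctures overlap, whence $I$ and $J(T)$ agree below degree $d$ and $\mo_d(T)=0$; Corollary~\ref{cor:pp-tileable}, via ``no over-punctured monomial subregions $\Rightarrow$ tileable,'' then gives tileability. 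For (i)$+$(ii)$\Rightarrow$(iii), tileability makes $T$ balanced, and balancedness together with $\mo_d(I)=0$ forces the punctures of $I$ to be disjoint, so $\mo_d(T)=\mo_d(I)=0$; Corollary~\ref{cor:pp-tileable} now yields that $T$ has no over-punctured monomial subregion, and the dictionary turns this back into the semistability inequalities.

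The remaining implication (ii)$+$(iii)$\Rightarrow$(i) is the crux, and I expect it to be the main obstacle. I would argue the contrapositive: assuming $T$ tileable, I would show that $\mo_d(I)>0$ destabilizes $\widetilde\syz{I}$. Tileability makes $T$ balanced, whence $\mo_d(I)\ge 0$, and $\mo_d(I)>0$ forces some punctures of $I$ to overlap. Localizing at an overlapping pair $\{a,b\}$ gives a proper subset with $\mo_{d-d_{ab}}(I_{\{a,b\}})\ge 1$; when $\mo_d(I) < m-1$ this normalized over-puncturing already exceeds the global slope $\mo_d(I)/(m-1)$, contradicting semistability. The genuine difficulty is the regime $\mo_d(I)\ge m-1$, where a single overlapping pair need not beat the global slope: there one must exhibit a larger monomial subregion whose over-puncturing is concentrated enough---morally an $\mathfrak{m}$-power--like cluster forced by the overlaps, as in the example $(x^5,y^5,z^5,xyz^2,xy^2z,x^2yz)$---to strictly exceed the global slope. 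I would attack this by an extremal argument, choosing a monomial subregion that maximizes $\mo_{d-d_J}(I_J)/(|J|-1)$ and peeling off uniquely tileable blocks via Lemma~\ref{lem:replace-tileable} to lower the global coefficient while retaining a dense core; the key point to secure is that the geometry of a tileable, over-punctured region always supplies such a super-slope cluster.
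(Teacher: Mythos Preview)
Your treatment of the two implications under hypothesis~(i) is correct and essentially matches the paper: both you and the paper set $\mo_d(I)=0$, reduce semistability via Corollary~\ref{cor:T-stable} to the vanishing of all $\mo_{d-d_J}(I_J)$, identify this with the absence of over-punctured monomial subregions, and invoke Corollary~\ref{cor:pp-tileable}. Your dictionary between saturated subsets and monomial subregions is the right translation.

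The genuine gap is in (ii)$+$(iii)$\Rightarrow$(i). You correctly isolate the difficult regime $\mo_d(I)\ge m-1$, but your sketch (``choose a subregion maximizing the normalized over-puncturing and peel off uniquely tileable blocks'') does not supply the missing idea, and it is not clear how such peeling would ever certify that the maximum \emph{exceeds} the global slope. The paper's argument is different and quite clean. Starting from a tileable, over-punctured $T$, one iteratively replaces each overlapping pair of punctures by the single puncture corresponding to their gcd, exactly as you suggest via Lemma~\ref{lem:replace-tileable}; this terminates at a perfectly-punctured region $T'=T_d(J)$ for some monomial ideal $J\supseteq I$. Now comes the step your outline lacks: partition the minimal generators $g_1,\dots,g_m$ of $I$ into clusters $F_1,\dots,F_n$ according to which minimal generator $f_j$ of $J$ they collapsed to (so $f_j=\gcd F_j$), and set $\mo_j=\mo_{d-\deg f_j}\bigl((F_j/f_j)\bigr)\ge 0$. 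A direct computation gives
\[
    0=\mo_d(J)=\mo_d(I)-\sum_{j=1}^n \mo_j,\qquad \sum_{j=1}^n |F_j|=m.
\]
Averaging (pigeonhole) now forces some index $k$ with $m\,\mo_k\ge |F_k|\,\mo_d(I)$; combining this with $\mo_k\le \mo_d(I)$ (which follows from $\mo_d(I)=\sum_j\mo_j$ and $\mo_j\ge 0$) and with $|F_k|<m$ (since $n>1$) yields the strict inequality $\mo_k/(|F_k|-1)>\mo_d(I)/(m-1)$, so $F_k$ is a destabilizing subset by Corollary~\ref{cor:T-stable}. The case where $T'$ is empty has to be handled separately: then two relatively prime generators $g_1,g_2$ have punctures filling $\mathcal{T}_d$, all other generators of $I$ have degree $d$, and $\{g_1,g_2\}$ destabilizes since $\mo_d((g_1,g_2))=\mo_d(I)$ while $|J|-1=1<m-1$.

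In short, your ``peeling'' intuition is the right first move, but the decisive second move is the averaging argument over the resulting partition, not an extremal search over monomial subregions.
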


\begin{proof}
Assume $I$ is perfectly punctured, that is, $\mo_d(I) = 0$.  We will show that $T$ is tileable if and only
    if $\widetilde\syz{I}$ is semistable.

    If $T$ is tileable, then $\mo_d(T) = 0$ which implies $J(T) = I$.  Hence no punctures of $T$ overlap.  This
    further implies $I_A = J(T_{I_A})$ for any subset $A$ of the generators of $I$.  Thus $\mo_{d-d_A}(I_A) = \mo_{d-d_A}(T_{I_A}) \leq 0$,
    since no punctures overlap and  every subregion is not over-punctured by Corollary~\ref{cor:pp-tileable}.  Hence,  $\widetilde{\syz} I$ is semistable by Corollary~\ref{cor:T-stable}.

    If $\widetilde{\syz} I$ is semistable, then $\mo_{d-d_A}(I_A) \le 0$ holds for any subset $A$ of the generators of $I$.
    This implies, in particular, that no punctures of $T$ overlap.  Hence $I = J(T)$ and so $\mo_d(T) = \mo_d(I) = 0$.
    Furthermore, since no pair of punctures overlap, having no over-punctured regions is the same as having no $\dntri$-heavy regions (see Remark \ref{rem:puncture coeff}).      Thus, by Corollary~\ref{cor:pp-tileable}, $T$ is tileable.

    Now assume $I$ is not perfectly-punctured, but $T$ is tileable. We have to show that $\widetilde\syz{I}$ is not semistable. Arguing as in the proof of Corollary \ref{cor:pp-tileable}, we conclude that  $T$ is over-punctured and must have overlapping punctures.
    Consider two such overlapping punctures of $T$. Then the smallest monomial subregion $U$ containing these two punctures does
    not overlap with any other puncture of $T$ with positive side length.
    Hence $T' = T \setminus U$ is tileable  and $0 \leq \mo_{T'} < \mo_d (I)$.  If $T'$ is still
    over-punctured, then we repeat the above replacement procedure until we get a perfectly-punctured monomial subregion of $T$.
    Abusing notation slightly, denote this region by $T'$.  Let $J$ be the largest monomial ideal containing $I$ and with generators
    whose degrees are bounded above by $d$   such that $T' = T_d (J)$. Observe  that $\mo_d (J) = \mo_{d}  (T') = 0$.

    Notice that a single replacement step above amounts to replacing the triangular region to an ideal $I'$ by the region
    to the ideal $(I', f)$, where $f$ is a greatest common divisor of the minimal generators of $I'$ that correspond to two overlapping punctures. These generators   have
    degrees less than $d$.

    Assume now that $T'$ is empty.  Then $I$ has two relatively prime minimal generators, say $g_1, g_2$, whose corresponding punctures 
    overlap and are not both contained in a proper monomial subregion of $\mathcal{T}_d$. Since $I$ is Artinian it has $m \ge 3$ minimal 
    generators. Moreover, all minimal  generators of $I$ other than $g_1$ and $g_2$ have degree $d$. It follows that 
    $\mo_d((g_1, g_2)) = \mo_d (I)$. Since $m > 2$, Corollary~\ref{cor:T-stable} shows that $\widetilde{\syz}{I}$ is not semistable.

    It remains to consider the case where $T'$ is not empty, i.e., $J$ is a proper ideal of $R$.  Let $g_1, \ldots, g_m$ and
    $f_1, \ldots, f_n$ be the minimal monomial generators of $I$ and $J$, respectively. Partition the generating set of $I$
    into $F_j = \{ g_i \st g_i \mbox{~divides~} f_j\}$. Notice $f_j = \gcd\{F_j\}$. In particular, $n > 1$ as $J$ is a proper
    Artinian ideal.

    Set $\mo_j = \mo_{d - \deg f_j} ((\frac{F_j}{f_j})) =  \sum_{g \in F_j} (d - \deg{g}) - (d - \deg{f_j})$. Observe $\mo_j \geq 0$ as the subregion of $T_d (I)$ associated
    to  $f_j$ is tileable, hence not under-punctured.  Moreover,
    \begin{equation*}
        \begin{split}
            \mo_d (J)  = \sum_{j = 1}^{n}(d - \deg{f_j}) - d & = \sum_{j=1}^{n} \left( \sum_{g \in F_j} (d - \deg{g}) - \mo_j \right) - d \\
                                                            & = \sum_{j=1}^{n} \sum_{g \in F_j} (d - \deg{g}) - d - \sum_{j=1}^{n} \mo_j \\
                                                            & = \mo_d (I) - \sum_{j=1}^{n} \mo_j.
        \end{split}
    \end{equation*}
    As $\mo_d (J)  = 0$, we conclude that $\mo_d (I) = \sum_{j=1}^{n} \mo_j$ and, in particular, $\mo_d (I) \geq \mo_j$ for each $j$.

    Assume $m \cdot \mo_j < \# F_j \cdot \mo_d (I)$ for all $j$.  Then $m \sum_{j=1}^{n}\mo_j < \mo_d (I) \sum_{j=1}^{n} \# F_j = \mo_d (I) \cdot m$.  But this
    implies $m \cdot \mo_d (I) < m \cdot \mo_d (I)$, which is absurd.  Hence, there is some $k$ such that  $m \cdot \mo_k \geq \# F_k \cdot \mo_d (I)$.
    Since $\mo_d (I)\geq \mo_k$ it follows that $\frac{\mo_k}{\# F_k-1} > \frac{\mo_d (I)}{m-1}$.  Indeed, this is immediate if
    $\mo_d (I) > \mo_k$. If $\mo_d (I) = \mo_k$, then it is also true because $\# F_k  < m$. Now Corollary~\ref{cor:T-stable} gives
    that $\widetilde\syz{I}$ is not semistable.
\end{proof}

We get the following criterion when focusing solely on the triangular region. Recall that $J(T)$ denotes the monomial ideal of
a triangular region $T$ as introduced above Remark~\ref{rem:puncture coeff}.

\begin{corollary}  \label{cor:semistability-by-region}
    Let $I$ be an Artinian ideal in $R = K[x,y,z]$ generated by monomials whose degrees are bounded above by $d$,
    and let $T = T_d(I)$.  Assume $T$ is non-empty and tileable.
    \begin{enumerate}
        \item If $I \neq I + J(T)$, then $\widetilde\syz{I}$ is not semistable.
        \item $\widetilde\syz(I + J(T))$ is semistable if and only if  $T$ is perfectly-punctured.
    \end{enumerate}
\end{corollary}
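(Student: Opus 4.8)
The plan is to derive both statements of Corollary~\ref{cor:semistability-by-region} directly from Theorem~\ref{thm:tileable-semistable}, exploiting the fact that $T$ is already assumed tileable together with the interplay between the ideals $I$, $J(T)$, and $I + J(T)$. The key observation is that $T = T_d(I) = T_d(J(T)) = T_d(I + J(T))$, since $J(T)$ is by definition the largest monomial ideal (with generators of degree at most $d-1$) determining the region $T$, and adding $J(T)$ to $I$ cannot remove or add any unit triangles beyond those already missing in $T$. Thus all three ideals share the same tileable region $T$, and the distinction among them is purely a matter of their over-puncturing coefficients.

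For part (i), I would argue by contraposition: suppose $\widetilde\syz{I}$ \emph{is} semistable. Since $T$ is tileable and $\widetilde\syz{I}$ is semistable, Theorem~\ref{thm:tileable-semistable} forces $I$ to be perfectly-punctured, i.e., $\mo_d(I) = 0$. But from the discussion preceding Remark~\ref{rem:puncture coeff} we have $\mo_d(I) = \mo_d(J(T)) \le \mo_d(I)$ with equality precisely when $I$ and $J(T)$ agree in all degrees less than $d$; combined with $\mo_d(T) = \mo_d(J(T)) = 0$, the condition $\mo_d(I) = 0 = \mo_d(J(T))$ shows the punctures of $I$ already fill out $J(T)$, so $I + J(T) = I$. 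This is the contrapositive of the claim. I would need to be careful here that "perfectly-punctured" for $I$ genuinely implies $I = I + J(T)$: the point is that if $\mo_d(I) = \mo_d(J(T)) = 0$ then no punctures overlap (as in the proof of Corollary~\ref{cor:pp-tileable}), forcing $I = J(T)$ in degrees below $d$, whence $I + J(T) = I$.

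For part (ii), set $I' = I + J(T)$. Since $T = T_d(I')$ is non-empty and tileable by hypothesis, I would apply Theorem~\ref{thm:tileable-semistable} directly to $I'$: with condition (ii) of that theorem satisfied, conditions (i) and (iii) become equivalent. That is, $\widetilde\syz{I'}$ is semistable if and only if $I'$ is perfectly-punctured, i.e.\ $\mo_d(I') = 0$. The final step is to identify $\mo_d(I') = \mo_d(J(T)) = \mo_d(T)$, which holds because $I' = I + J(T)$ has the same region $T$ as $J(T)$ and, being at least as large as $J(T)$ while still cutting out exactly $T$, agrees with $J(T)$ in all degrees below $d$ up to over-punctured redundancy; hence $\mo_d(I') = \mo_d(T)$. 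Therefore $\widetilde\syz(I + J(T))$ is semistable if and only if $T$ is perfectly-punctured, as asserted.

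The main obstacle I anticipate is bookkeeping the precise relationship $\mo_d(I + J(T)) = \mo_d(T)$, since $I + J(T)$ may contain redundant generators whose punctures overlap with those of $J(T)$, making its over-puncturing coefficient \emph{as an ideal} potentially larger than that of $J(T)$. The resolution is to recall that $\mo_d(T)$ is defined via $J(T)$ (the canonical minimal-overlap representative), so $\mo_d(T) = \mo_d(J(T))$ by definition, and that Theorem~\ref{thm:tileable-semistable} is phrased in terms of $I'$ being perfectly-punctured as an \emph{ideal}; one must verify that for $I' = I + J(T)$ the ideal-theoretic and region-theoretic over-puncturing coefficients coincide, which follows because adding $J(T)$ saturates the generating set so that $I'$ and $J(T)$ share the same minimal generators in degrees below $d$. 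Making this identification clean and explicit is where the care lies.
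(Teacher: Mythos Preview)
Your proposal is correct and follows essentially the same approach as the paper: both arguments reduce everything to Theorem~\ref{thm:tileable-semistable} via the chain $0 \le \mo_d(T) = \mo_d(J(T)) = \mo_d(I+J(T)) \le \mo_d(I)$, with your part (i) being the contrapositive of the paper's direct argument. Note the typo in your display ``$\mo_d(I) = \mo_d(J(T)) \le \mo_d(I)$'' (the leftmost term should be $\mo_d(T)$), and that the step $\mo_d(T) \ge 0$ deserves a word of justification (it follows from $T$ being balanced, as the paper also asserts); otherwise your bookkeeping concern about $\mo_d(I+J(T)) = \mo_d(T)$ is exactly the right subtlety to flag, and your resolution via the maximality of $J(T)$ is the correct one.
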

\begin{proof}
Note  that $I \ne I + J(T)$ implies $\mo_d (I+J(T)) < \mo_d (I)$.
 Since $T$ is balanced, we get $0 \leq \mo_d (T) = \mo_d (J(T)) = \mo_d (I + J(T))$. Hence Theorem~\ref{thm:tileable-semistable} gives our assertions.
    \end{proof}

For stability, we obtain the following result.

\begin{proposition} \label{pro:pp-stable}
    Let $I$ be an Artinian ideal in $R = K[x,y,z]$ generated by monomials whose degrees are bounded above by $d$.
    If $T = T_d(I)$ is non-empty, tileable, and perfectly-punctured, then $\widetilde\syz(I + J(T))$ is stable
    if and only if every proper monomial subregion of $T$ is under-punctured.
\end{proposition}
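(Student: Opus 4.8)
The plan is to read stability off Brenner's criterion in its puncture form, Corollary~\ref{cor:T-stable}, and then convert each resulting numerical inequality into a statement about a monomial subregion of $T$, mirroring the semistability argument of Theorem~\ref{thm:tileable-semistable}. Write $I' = I + J(T)$ with minimal monomial generators $g_1, \dots, g_m$, and put $s_i = d - \deg g_i$ for the side length of the $i$-th puncture. Because $T$ is perfectly-punctured and tileable, Corollary~\ref{cor:pp-tileable} shows that $T$ has no over-punctured monomial subregions and, being balanced, no overlapping punctures; moreover $\mo_d(I') = \mo_d(T) = 0$. Thus every monomial subregion of $T$ already has $\mo \le 0$, and by Remark~\ref{rem:puncture coeff} its over-puncturing coefficient is simply its downward-minus-upward triangle count. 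The proposition is therefore a statement about when these inequalities are \emph{strict}.

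Feeding $\mo_d(I') = 0$ into Corollary~\ref{cor:T-stable} makes the threshold vanish, so $\widetilde\syz(I')$ is stable precisely when
\[
    \mo_{d - d_J}(I'_J) \;=\; \sum_{j \in J} s_j - (d - d_J) \;<\; 0
\]
for every proper $J \subseteq \{1, \dots, m\}$ with $|J| \ge 2$, where $g_J = \gcd\{g_j \st j \in J\}$ has degree $d_J$. I would first note that the left-hand side only grows when $J$ is enlarged to contain every puncture lying in the $g_J$-triangle; hence it suffices to test \emph{saturated} subsets $J = \{i \st g_J \mid g_i\}$, and for such a $J$ the non-overlap of punctures identifies $\mo_{d-d_J}(I'_J)$ with the over-puncturing coefficient of the monomial subregion of $T$ associated to $g_J$. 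When $g_J \neq 1$ this subregion is proper, and the inequality says exactly that it is under-punctured. Running the correspondence in reverse, by associating to a proper monomial subregion the saturated subset of the punctures it contains (whose gcd defines it), matches every proper monomial subregion with such a $J$. This settles the equivalence for all subsets of non-trivial gcd.

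The remaining and most delicate point is the coprime case $g_J = 1$, for which the associated subregion is all of $T$ and hence not proper. I would handle it as in the ``$T'$ empty'' portion of the proof of Theorem~\ref{thm:tileable-semistable}: using the global balance $\sum_{i=1}^m s_i = d$, a proper coprime subset $J$ can fail strict negativity, $\mo_d(I'_J) \ge 0$, only if $\sum_{j \in J} s_j = d$, forcing all punctures outside $J$ to have side length $0$ and the punctures in $J$ to form a coprime family that touches and fills $\mathcal{T}_d$; a replacement argument via Lemma~\ref{lem:replace-tileable} should then exhibit a proper monomial subregion of $T$ that is not under-punctured, contradicting the hypothesis. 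Conversely, stability of $\widetilde\syz(I')$ makes every proper coprime subset strict, so no such spanning configuration occurs. Combining the non-trivial-gcd case with this coprime analysis yields that stability holds if and only if every proper monomial subregion of $T$ is under-punctured.

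The step I expect to be hardest is precisely this faithful translation between subsets and \emph{honest} proper monomial subregions: one must ensure both that $\mo_{d-d_J}(I'_J)$ is realized by a genuine sub-triangle of $T$ having $g_J$ as its defining monomial (no stray puncture of $T$ intruding into the $g_J$-triangle) and that coprime subsets, which only see the whole triangle, are nonetheless controlled by a proper subregion. The non-overlap consequence of Corollary~\ref{cor:pp-tileable}, together with the replacement Lemma~\ref{lem:replace-tileable}, are the tools I would lean on at exactly this junction.
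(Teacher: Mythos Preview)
Your strategy is precisely the paper's: feed $\mo_d(I')=0$ into Corollary~\ref{cor:T-stable}, so stability becomes $\mo_{d-d_J}(I'_J)<0$ for every proper $J$, and then identify each such inequality with the under-puncturing of the monomial subregion cut out by $g_J$. The paper's proof is a compressed one-paragraph version of exactly this; it simply asserts the equivalence ``$\mo_{d-d_J}(I'_J)<0$ for all proper $J$'' $\Leftrightarrow$ ``every proper monomial subregion is under-punctured'' and cites Remark~\ref{rem:puncture coeff}, without separating out the saturated subsets or the coprime case. So your non-trivial-gcd analysis is a faithful (and more careful) unpacking of what the paper does.

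Where you diverge is in treating the case $g_J=1$ explicitly, and here your sketch breaks down. You claim that a proper coprime $J$ with $\mo_d(I'_J)\ge 0$ forces the punctures in $J$ to ``touch and fill $\mathcal{T}_d$,'' and that a replacement argument then produces a proper non-under-punctured subregion. Neither assertion holds. Take $I=(x^2,y^2,z^2,xyz)$ with $d=3$: the region $T=T_3(I)$ is non-empty, tileable, and perfectly-punctured; the proper coprime subset $J=\{x^2,y^2,z^2\}$ has $\mo_3(I_J)=0$, so $\widetilde\syz I$ is not stable; yet the three corner punctures do not fill $\mathcal{T}_3$, and every non-empty proper monomial subregion of $T$ is strictly under-punctured, so no replacement argument will exhibit one that is not. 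The real point is that this obstruction arises only when $I+J(T)$ has a minimal generator of degree exactly $d$ (side length zero). When every $s_i>0$, any proper $J$ satisfies $\sum_{j\in J}s_j<\sum_i s_i=d$, so the coprime inequality is automatic and your non-trivial-gcd argument already finishes the proof. The paper is implicitly working under that convention; your write-up should either make the same assumption explicit or note that the statement needs it.
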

\begin{proof}
    We may assume $I = I + J(T)$. As $T$ is perfectly-punctured, we have that $\mo_d (I) = \mo_d (T) = 0$.  In particular, no punctures of $T$ overlap.  Using Corollary~\ref{cor:T-stable}, we see that
    $\widetilde\syz{I}$ is stable if and only if $\mo_{d-d_J} (T_{d-d_J}(I_J)) < 0$ for all
    proper subsets $J$ of the set of minimal generators of $I$.  This is equivalent to every proper monomial subregion of $T$ being under-punctured (see Remark~\ref{rem:puncture coeff}).
\end{proof}

By the preceding theorem and proposition, we have an understanding of semistability and stability for
perfectly-punctured triangular regions.  However, when a region is over-punctured and non-tileable  more
information is needed to infer semistability.

\begin{example} \label{exa:stability}
    There are monomial ideals with stable syzygy bundles whose corresponding triangular regions are
    over-punctured and  non-tileable.  See Figure~\ref{fig:nss-examples}(i) for a specific  example.

    \begin{figure}[!ht]
        \begin{minipage}[b]{0.30\linewidth}
            \centering
            \includegraphics[scale=1]{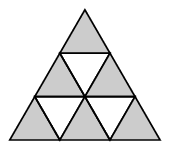}\\
            \emph{(i) $T_3(x^2, y^2, z^2, xy, xz, yz)$}
        \end{minipage}
        \begin{minipage}[b]{0.28\linewidth}
            \centering
            \includegraphics[scale=1]{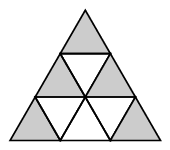}\\
            \emph{(ii) $T_3(x^2, y^2, z^2, xy, xz)$}
        \end{minipage}
        \begin{minipage}[b]{0.40\linewidth}
            \centering
            \includegraphics[scale=1]{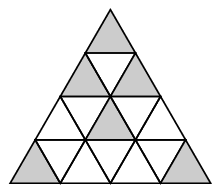}\\
            \emph{(iii) $T_4(x^3, y^3, z^3, xyz, x^2y, x^2z)$}
        \end{minipage}
        \caption{Over-punctured, non-tileable regions and various levels of stability.}
        \label{fig:nss-examples}
    \end{figure}

    Moreover, the ideal $(x^2, y^2, z^2, xy, xz)$ has a semistable, but non-stable  syzygy bundle (the monomial subregion associated
    to $x$ breaks stability), and the ideal $(x^3, y^3, z^3, xyz, x^2y, x^2z)$ has a non-semistable
   syzygy bundle (the monomial subregion associated to $x^2$ breaks semistability).  Both of their triangular regions,
    see Figures~\ref{fig:nss-examples}(ii) and (iii), respectively, are over-punctured and non-tileable.
\end{example}~

\subsection{The weak Lefschetz property}\label{sub:wlp}~

We recall some results that help decide the presence of the weak Lefschetz property. In fact, one needs only check near a ``peak'' of the Hilbert function.

\begin{proposition}{\cite[Proposition 2.3]{CN-small-type}} \label{pro:wlp}
    Let $A \neq 0$ be an Artinian standard graded $K$-algebra, and let $\ell$ be a general linear form.  Suppose $A$ has no non-zero socle elements of degree less than $d-2$ for some integer $d \ge 0$. Then   $A$ has the weak Lefschetz property, provided one of the following conditions is satisfied:

 \begin{itemize}
   \item[(i)] $\times \ell: [A]_{d-2} \rightarrow [A]_{d-1}$ is injective  and
     $\times \ell: [A]_{d-1} \rightarrow [A]_{d}$ is surjective.

   \item[(ii)]  $\times \ell: [A]_{d-2} \rightarrow [A]_{d-1}$ is bijective.
\end{itemize} \end{proposition}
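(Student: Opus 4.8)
The plan is to establish two monotonicity principles for the rank of $\times \ell$ along the Hilbert function, and then to use the socle hypothesis to control how far the injective range extends downward. Throughout I fix the linear form $\ell$ supplied by the hypotheses; genericity plays no role once $\ell$ is chosen, so the argument is purely about a single multiplication map in each degree. The weak Lefschetz property will follow once I show that $\times \ell : [A]_j \to [A]_{j+1}$ is either injective or surjective in every degree $j$, since each of these alone forces maximal rank.

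First I would record the surjectivity principle: if $\times \ell : [A]_i \to [A]_{i+1}$ is surjective, then $\times \ell : [A]_j \to [A]_{j+1}$ is surjective for every $j \ge i$. This follows by passing to the standard graded algebra $B = A/\ell A$, whose graded piece $[B]_{j+1}$ is exactly the cokernel of $\times \ell$ in degree $j$. Surjectivity in degree $i$ says $[B]_{i+1} = 0$, and since $B$ is standard graded we have $[B]_{j+1} = [B]_1 \cdot [B]_j$, so $[B]_{i+1} = 0$ forces $[B]_j = 0$ for all $j \ge i+1$; hence $\times \ell$ is surjective in all degrees $\ge i$.

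The main step is the injectivity principle, and this is where the socle hypothesis enters. Suppose $\times \ell : [A]_i \to [A]_{i+1}$ is injective and take $a \in [A]_{i-1}$ with $\ell a = 0$. For every linear form $m$ one has $\ell (m a) = m (\ell a) = 0$, so $m a$ lies in the kernel of $\times \ell$ in degree $i$ and therefore vanishes; as $A$ is standard graded this gives $a \cdot \mathfrak{m} = 0$, i.e.\ $a \in [\soc A]_{i-1}$. Consequently, if $[\soc A]_{i-1} = 0$ then injectivity in degree $i$ forces injectivity in degree $i-1$. Iterating downward from any degree at which $\times \ell$ is injective, injectivity persists through every lower degree whose predecessor socle vanishes; since $[\soc A]_j = 0$ for all $j \le d-3$, injectivity in degree $d-2$ propagates to injectivity in all degrees $j \le d-2$. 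I expect the one point demanding care to be this socle bookkeeping, namely checking that each downward step only ever requires the vanishing of $[\soc A]_j$ with $j \le d-3$.

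With both principles in hand the two cases close immediately. In case (i), injectivity at $d-2$ gives injectivity, hence maximal rank, for all $j \le d-2$, while surjectivity at $d-1$ gives surjectivity, hence maximal rank, for all $j \ge d-1$; every degree is thereby covered. In case (ii), bijectivity at $d-2$ supplies both halves at once: injectivity propagates downward to all $j \le d-2$ and surjectivity propagates upward to all $j \ge d-2$, so again $\times \ell$ has maximal rank in every degree and $A$ has the weak Lefschetz property.
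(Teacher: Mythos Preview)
The paper does not supply its own proof of this proposition; it is quoted verbatim from \cite[Proposition~2.3]{CN-small-type} and used as a black box. Your argument is correct and is in fact the standard one: surjectivity of $\times \ell$ propagates upward because $A/\ell A$ is standard graded, and injectivity propagates downward one step at a time because any $a \in \ker(\times \ell)_{i-1}$ satisfies $m a \in \ker(\times \ell)_i = 0$ for every linear $m$, placing $a$ in $[\soc A]_{i-1}$; the socle hypothesis in degrees $\le d-3$ is exactly what is needed for the downward induction starting from degree $d-2$.
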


Moreover, for monomial algebras, it is enough to decide whether the sum of the variables is a Lefschetz element.

\begin{proposition}{\cite[Proposition~2.2]{MMN-2011}}
            \label{pro:mono}
Let $A = R/I$ be a monomial Artinian $K$-algebra, where $K$ is an infinite field.
    For any integer $d$, the following conditions are equivalent:
    \begin{enumerate}
        \item The multiplication map $\times L: [A]_{d-1} \to [A]_d$ has maximal rank, where $L \in R$ is a general linear form.
        \item The multiplication map $\times (x + y + z): [A]_{d-1} \to [A]_d$ has maximal rank.
    \end{enumerate}
\end{proposition}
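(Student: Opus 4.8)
The plan is to exploit the torus symmetry that a monomial ideal enjoys and combine it with the openness of the maximal-rank locus. Write $A = R/I$ with $I$ monomial, and let $r_{\max}$ denote the generic rank of the multiplication maps $\times \ell \colon [A]_{d-1} \to [A]_d$ as $\ell$ ranges over the linear forms of $R$. Both conditions in the statement assert that a particular one of these maps has rank equal to $\min\{\dim_K [A]_{d-1}, \dim_K [A]_d\}$; I would show that $\times(x+y+z)$ already realizes the generic rank $r_{\max}$, after which the two conditions become literally the same statement $r_{\max} = \min\{\dim_K [A]_{d-1}, \dim_K [A]_d\}$.

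First I would set up the symmetry. For $t = (t_1, t_2, t_3) \in (K^{\ast})^3$ let $\phi_t \colon R \to R$ be the graded $K$-algebra automorphism with $\phi_t(x) = t_1 x$, $\phi_t(y) = t_2 y$, $\phi_t(z) = t_3 z$. Since $I$ is generated by monomials, each generator is sent to a nonzero scalar multiple of itself, so $\phi_t(I) = I$ and $\phi_t$ descends to a degree-preserving automorphism $\bar\phi_t$ of $A$, which restricts to a $K$-linear isomorphism on each graded piece. Because $\bar\phi_t$ is multiplicative and $\phi_t(x+y+z) = t_1 x + t_2 y + t_3 z$, one checks the commuting identity
\[
  \bar\phi_t \circ \bigl(\times(x+y+z)\bigr) = \bigl(\times(t_1 x + t_2 y + t_3 z)\bigr) \circ \bar\phi_t
\]
on $[A]_{d-1}$. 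As the maps $\bar\phi_t$ are isomorphisms, $\times(x+y+z)$ and $\times(t_1 x + t_2 y + t_3 z)$ have the same rank for every $t \in (K^{\ast})^3$. Thus the rank of $\times \ell$ is constant, say equal to $\rho$, on the set $V$ of linear forms all of whose coefficients are nonzero, and in particular $\operatorname{rank}\bigl(\times(x+y+z)\bigr) = \rho$.

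Next I would compare $\rho$ with the generic rank. The locus of linear forms at which $\times \ell$ attains the maximal rank $r_{\max}$ is the nonvanishing locus of the maximal minors of the matrix of $\times \ell$, whose entries are linear in the coefficients of $\ell$; hence it is a nonempty open subset $U$ of the affine space of linear forms. Since that space is irreducible and $K$ is infinite, both $U$ and $V$ are dense, so $U \cap V \neq \varnothing$. A form in $U \cap V$ has rank $r_{\max}$ (it lies in $U$) and rank $\rho$ (it lies in $V$); therefore $\rho = r_{\max}$, so $\times(x+y+z)$ realizes the generic rank.

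Finally, a general linear form $L$ lies in $U$ by the meaning of ``general'', so $\times L$ has rank $r_{\max} = \rho = \operatorname{rank}\bigl(\times(x+y+z)\bigr)$. Consequently $\times L$ has maximal rank if and only if $\times(x+y+z)$ does, which is exactly the claimed equivalence. The only point requiring real care is the passage between ``general'' and ``generic'': one must observe that the maximal-rank locus, being a nonempty open subvariety of an irreducible affine space over an infinite field, is dense and meets the dense open set $V$. This is precisely where the hypothesis that $K$ is infinite is used, and it is essentially the only obstacle in the argument.
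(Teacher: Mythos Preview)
Your argument is correct. The paper itself does not supply a proof of this proposition; it is quoted verbatim from \cite[Proposition~2.2]{MMN-2011}, so there is no in-paper argument to compare against. That said, the torus-action proof you give---observing that the $(K^\ast)^3$-action fixes $I$ and conjugates $\times(x+y+z)$ into $\times(t_1x+t_2y+t_3z)$, so the rank is constant on the dense open set of linear forms with all coefficients nonzero, which must then meet the open maximal-rank locus---is exactly the standard argument and is essentially what appears in \cite{MMN-2011}. Your handling of the one delicate point (why the infinite-field hypothesis guarantees that the two nonempty Zariski-open subsets of the space of linear forms actually intersect) is fine.
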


As pointed out above Proposition \ref{prop:pm-det}, for a monomial ideal $I \subset K[x, y, z]$,  the bi-adjacency matrix $Z(T_d (I))$ can be described using  multiplication by $\ell = x+y+z$.  We thus get the following criterion for
the presence of the weak Lefschetz property, where we consider the entries of $Z(T_d (I))$ as elements of the base field $K$.

\begin{corollary}{\cite[Corollary 4.7]{CN-small-type}}\label{cor:wlp-biadj}
    Let $I$ be an Artinian monomial ideal in $R = K[x,y,z]$. Then $R/I$ has the weak Lefschetz property if and only if,
    for each positive integer $d$, the matrix $Z(T_d(I))$ has maximal rank.
\end{corollary}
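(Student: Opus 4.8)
The plan is to assemble the two recalled propositions into a single chain of equivalences, the bridge being the identification of $Z(T_d(I))$ with the multiplication matrix by $x+y+z$. First I would record the dictionary: as noted in the text preceding Proposition~\ref{prop:pm-det}, if $M(d)$ denotes the matrix of $\times(x+y+z)\colon [R/I]_{d-2}\to[R/I]_{d-1}$ in the monomial bases, then $Z(T_d(I))$ is the transpose of $M(d)$. Since transposition preserves rank, $Z(T_d(I))$ has maximal rank if and only if $\times(x+y+z)\colon [R/I]_{d-2}\to[R/I]_{d-1}$ does. As $d$ runs over the positive integers the index $d-2$ runs over all integers $\ge -1$, and for $d=1$ the source $[R/I]_{-1}$ vanishes so the condition is vacuous there; hence the family of maps indexed by $d\ge 1$ is exactly the family $\times(x+y+z)\colon [R/I]_i\to[R/I]_{i+1}$ for $i\ge 0$. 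Thus it suffices to show that $R/I$ has the weak Lefschetz property if and only if $\times(x+y+z)$ has maximal rank in every degree.

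For the backward implication this is immediate and needs no genericity: if $Z(T_d(I))$ has maximal rank for all $d$, then $\times(x+y+z)$ has maximal rank on $[R/I]_i\to[R/I]_{i+1}$ for every $i\ge 0$, so the \emph{specific} linear form $\ell=x+y+z$ is a weak Lefschetz element, and $R/I$ has the weak Lefschetz property by definition.

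For the forward implication, suppose $R/I$ has the weak Lefschetz property, witnessed by some linear form $\ell_0$, so that $\times\ell_0\colon [R/I]_i\to[R/I]_{i+1}$ has maximal rank for all $i$. I would argue one degree at a time. Fix $i$. The entries of the matrix of $\times(ax+by+cz)$ are linear in $(a,b,c)$, so its rank is lower semicontinuous on the parameter space $K^3$; in particular the locus where the rank equals the maximal possible value $\min\{\dim_K[R/I]_i,\dim_K[R/I]_{i+1}\}$ is Zariski open, and it is nonempty because it contains $\ell_0$. Over an infinite field a nonempty open set is dense, so a general linear form attains maximal rank on $[R/I]_i\to[R/I]_{i+1}$. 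Proposition~\ref{pro:mono} then upgrades this to $\times(x+y+z)$ attaining maximal rank in degree $i$. As $i$ was arbitrary, $\times(x+y+z)$ has maximal rank in every degree, and the dictionary above yields that $Z(T_d(I))$ has maximal rank for every $d$.

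I expect the only delicate point to be this genericity step in the forward direction, namely passing from ``the particular element $\ell_0$ has maximal rank'' to ``a general linear form has maximal rank'', which rests on the semicontinuity of rank and on $K$ being infinite (the hypothesis under which Proposition~\ref{pro:mono} is stated). What makes the argument go through cleanly is that it is performed degree by degree and its conclusion is channelled through the single universal form $x+y+z$, so no simultaneous choice of a good linear form across all degrees is needed; the Artinian hypothesis enters only to guarantee that the relevant range of degrees is finite.
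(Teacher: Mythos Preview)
The paper does not supply its own proof of this corollary; it is quoted verbatim from \cite[Corollary~4.7]{CN-small-type} and used as a black box. Your argument is correct and is exactly the proof one would reconstruct from the ingredients the paper does record: the identification of $Z(T_d(I))$ with the transpose of the multiplication matrix $M(d)$ (stated just above Proposition~\ref{prop:pm-det}) together with Proposition~\ref{pro:mono}. The degree-by-degree semicontinuity step you isolate is indeed the only nontrivial point in the forward direction, and you handle it properly.

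Two minor remarks. First, you correctly flag that Proposition~\ref{pro:mono} is stated under the hypothesis that $K$ is infinite, whereas the corollary as quoted here carries no such hypothesis; your proof therefore establishes the result only for infinite $K$, which is consistent with how the result is actually applied in the paper. Second, your closing comment that the Artinian hypothesis ``enters only to guarantee that the relevant range of degrees is finite'' is not quite accurate: your argument is genuinely degree-by-degree and never uses finiteness of the range, so Artinianness plays no role in the proof itself (it is a standing assumption so that the weak Lefschetz property is meaningful). This does not affect the validity of the argument.
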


This can be used to infer the weak Lefschetz property in sufficiently large characteristic from its presence in characteristic zero.

\begin{proposition}{\cite[Proposition 7.9]{CN-small-type}}\label{pro:char-0-to-p}
    Let $R/I$ be any Artinian monomial algebra such that $R/I$ has the weak Lefschetz property in characteristic zero.
    If $I$ contains the powers $x^a, y^b, z^c$, then $R/I$ has the weak Lefschetz property in positive characteristic
    whenever $\charf K > 3^{\frac{1}{2}\binom{\frac{1}{2} (a+b+c) + 2}{2}}$.
\end{proposition}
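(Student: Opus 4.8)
The plan is to reduce the statement to one about the integer matrices $Z(T_d(I))$ and to show that maximal rank, once it is known over a field of characteristic zero, survives reduction modulo a sufficiently large prime. By Corollary~\ref{cor:wlp-biadj}, $R/I$ has the weak Lefschetz property over $K$ if and only if the zero-one matrix $Z(T_d(I))$ has maximal rank, computed over $K$, for every positive integer $d$. The point is that $Z(T_d(I))$ is a combinatorial object---the bi-adjacency matrix of a graph---hence an honest integer matrix with entries in $\{0,1\}$ that is the same for every base field, the entries merely being interpreted in $K$. Thus the hypothesis that $R/I$ has the weak Lefschetz property in characteristic zero says exactly that each $Z(T_d(I))$, viewed over $\ZZ$, has maximal rank; equivalently, for each relevant $d$ there is a maximal square submatrix whose determinant is a nonzero integer $m_d$.

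The first key step is to bound the size of these determinants. Recall from the discussion preceding Proposition~\ref{prop:pm-det} that $Z(T_d(I))$ is the transpose of the matrix $M(d)$ of $\times(x+y+z)\colon [R/I]_{d-2}\to[R/I]_{d-1}$ in the monomial bases. Since multiplication by $x+y+z$ sends each basis monomial to a sum of at most three monomials, every column of $M(d)$, equivalently every row of $Z(T_d(I))$, contains at most three ones. Hence any square submatrix of size $N$ has rows of Euclidean length at most $\sqrt{3}$, and Hadamard's inequality gives $|\det| \le 3^{N/2}$. Consequently $|m_d| \le 3^{N_d/2}$, where $N_d = \min\{\dim_K[R/I]_{d-2},\dim_K[R/I]_{d-1}\}$ is the size of a maximal square submatrix. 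For $d$ outside the range in which both spaces are nonzero the matrix is empty and its maximal rank is preserved in every characteristic, so only finitely many $d$ matter. If $\charf K = p$ exceeds every such bound, then each relevant $m_d$ is a nonzero integer of absolute value less than $p$, so $m_d \not\equiv 0 \pmod p$ and $Z(T_d(I))$ retains maximal rank over $K$; by Corollary~\ref{cor:wlp-biadj} this gives the weak Lefschetz property in characteristic $p$.

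The remaining, and most delicate, step is to bound $N_d$ uniformly in $d$. Because $I$ contains $x^a,y^b,z^c$, we have $\dim_K[R/I]_e \le \dim_K[R/(x^a,y^b,z^c)]_e$ for all $e$, and the Hilbert function of the monomial complete intersection $R/(x^a,y^b,z^c)$ is symmetric about its center and unimodal, with socle degree $a+b+c-3$. Its maximal value is therefore attained at the central degree, which is at most $\tfrac12(a+b+c)$, and the crude estimate $\dim_K[R/I]_e \le \binom{e+2}{2}$ applied there yields $N_d \le \binom{\frac12(a+b+c)+2}{2}$ for every $d$. Combining this with the Hadamard bound, every relevant $m_d$ satisfies $|m_d| \le 3^{\frac12\binom{\frac12(a+b+c)+2}{2}}$, so the hypothesis $\charf K > 3^{\frac12\binom{\frac12(a+b+c)+2}{2}}$ forces $m_d \not\equiv 0 \pmod p$ and completes the proof. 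The main obstacle is precisely this uniform size bound: one must use the symmetry and unimodality of the complete-intersection Hilbert function to ensure that the large matrices occur only near the center, where the dimension count stays below the stated binomial coefficient.
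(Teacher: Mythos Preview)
The paper does not supply a proof of this proposition; it is quoted verbatim from \cite[Proposition~7.9]{CN-small-type} and used as a black box. So there is no in-paper argument to compare against.

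That said, your argument is correct and is essentially the natural one. The reduction via Corollary~\ref{cor:wlp-biadj} to integer matrices $Z(T_d(I))$, the observation that each row has at most three ones (since $(x+y+z)m$ has at most three monomial summands), and Hadamard's inequality giving $|\det| \le 3^{N/2}$ for an $N\times N$ minor are all sound. The uniform bound on $N$ is also fine, though your wording could be tightened: you first bound $\dim_K[R/I]_e$ by the Hilbert function $h(e)$ of the complete intersection $R/(x^a,y^b,z^c)$, then use symmetry and unimodality of $h$ to locate its maximum at $e^* \le \tfrac12(a+b+c-3)$, and finally bound $h(e^*) \le \binom{e^*+2}{2} \le \binom{\frac12(a+b+c)+2}{2}$. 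At one point you write ``$\dim_K[R/I]_e \le \binom{e+2}{2}$ applied there'' when what you are really applying is $h(e^*)\le\binom{e^*+2}{2}$ for the complete intersection; the conclusion is unaffected. With that minor clarification, the proof stands.
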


\section{Artinian monomial almost complete intersections} \label{sec:amaci}

This section
presents an in-depth discussion of  Artinian monomial ideals of $R$ with exactly four minimal generators. They are called
Artinian monomial almost complete intersections. These ideals have been discussed, for example, in \cite{BK}
and~\cite[Section~6]{MMN-2011}. In particular, we will answer some of the questions posed in \cite{MMN-2011}. Besides addressing the  weak Lefschetz property, we discuss the splitting types of the syzygy bundles of these ideals. Particular attention is paid if the characteristic is positive.
Some of our results are used in \cite{MMMNW} for studying ideals with the Rees property.

Each Artinian ideal of $K[x,y,z]$ with exactly four monomial minimal generators is of the form
\[
    I_{a,b,c,\alpha,\beta,\gamma} = (x^a, y^b, z^c, x^\alpha y^\beta z^\gamma),
\]
where $0 \leq \alpha < a$, $0 \leq \beta < b,$ and $0 \leq \gamma < c$, such that at most one of $\alpha$, $\beta$, and
$\gamma$ is zero. If one of $\alpha$, $\beta$, and $\gamma$ is zero, then $R/I_{a,b,c,\alpha,\beta,\gamma}$ has type
two. In this case, the presence of the weak Lefschetz property has already been described in~\cite{CN-small-type}.
Thus, throughout this section we assume that the integers $\alpha, \beta$, and $\gamma$ are all positive;
this forces $R/I_{a,b,c,\alpha,\beta,\gamma}$ to have Cohen-Macaulay type three. More precisely:

\begin{proposition}{\cite[Proposition 6.1]{MMN-2011}} \label{pro:amaci-props}
    Let $I = I_{a,b,c,\alpha,\beta,\gamma}$ be defined as above. Then $R/I$ has three minimal socle generators. They
    have degrees $\alpha + b + c - 3$, $a + \beta + c - 3$, and $a + b + \gamma - 3$.

    In particular, $R/I$ is level if and only if $a - \alpha = b - \beta = c - \gamma$.
\end{proposition}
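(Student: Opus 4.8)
The plan is to compute $\soc A$ for $A = R/I$ directly from the monomial structure. Since $I$ is a monomial ideal, a $K$-basis of $A$ is given by the monomials not lying in $I$, and because $\soc A$ is annihilated by $\mathfrak{m} = (x,y,z)$ it is a $K$-vector space whose minimal generators as an $A$-module are simply a $K$-basis. Thus it suffices to determine all \emph{socle monomials}, i.e.\ monomials $m = x^i y^j z^k \notin I$ for which $xm, ym, zm \in I$, and then read off their degrees. The first ingredient is the membership criterion: $x^i y^j z^k \in I$ exactly when $i \ge a$, or $j \ge b$, or $k \ge c$, or ($i \ge \alpha$ and $j \ge \beta$ and $k \ge \gamma$). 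In particular $m \notin I$ forces $i < a$, $j < b$, $k < c$ together with at least one of $i < \alpha$, $j < \beta$, $k < \gamma$, and this trichotomy drives the whole argument.

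Next, for a socle monomial I would analyze the case $i < \alpha$; the cases $j < \beta$ and $k < \gamma$ then follow by the evident symmetry permuting the triples $(x,a,\alpha)$, $(y,b,\beta)$, and $(z,c,\gamma)$. Because $i < \alpha < a$, the generator $x^a$ cannot divide $xm$, so $xm \in I$ forces $x^\alpha y^\beta z^\gamma \mid xm$; combined with $i < \alpha$ this pins down $i = \alpha - 1$ and yields $j \ge \beta$, $k \ge \gamma$. Now $i = \alpha - 1 < \alpha$ prevents $x^\alpha y^\beta z^\gamma$ from dividing either $ym$ or $zm$, so the requirements $ym \in I$ and $zm \in I$ can only be met through the pure powers $y^b$ and $z^c$, forcing $j = b-1$ and $k = c-1$. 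Hence the unique socle monomial with $i < \alpha$ is $x^{\alpha-1} y^{b-1} z^{c-1}$, of degree $\alpha + b + c - 3$. This forcing step --- showing that once $i < \alpha$ the two remaining multiplications are \emph{obliged} to strike the pure-power walls --- is the technical heart of the proof; it is also exactly where the standing hypothesis $\alpha, \beta, \gamma > 0$ enters, guaranteeing that the resulting exponents are nonnegative and that the three socle monomials are genuinely distinct.

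By the stated symmetry, the only socle monomials with $j < \beta$ or with $k < \gamma$ are $x^{a-1} y^{\beta-1} z^{c-1}$ and $x^{a-1} y^{b-1} z^{\gamma-1}$, of degrees $a + \beta + c - 3$ and $a + b + \gamma - 3$, and a direct substitution confirms that all three are indeed in the socle. Since every monomial outside $I$ satisfies at least one of the three strict inequalities, these three monomials exhaust $\soc A$; they are pairwise distinct (as $\alpha < a$, $\beta < b$, $\gamma < c$) and none satisfies two of the inequalities simultaneously. Therefore $\soc A$ has exactly these three as a basis, giving three minimal socle generators of the asserted degrees.

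Finally, $A$ is level precisely when all minimal socle generators lie in a single degree. Equating the three degrees in pairs gives $\alpha + b = a + \beta$ and $\beta + c = b + \gamma$, that is $a - \alpha = b - \beta$ and $c - \gamma = b - \beta$, which is equivalent to $a - \alpha = b - \beta = c - \gamma$. This yields the stated characterization of levelness. I anticipate no serious obstacle beyond the case analysis of the second paragraph; the remaining steps are routine verifications and arithmetic.
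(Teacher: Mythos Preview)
Your argument is correct. Note, however, that the paper does not supply its own proof of this proposition: it is quoted from \cite[Proposition~6.1]{MMN-2011} and stated without argument. Your direct computation of the socle monomials via the membership criterion for $I$ is the standard elementary approach, and it is essentially what one finds in the cited source (where the result is obtained by writing down the minimal free resolution of $R/I$, from which the socle degrees can be read off the last Betti numbers). Your route avoids the resolution and works purely at the level of monomials, which is arguably cleaner for this statement alone; the resolution approach has the advantage of yielding the full graded Betti table at once. Either way, the content is the same and your proof stands.
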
~

\subsection{Presence of the weak Lefschetz property}~\par\label{subsec:aci-wlp}

Brenner made Theorem \ref{thm:stable-syz} more explicit in the situation at hand.

\begin{proposition}{\cite[Corollary~7.3]{Br}} \label{pro:amaci-semistable}
    Let $I = I_{a,b,c,\alpha,\beta,\gamma}$ be defined as above, and suppose $K$ is a field of characteristic zero. Set
    $d = \frac{1}{3}(a+b+c+\alpha + \beta + \gamma)$. Then $I$ has a semistable syzygy bundle if and only if the
    following three conditions are satisfied:
    \begin{enumerate}
        \item $\max\{a, b, c, \alpha + \beta + \gamma\} \leq d$;
        \item $\min\{\alpha + \beta + c, \alpha + b + \gamma, a + \beta + \gamma\} \geq d$; and
        \item $\min\{a+b, a+c, b+c\} \geq d$.
    \end{enumerate}
\end{proposition}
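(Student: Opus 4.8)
The plan is to apply Brenner's general criterion, Theorem~\ref{thm:stable-syz}, directly to the four generators $g_1 = x^a$, $g_2 = y^b$, $g_3 = z^c$, and $g_4 = x^\alpha y^\beta z^\gamma$, of degrees $a$, $b$, $c$, and $\alpha+\beta+\gamma$, respectively. Here $m = 4$ and $\sum_{i=1}^4 \deg g_i = a+b+c+\alpha+\beta+\gamma = 3d$, so the right-hand side of the inequality in Theorem~\ref{thm:stable-syz} equals $\frac{-3d}{3} = -d$ for every choice of $J$. Semistability is thus equivalent to the collection of inequalities
\[
    \sum_{j \in J} \deg g_j - d_J \geq (|J|-1)\, d
\]
as $J$ ranges over all proper subsets of $\{1,2,3,4\}$ with $|J| \in \{2,3\}$. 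First I would enumerate these ten subsets and compute the relevant greatest common divisors.

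For the six two-element subsets the inequality reads $\deg g_i + \deg g_j - d_{\{i,j\}} \geq d$. The three pairs of pure powers $\{x^a, y^b\}$, $\{x^a, z^c\}$, $\{y^b, z^c\}$ are coprime, so $d_J = 0$ and the inequalities become $a+b \geq d$, $a+c \geq d$, $b+c \geq d$; these are exactly condition~(iii). For the three pairs containing $g_4$, the hypotheses $\alpha < a$, $\beta < b$, $\gamma < c$ give $\gcd(x^a, g_4) = x^\alpha$, $\gcd(y^b, g_4) = y^\beta$, and $\gcd(z^c, g_4) = z^\gamma$, so that $d_J \in \{\alpha, \beta, \gamma\}$. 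Substituting, the inequalities collapse to $a + \beta + \gamma \geq d$, $\alpha + b + \gamma \geq d$, and $\alpha + \beta + c \geq d$, which are precisely condition~(ii).

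For the four three-element subsets, each contains at least two of the pairwise coprime pure powers $x^a, y^b, z^c$, forcing $d_J = 0$; the inequality becomes $\sum_{j \in J} \deg g_j \geq 2d$. Using $\sum_{i=1}^4 \deg g_i = 3d$, the single complementary generator $g_k$ then satisfies $\deg g_k \leq d$, and as $k$ runs over $\{1,2,3,4\}$ this yields exactly $\max\{a, b, c, \alpha+\beta+\gamma\} \leq d$, which is condition~(i). Collecting the three families shows that the full list of Brenner inequalities is equivalent to the conjunction of (i), (ii), and (iii), as claimed; the stability statement follows the same way upon replacing every inequality by its strict form.

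Since the argument is a finite case check, I expect no serious obstacle; the only points demanding care are the greatest-common-divisor computations for the pairs involving $g_4$ --- where one must invoke $\alpha < a$, $\beta < b$, $\gamma < c$ to conclude $\min(a,\alpha) = \alpha$ and so on --- and the bookkeeping matching each family of subsets to the correct numbered condition. One could alternatively route the computation through Corollary~\ref{cor:T-stable}, rewriting each inequality in terms of the over-puncturing coefficients $\mo_{d-d_J}(I_J)$, but the direct application of Theorem~\ref{thm:stable-syz} is the most transparent path.
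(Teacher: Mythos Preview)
Your argument is correct: the ten proper subsets $J$ with $|J|\in\{2,3\}$ are exhaustively handled, the gcd computations are right (in particular $\gcd(x^a,g_4)=x^\alpha$ etc.\ using $\alpha<a$, $\beta<b$, $\gamma<c$, and every three-element subset contains two coprime pure powers so has $d_J=0$), and the identity $\sum_i\deg g_i=3d$ makes the complement trick for $|J|=3$ work cleanly.

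Note, however, that the paper does not give its own proof of this proposition; it is simply quoted as \cite[Corollary~7.3]{Br}. Your write-up is thus a self-contained derivation of Brenner's specialization from his general criterion (Theorem~\ref{thm:stable-syz}), which is exactly how Brenner himself obtains it. One minor remark: the proposition as stated concerns only semistability, so your closing sentence about the strict-inequality/stability variant, while correct, goes slightly beyond what is being asserted.
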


Furthermore, Brenner and Kaid showed that, for almost complete intersections, nonsemistability implies the weak
Lefschetz property in characteristic zero.

\begin{proposition}{\cite[Corollary~3.3]{BK}} \label{pro:amaci-nss-wlp}
    Let $K$ be a field of characteristic zero. Then $I_{a,b,c,\alpha,\beta,\gamma}$ has the weak Lefschetz property if
    its syzygy bundle is not semistable.
\end{proposition}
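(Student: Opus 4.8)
The plan is to recast the weak Lefschetz property cohomologically and then exploit the destabilizing subsheaf that non-semistability provides. Since $R/I$ is Artinian, the sheafified ideal is $\mathcal{O}_{\PP^2}$, so sheafifying the defining sequence of $\syz I$ gives
\[
    0 \to E \to \bigoplus_{i=1}^{4} \mathcal{O}_{\PP^2}(-d_i) \to \mathcal{O}_{\PP^2} \to 0,
\]
where $E = \widetilde{\syz} I$ has rank $3$ and $d_1,\dots,d_4$ are the generator degrees. Taking cohomology and using $H^1(\PP^2,\mathcal{O}(t))=0$ identifies $H^1(\PP^2,E(t))$ with $[R/I]_t$ in every degree, and under this identification multiplication by a linear form $\ell$ becomes the induced map $H^1(E(t)) \to H^1(E(t+1))$. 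First I would invoke Propositions~\ref{pro:mono} and~\ref{pro:wlp} to reduce the whole question, with $\ell=x+y+z$, to the maximal rank of this map at the one or two degrees flanking the peak of the Hilbert function.

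Next I would analyze that critical map by restricting to a general line $L=V(\ell)\cong\PP^1$. The sequence $0 \to E(t) \to E(t+1) \to E(t+1)|_L \to 0$ shows that the kernel of $\times\ell$ is the cokernel of the restriction $H^0(E(t+1)) \to H^0(E(t+1)|_L)$, while the cokernel of $\times\ell$ is the kernel of the connecting map $H^1(E(t+1)|_L) \to H^2(E(t))$. By Grothendieck's theorem $E|_L \cong \bigoplus_i \mathcal{O}_L(a_i)$ for the generic splitting type $(a_i)$, so the two obstruction spaces are governed by the summands $a_i$ that are large, respectively small, relative to the twist $-(t+1)$. Thus $\times\ell$ fails to have maximal rank exactly when both obstruction spaces are nonzero and the relevant connecting maps fail to be of maximal rank; for general $\ell$ in characteristic zero one expects these maps to be as injective or surjective as dimensions permit, so the problem reduces to a dimension count.

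The heart of the argument is to feed in non-semistability. By Proposition~\ref{pro:amaci-semistable} its failure is witnessed by one of the explicit inequalities (i)--(iii), and each corresponds to a sub-collection of the generators whose syzygy bundle $F \subset E$ destabilizes $E$, i.e.\ $\mu(F) > \mu(E) > \mu(E/F)$. Because $F$ is the more positive piece, the restricted extension $0 \to F|_L \to E|_L \to (E/F)|_L \to 0$ splits on $L$, and $F$, being a genuine subbundle on $\PP^2$, supplies global syzygies $H^0(F(t+1)) \subseteq H^0(E(t+1))$ that surject onto the $F|_L$-summands; dually, the negativity of $E/F$ both kills the sections of $(E/F)(t+1)|_L$ and makes $H^2(E(t))$ large enough to absorb $H^1(E(t+1)|_L)$. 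Tracking which of these is the binding constraint at the peak, one concludes that $\times\ell$ is either injective or surjective there, hence of maximal rank, and the weak Lefschetz property follows.

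The main obstacle is this last step: one must pin down the exact critical degree from the Hilbert function and verify, uniformly across the three failure modes (i)--(iii), the precise dimension inequality that makes the restriction map surjective or the connecting map injective. This is also exactly where characteristic zero is indispensable, both for the destabilizing structure of Proposition~\ref{pro:amaci-semistable} and for the maximal-rank genericity of the connecting maps; the existence of positive-characteristic counterexamples to the weak Lefschetz property shows that no characteristic-free dimension count can suffice.
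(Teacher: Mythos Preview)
The paper does not give its own proof of this proposition; it is quoted directly from \cite[Corollary~3.3]{BK}. Your sketch is recognisably an outline of the Brenner--Kaid method (cohomology of the syzygy bundle, restriction to a general line, exploitation of a destabilising subsheaf), so in spirit you are aligned with the source. However, as written it is not a proof, and the gaps go beyond the single ``main obstacle'' you flag at the end.

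First, invoking Proposition~\ref{pro:wlp} to reduce to ``one or two degrees flanking the peak'' is illegitimate here: that proposition requires the socle of $R/I$ to sit in degrees $\ge d-2$, and in the non-semistable regime this can fail badly. For instance $I_{2,2,10,1,1,1}$ is non-semistable and, by Proposition~\ref{pro:amaci-props}, has a socle generator in degree~$2$. The argument in \cite{BK} does not localise to a single critical degree in this way.

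Second, several assertions in your third paragraph are unjustified. That the restricted sequence $0\to F|_L\to E|_L\to (E/F)|_L\to 0$ splits needs $\operatorname{Ext}^1_{\PP^1}\bigl((E/F)|_L,\,F|_L\bigr)=0$; this is not an automatic consequence of $\mu(F)>\mu(E/F)$ and already requires Grauert--M\"ulich on the semistable graded pieces to control the summands of both restrictions. The claim that ``$H^0(F(t+1))$ surjects onto the $F|_L$-summands'' is the vanishing $H^1(\PP^2,F(t))=0$, for which you give no reason (and which is genuinely delicate when $F$ has rank~$2$). Likewise, ``the negativity of $E/F$ kills the sections of $(E/F)(t{+}1)|_L$'' is a statement about a \emph{specific} twist $t$ that you never identify. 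And ``for general $\ell$ in characteristic zero one expects these maps to be as injective or surjective as dimensions permit'' is a hope, not an argument.

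What actually closes the loop in \cite{BK} is their Theorem~2.2, an exact criterion relating the weak Lefschetz property to the generic splitting type of $\widetilde{\syz}\,I$; this is the result the present paper invokes just before Proposition~\ref{pro:split-type-semist}. Corollary~3.3 then follows by applying Grauert--M\"ulich to the Harder--Narasimhan graded pieces of $E$, which in the non-semistable case forces the splitting type into a shape for which that criterion is automatically satisfied. Your outline assembles the right ingredients but never states the bridging criterion, and the heuristic cohomological claims you put in its place do not substitute for it.
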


The conclusion of this result is not necessarily true in positive characteristic.

\begin{example} \label{exa:amaci-nss}
    Let $I = I_{5,5,3,1,1,2}$, and thus $d = 6$. Then the syzygy bundle of $I$ is not semistable as
    $\alpha + \beta + c = 5 < d = 6$. However, the triangular region $T_6(I)$ is balanced and $\det{Z(T_6(I))} = 5$.
    Hence, $I$ does not have the weak Lefschetz property if and only if the characteristic of $K$ is $5$.
\end{example}

The following example illustrates that the assumption on the number of minimal generators cannot be dropped in
Proposition~\ref{pro:amaci-semistable}.

\begin{example} \label{exa:amaci-nss-2}
    Consider the ideal $J = (x^5, y^5, z^5, xy^2z, xyz^2)$ with five minimal generators. Then
    Theorem~\ref{cor:T-stable} gives that the syzygy bundle of $J$ is not semistable. Notice that $T_6(J)$ is
    balanced. However, $\det{Z(T_6(J))} = 0$, and so $R/J$ never has the weak Lefschetz property, regardless of the
    characteristic of $K$.
\end{example}

The number $d$ in Proposition~\ref{pro:amaci-semistable} is not assumed to be an integer. In fact, if it is not, then
the algebra has the weak Lefschetz property.

\begin{proposition}{\cite[Theorem~6.2]{MMN-2011}} \label{pro:amaci-not-3}
    Let $K$ be a field of characteristic zero. Then $I_{a,b,c,\alpha,\beta,\gamma}$ has the weak Lefschetz property if
    $a+b+c+\alpha + \beta + \gamma \not\equiv 0 \pmod{3}$.
\end{proposition}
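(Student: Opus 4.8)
The plan is to reduce the weak Lefschetz property to the maximal rank of two multiplication maps at a single ``peak'' of the Hilbert function, and then to dispatch the two cases according to whether $\widetilde\syz{I}$ is semistable. Write $\sigma = a+b+c+\alpha+\beta+\gamma$ and $d_0 = \sigma/3$, so that $\mo_d(I) = 3d - \sigma = 3(d - d_0)$ for every $d$ and, by hypothesis, $d_0 \notin \ZZ$. By Proposition~\ref{pro:mono} it suffices to test $\ell = x+y+z$, and by Corollary~\ref{cor:wlp-biadj} the weak Lefschetz property is equivalent to $Z(T_d(I))$ having maximal rank for all $d$. I would run the whole verification through Proposition~\ref{pro:wlp}, which localizes the question to two consecutive maps once the socle is under control.

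If $\widetilde\syz{I}$ is not semistable, then Proposition~\ref{pro:amaci-nss-wlp} supplies the weak Lefschetz property at once (here characteristic zero is used), and nothing remains. So the substantive case is that $\widetilde\syz{I}$ is semistable, where the three inequalities of Proposition~\ref{pro:amaci-semistable} hold for the non-integer value $d_0$. The first step is the elementary observation that the punctures attached to two monomials $m_1, m_2$ overlap in $\mathcal{T}_e$ precisely when $e > \deg\lcm(m_1,m_2)$ (they share a common upward triangle, of degree $e-1$, exactly when a multiple of $\lcm(m_1,m_2)$ of that degree exists). Feeding the semistability inequalities into this criterion at the two integer degrees flanking $d_0$, namely $\underline d = \lfloor d_0\rfloor$ and $\overline d = \lceil d_0\rceil = \underline d + 1$, I would show that no two punctures of $T_{\underline d}(I)$ or of $T_{\overline d}(I)$ overlap: condition (iii) handles each corner pair (for instance $\deg\lcm(x^a,y^b) = a+b \ge d_0$ forces $a+b \ge \overline d$), while condition (ii) handles each corner against the mixed generator (for instance $\deg\lcm(x^\alpha y^\beta z^\gamma, z^c) = \alpha+\beta+c \ge d_0$), and condition (i) guarantees all side lengths are nonnegative. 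By Remark~\ref{rem:puncture coeff} the absence of overlaps gives $\mo_{\underline d}(T_{\underline d}(I)) = 3(\underline d - d_0) < 0$ and $\mo_{\overline d}(T_{\overline d}(I)) = 3(\overline d - d_0) > 0$; equivalently $h(\underline d - 2) < h(\underline d - 1) > h(\underline d)$, so $R/I$ has a strict single-degree peak, and this is exactly where the non-integrality of $d_0$ enters.

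With the peak located I would apply Proposition~\ref{pro:wlp} with $d = \underline d$. The socle hypothesis holds: since $\alpha,\beta,\gamma \ge 1$, condition (iii) forces each socle degree $\alpha+b+c-3$, $a+\beta+c-3$, $a+b+\gamma-3$ to be at least $\overline d - 2 = \underline d - 1$, so there is no nonzero socle below degree $\underline d - 2$. It then remains to prove that $\times\ell\colon [A]_{\underline d - 2} \to [A]_{\underline d - 1}$ is injective and $\times\ell\colon [A]_{\underline d - 1}\to [A]_{\underline d}$ is surjective; by the dimension counts just obtained these are exactly the maximal-rank assertions for the non-square matrices $Z(T_{\underline d}(I))$ and $Z(T_{\overline d}(I))$.

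Establishing these two maximal-rank statements is the step I expect to be the main obstacle, because the flanking regions are deliberately unbalanced, so that Proposition~\ref{prop:pm-det} (which concerns square matrices) does not apply directly. My plan is to exhibit, inside each region, a balanced and tileable subregion obtained by deleting the excess triangles on the heavy side, so that the associated maximal minor of $Z$ enumerates signed lozenge tilings. Because the punctures do not overlap, each floating puncture is isolated, and the sign-coherence of Proposition~\ref{prop:same-sign} forces this minor to equal, up to sign, a positive number of tilings; being a nonzero integer, it survives in characteristic zero and yields the required maximal rank. The delicate point is to choose the deleted triangles so that the residual region is genuinely tileable and its floating punctures have the parity demanded by Proposition~\ref{prop:same-sign}; this is precisely where the semistability inequalities, rather than the mere non-integrality of $d_0$, do the real work, and it is the part of the argument I would expect to require the most care.
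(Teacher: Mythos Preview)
The paper does not give its own proof; the result is quoted from \cite[Theorem~6.2]{MMN-2011}. The argument there --- and the one the present paper's machinery makes immediate --- runs through the Grauert--M\"ulich theorem rather than through tilings: once the nonsemistable case is dispatched by Proposition~\ref{pro:amaci-nss-wlp}, semistability together with Grauert--M\"ulich forces the generic splitting type of $\widetilde\syz I$ to be $(-k-1,-k,-k)$ or $(-k-1,-k-1,-k)$ according as $\sigma=3k+1$ or $\sigma=3k+2$ (this is exactly the computation recorded in Proposition~\ref{pro:split-type-semist}). Then $\reg J = k$ by Remark~\ref{rem:splitting-type}(iii), so $\times\ell$ is surjective from degree $k-1$ on and injective up to it, and the weak Lefschetz property follows without any tiling combinatorics.

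Your reduction through the socle bound and the location of the peak is correct; the gap is in your final step, and it is structural rather than a matter of care. Proposition~\ref{prop:same-sign} requires every floating puncture to have even side length, and the inner puncture of $T_{\underline d}(I)$ has side length $\underline d-(\alpha+\beta+\gamma)$, on which the semistability inequalities place no parity constraint whatsoever. Deleting the excess triangles cannot repair this. On the $\uptri$-heavy side at $\underline d$, removing an upward unit triangle amounts to adjoining a generator of degree $\underline d-1$, which creates a new puncture of side length one; if you place it against the boundary or a corner puncture it is non-floating but leaves the inner puncture untouched and possibly odd, while if you place it against the inner puncture both remain floating and you now have an odd puncture of side length one. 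On the $\dntri$-heavy side at $\overline d$ you must remove downward triangles, but the resulting region is no longer of the form $T_e(I')$ for any monomial ideal $I'$ (a monomial in $I'$ of degree $\overline d - 2$ would force its three degree-$(\overline d-1)$ multiples into $I'$ as well), so Proposition~\ref{prop:same-sign} is not even available. The semistability inequalities govern overlaps, not parities; the splitting-type argument above is the clean way through.
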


Again, the conclusion of this result may fail in positive characteristic. Indeed, for the ideal $I_{5,5,3,1,1,2}$ in
Example~\ref{exa:amaci-nss} we get $d = \frac{17}{3}$, but it does not have the weak Lefschetz property in
characteristic $5$.

The following result addresses the weak Lefschetz property in the cases that are left out by
Propositions~\ref{pro:amaci-nss-wlp} and~\ref{pro:amaci-not-3}. Its first part extends \cite[Lemma~7.1]{MMN-2011} from
level to arbitrary monomial almost complete intersections. Observe that balanced triangular regions correspond to an
equality of the Hilbert function in two consecutive degrees, dubbed ``twin-peaks'' in \cite{MMN-2011}.

\begin{proposition} \label{pro:amaci-balanced}
    Let $I = I_{a,b,c,\alpha,\beta,\gamma}$, and assume $d = \frac{1}{3}(a+b+c+\alpha+\beta+\gamma)$ is an integer. If
    the syzygy bundle of $I$ is semistable and $d$ is integer, then $T_d(I)$ is perfectly-punctured and balanced.

    Moreover, in this case $R/I$ has the weak Lefschetz property if and only if $\det{Z(T_d(I))}$ is not zero in $K$.
\end{proposition}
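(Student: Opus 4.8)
The plan is to treat the two assertions separately, handling the structural claim about $T_d(I)$ first and then the Lefschetz equivalence. For the first part, I would begin with the elementary observation that, since $I$ has the four generators $x^a, y^b, z^c, x^\alpha y^\beta z^\gamma$ of degrees $a,b,c$, and $\alpha+\beta+\gamma$, the over-puncturing coefficient is
\[
    \mo_d(I) = (d-a) + (d-b) + (d-c) + \bigl(d-(\alpha+\beta+\gamma)\bigr) - d = 3d - (a+b+c+\alpha+\beta+\gamma) = 0,
\]
using $d = \frac13(a+b+c+\alpha+\beta+\gamma)$. Thus $I$ is perfectly-punctured for free. Combining this with the hypothesis that the syzygy bundle is semistable, Theorem~\ref{thm:tileable-semistable} immediately yields that $T = T_d(I)$ is tileable, and a tileable region is necessarily balanced. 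To upgrade balancedness to the statement that $T$ itself is perfectly-punctured, I would invoke Corollary~\ref{cor:T-stable}: semistability together with $\mo_d(I)=0$ forces $\mo_{d-d_J}(I_J) \le 0$ for every proper subset $J$ of the generators, and, exactly as in the proof of Theorem~\ref{thm:tileable-semistable}, this rules out any two punctures overlapping. Since $T$ is balanced and has no overlapping punctures, Remark~\ref{rem:puncture coeff} gives $\mo_d(T) = 0$, i.e.\ $T$ is perfectly-punctured.

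For the second part, the starting point is that balancedness makes $Z(T_d(I))$ a square matrix, and (as recalled above Proposition~\ref{prop:pm-det}) it is the transpose of the matrix of $\times(x+y+z)\colon [R/I]_{d-2} \to [R/I]_{d-1}$ with respect to the monomial bases. Hence ``$\det Z(T_d(I)) \ne 0$ in $K$'' is precisely the statement that this multiplication map is bijective, and by Proposition~\ref{pro:mono} this is equivalent to $\times L$ being bijective in the same degrees for a general linear form $L$. The direction ``WLP $\Rightarrow \det \ne 0$'' is then immediate from Corollary~\ref{cor:wlp-biadj} (or directly from Proposition~\ref{pro:mono}): the weak Lefschetz property forces maximal rank in this degree, which for a square matrix means nonvanishing determinant.

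The substantive direction is ``$\det \ne 0 \Rightarrow$ WLP'', where a single bijectivity in degrees $d-2 \to d-1$ must be promoted to maximal rank in all degrees. Here I would apply Proposition~\ref{pro:wlp}(ii): it suffices to check that $R/I$ has no nonzero socle elements in degrees below $d-2$. By Proposition~\ref{pro:amaci-props} the socle generators sit in degrees $\alpha+b+c-3$, $a+\beta+c-3$, and $a+b+\gamma-3$, so I would bound each of these below by $d-2$. Using the semistability condition $\min\{a+b, a+c, b+c\} \ge d$ from Proposition~\ref{pro:amaci-semistable} together with $\alpha,\beta,\gamma \ge 1$ gives, for instance, $\alpha + b + c \ge 1 + (b+c) \ge d+1$, and symmetrically for the other two, so the minimal socle degree is at least $d-2$. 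With the socle hypothesis verified and $\times L$ bijective in degree $d-2 \to d-1$, Proposition~\ref{pro:wlp}(ii) delivers the weak Lefschetz property.

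I expect the main obstacle to be this socle bound: the equivalence hinges on identifying degree $d$ as the unique critical degree, which is only legitimate once one knows the Hilbert function has no relevant socle below the twin peak, and pinning down which of the three semistability inequalities (it is the pairwise-sum condition, not the one used for balancedness) forces each socle degree up to $d-2$ is the one genuinely non-formal step. Everything else is an assembly of the cited results.
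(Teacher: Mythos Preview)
Your proof is correct, and the overall architecture matches the paper's: establish that $T_d(I)$ is perfectly-punctured and balanced, then verify the socle bound so that Proposition~\ref{pro:wlp}(ii) reduces the weak Lefschetz property to bijectivity in the single degree $d-2\to d-1$. The details differ in two places, and each choice has its own merits.

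For the first assertion, the paper works directly from the explicit inequalities in Proposition~\ref{pro:amaci-semistable}: condition~(i) makes all four puncture side-lengths nonnegative, and conditions~(ii) and~(iii) are unpacked as saying that the degree of the least common multiple of any two generators is at least $d$, i.e.\ no two punctures overlap; perfectly-punctured and balanced then follow. You instead compute $\mo_d(I)=0$ and feed this, together with semistability, into Theorem~\ref{thm:tileable-semistable} to get tileability (hence balancedness), then recover non-overlapping from Corollary~\ref{cor:T-stable} applied to pairs. This is a legitimate shortcut through the general machinery, though it is mildly heavier than the paper's hands-on verification.

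For the socle bound the situation is reversed: the paper argues geometrically that non-overlapping punctures in $T_d(I)$ force the punctures of $T_{d-1}(I)$ to be neither overlapping nor touching, and infers from this that no socle element has degree below $d-2$. You instead read off the three socle degrees from Proposition~\ref{pro:amaci-props} and bound each one using the inequality $\min\{a+b,a+c,b+c\}\ge d$ from Proposition~\ref{pro:amaci-semistable}(iii) together with $\alpha,\beta,\gamma\ge 1$. Your route here is more transparent and entirely self-contained; the paper's argument leans on an implicit correspondence between socle elements and touching punctures that is not spelled out in the proof.
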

\begin{proof}
    Note that condition (i) in Proposition~\ref{pro:amaci-semistable} says that $T_d(I)$ has punctures of nonnegative
    side lengths $d-a, d-b, d-c$, and $d-(\alpha + \beta + \gamma)$. Furthermore, conditions (ii) and (iii) therein are
    equivalent to the fact that the degree of the least common multiple of any two of the minimal generators of $I$ is
    at least $d$, i.e., the punctures of $T_d(I)$ do not overlap. Using the
    assumption that $d$ is an integer, it follows that $T_d(I)$ is perfectly-punctured, and thus balanced.

    Since the punctures of $T_d(I)$ do not overlap, the punctures of $T_{d-1}(I)$ are not overlapping nor touching.
    Thus we conclude that the degrees of the socle generators of $R/I$ are at
    least $d-2$. Hence, Corollary~\ref{cor:wlp-biadj} and Proposition~\ref{pro:wlp} together give that $R/I$ has the weak Lefschetz property if and only if
    $\det{Z(T_d(I))}$ is not zero in $K$.
\end{proof}

In the situation of Proposition~\ref{pro:amaci-balanced}, the fact that $R/I$ has the weak Lefschetz property implies
that $T_d(I)$ is tileable by Proposition~\ref{prop:pm-det}. Tileability remains true even if $R/I$ fails
to have the weak Lefschetz property.

\begin{proposition} \label{pro:amaci-ss-tileable}
    Let $I = I_{a,b,c,\alpha,\beta,\gamma}$. If $R/I$ fails to have the weak Lefschetz property in characteristic zero,
    then $d = \frac{1}{3}(a+b+c+\alpha+\beta+\gamma)$ is an integer and $T_d(I)$ is tileable.
\end{proposition}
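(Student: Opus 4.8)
The whole statement will follow by chaining together results already established, once we recognize that a failure of the weak Lefschetz property (in characteristic zero) forces $I$ into the single regime where tileability is guaranteed: integral $d$, semistable syzygy bundle, and perfect puncturing simultaneously. I would begin by settling the integrality of $d$. The contrapositive of Proposition~\ref{pro:amaci-not-3} says that if $a+b+c+\alpha+\beta+\gamma \not\equiv 0 \pmod 3$, then $R/I$ has the weak Lefschetz property; since by hypothesis it does not, we must have $a+b+c+\alpha+\beta+\gamma \equiv 0 \pmod 3$, so $d = \frac{1}{3}(a+b+c+\alpha+\beta+\gamma)$ is an integer.

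Next I would show that the syzygy bundle is semistable and that $I$ is perfectly-punctured. The contrapositive of Proposition~\ref{pro:amaci-nss-wlp} gives that $\widetilde\syz{I}$ is semistable, for otherwise $R/I$ would have the weak Lefschetz property. In particular condition (i) of Proposition~\ref{pro:amaci-semistable} holds, so the four punctures of $T_d(I)$ (those associated to $x^a, y^b, z^c$, and $x^\alpha y^\beta z^\gamma$) have nonnegative side lengths $d-a,\ d-b,\ d-c,\ d-(\alpha+\beta+\gamma)$. Their sum is $4d - (a+b+c+\alpha+\beta+\gamma) = 4d - 3d = d$, whence $\mo_d(I) = d - d = 0$; that is, $I$ is perfectly-punctured. (Alternatively, this is precisely the content of the first assertion of Proposition~\ref{pro:amaci-balanced}.)

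Finally I would invoke Theorem~\ref{thm:tileable-semistable}. Conditions (i) ($I$ perfectly-punctured) and (iii) ($\widetilde\syz{I}$ semistable) now hold, so condition (ii) follows and $T = T_d(I)$ is tileable---provided $T$ is non-empty. To verify the latter, note that Proposition~\ref{pro:amaci-balanced} applies (semistable bundle, integral $d$) and, since $R/I$ fails the weak Lefschetz property, yields $\det Z(T_d(I)) = 0$ in $K$. As an empty region would carry the $0 \times 0$ matrix, whose determinant is $1$, the vanishing of the determinant forces $T$ to be non-empty; hence $T$ is non-empty and tileable, as claimed. The argument is essentially bookkeeping among prior results, and I anticipate no computational obstacle: the only genuine subtlety is recognizing that a failure of the property places us simultaneously in the integral, semistable, and perfectly-punctured case, so that both hypotheses (i) and (iii) of Theorem~\ref{thm:tileable-semistable} are available, and then discharging its mild non-emptiness requirement.
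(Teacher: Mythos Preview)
Your proof is correct and follows essentially the same route as the paper's own argument: use Propositions~\ref{pro:amaci-nss-wlp} and~\ref{pro:amaci-not-3} to force semistability and integrality of $d$, invoke Proposition~\ref{pro:amaci-balanced} for perfect puncturing, and conclude with Theorem~\ref{thm:tileable-semistable}. Your explicit verification of the non-emptiness hypothesis in Theorem~\ref{thm:tileable-semistable} (via $\det Z(T_d(I))=0$) is a detail the paper elides; it is sound, and in any case an empty region is tileable by definition, so the conclusion would survive either way.
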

\begin{proof}
    By Propositions~\ref{pro:amaci-nss-wlp} and~\ref{pro:amaci-not-3}, we know that the syzygy bundle of $I$ is
    semistable and $d = \frac{1}{3}(a+b+c+\alpha+\beta+\gamma)$ is an integer. Hence by Proposition~\ref{pro:amaci-balanced},
    $T_d(I)$ is perfectly-punctured. Now we conclude by Theorem~\ref{thm:tileable-semistable}.
\end{proof}

Before we analyze the presence of the weak Lefschetz property, we need to recall a special type of puncture that
has been previously studied by Ciucu, Eisenk\"olbl, Krattenthaler, and Zare~\cite{CEKZ}.

\begin{remark}\label{rem:axes-central}
    The central puncture is \emph{axes-central} if it is (approximately) equidistant from a corner puncture and the opposite
    wall, for each of the three punctures.  More specifically, suppose $A = d-a$, $B = d-b$, $C = d-c$, and $M = d-(\alpha + \beta + \gamma)$.
    There are two cases to consider:
    \begin{enumerate}
        \item If $A$, $B$, and $C$ have the same parity, then the region is of the form
            \[
                T_{A+B+C+M}(x^{B+C+M}, y^{A+C+M}, z^{A+B+M}, x^{\frac{1}{2}(B+C)} y^{\frac{1}{2}(A+C)} z^{\frac{1}{2}(A+B)}).
            \]
        \item If $A$ and $B$ differ in parity from $C$, then the region is of the form
            \[
                T_{A+B+C+M}(x^{B+C+M}, y^{A+C+M}, z^{A+B+M}, x^{\frac{1}{2}(B+C+1)} y^{\frac{1}{2}(A+C-1)} z^{\frac{1}{2}(A+B)}).
            \]
    \end{enumerate}
    \begin{figure}[!ht]
        \begin{minipage}[b]{0.48\linewidth}
            \centering
            \includegraphics[scale=1.0]{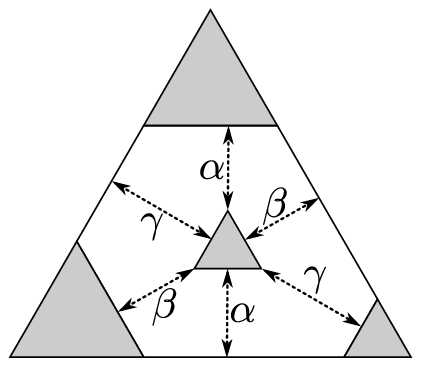}\\
            \emph{(i) The parity of $C$ agrees with $A$ and $B$.}
        \end{minipage}
        \begin{minipage}[b]{0.48\linewidth}
            \centering
            \includegraphics[scale=1.0]{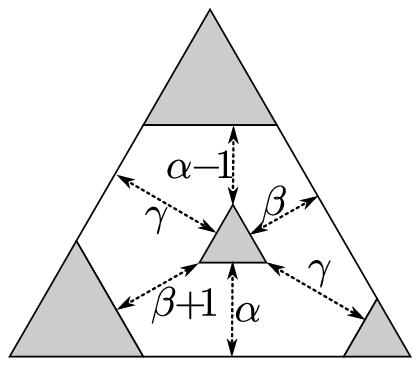}\\
            \emph{(ii) The parity of $C$ differs from $A$ and $B$.}
        \end{minipage}
        \caption{The two prototypical figures with axes-central punctures.}
        \label{fig:axes-central}
    \end{figure}

    The explicit signed enumerations for these regions can be found in~\cite[Theorems~1, 2, 4, \& 5]{CEKZ}.  However, the desired
    consequence for our use is that the signed enumeration is nonzero if and only if not all of $A$, $B$, and $C$ are odd.  Moreover,
    if it is nonzero, then the largest prime divisor of the enumeration is bounded above by $d - 1 = A+B+C+M-1$.
\end{remark}

Now, we can decide the presence of the weak Lefschetz property in almost all cases.

\begin{theorem} \label{thm:amaci-wlp}
    Let $I = I_{a,b,c,\alpha,\beta,\gamma} = (x^a, y^b, z^c, x^\alpha y^\beta z^\gamma)$ be an Artinian ideal with four
    minimal generators such that $\alpha$, $\beta$, and $\gamma$ are all positive. Assume the base field $K$ has
    characteristic zero, and consider the following conditions:
    \begin{enumerate}
        \item $\max\{a, b, c, \alpha + \beta + \gamma\} \leq d$;
        \item $\min\{\alpha + \beta + c, \alpha + b + \gamma, a + \beta + \gamma\} \geq d$;
        \item $\min\{a+b, a+c, b+c\} \geq d$; and
        \item $d = \frac{1}{3}(a+b+c+\alpha + \beta + \gamma)$ is an integer.
    \end{enumerate}
    Then the following statements hold:
        \begin{itemize}
            \item[(a)] If one of the conditions (i) - (iv) is not satisfied, then $R/I$ has the weak Lefschetz property.
            \item[(b)] Assume all the conditions (i) - (iv) are satisfied. Then:
            \begin{itemize}
                \item[(1)] The multiplication map $\times (x+y+z): [R/I]_{j-2} \to [R/I]_{j-1}$ has maximal rank whenever  $j \neq d$.
                \item[(2)] The algebra $R/I$ has the weak Lefschetz property if one of the following conditions is satisfied:
                \begin{itemize}
                    \item[(I)] Condition (ii) is an equality.
                    \item[(II)] $a+b+c+\alpha+\beta + \gamma$ is divisible by 6.
                    \item[(III)] $c = \frac{1}{2}(a+b+\alpha+\beta+\gamma)$.
                    \item[(IV)] The region $T_d(I)$ has an axes-central puncture (see Remark~\ref{rem:axes-central})
                        and one of $d-a, d-b, d-c$, and $d-(\alpha+\beta+\gamma)$ is not odd.
                    \item[(V)] $a = b$, $\alpha = \beta$, and $c$ or $\gamma$ is even.
                \end{itemize}
                \item[(3)] The algebra $R/I$ fails to have the weak Lefschetz property if one of the following
                conditions is satisfied:
                \begin{itemize}
                    \item[(IV')] The region $T_d(I)$ has an axes-central puncture (see Remark~\ref{rem:axes-central})
                        and all of $d-a, d-b, d-c$, and $d-(\alpha+\beta+\gamma)$ are odd; or
                    \item[(V')] $a = b$, $\alpha = \beta$, and both $c$ and $\gamma$ are odd.
                \end{itemize}
            \end{itemize}
    \end{itemize}
\end{theorem}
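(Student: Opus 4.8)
The plan is to reduce the whole statement to the two structural inputs on the region $T_d(I)$ and then to the single arithmetic question of whether the integer $\det Z(T_d(I))$ vanishes. For part (a), observe that conditions (i)--(iii) are exactly Brenner's semistability criterion (Proposition~\ref{pro:amaci-semistable}) and (iv) is the integrality of $d$. If one of (i), (ii), (iii) fails, the syzygy bundle is not semistable, so $R/I$ has the weak Lefschetz property by Proposition~\ref{pro:amaci-nss-wlp}; if (iv) fails, then $R/I$ has it by Proposition~\ref{pro:amaci-not-3}. For part (b) all four conditions hold, so Proposition~\ref{pro:amaci-balanced} tells us that $T_d(I)$ is balanced and perfectly-punctured, Theorem~\ref{thm:tileable-semistable} tells us it is tileable, and (the characteristic being zero) $R/I$ has the weak Lefschetz property if and only if $\det Z(T_d(I))\neq 0$ as an integer. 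I would record once and for all that $T_d(I)$ carries three corner punctures of side lengths $A=d-a$, $B=d-b$, $C=d-c$ together with one central puncture of side length $M=d-(\alpha+\beta+\gamma)$, that $A+B+C+M=d$, and that the three corner punctures are non-floating, so the central puncture is the only one that can float.

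For statement (b)(1) I would invoke Proposition~\ref{pro:amaci-props}: the socle degrees are $\alpha+b+c-3$, $a+\beta+c-3$, $a+b+\gamma-3$, and since conditions (ii) and (iii) force the punctures of $T_d(I)$ not to overlap, each of these is at least $d-2$. Thus $R/I$ has no socle below degree $d-2$, and by the mechanism of Proposition~\ref{pro:wlp} the multiplication map is forced to have maximal rank in every degree $j\neq d$ (injective below the twin-peak and surjective above it); by Corollary~\ref{cor:wlp-biadj} the only degree still in doubt is $j=d$, which is governed by $\det Z(T_d(I))$.

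It then remains to decide, in cases (b)(2) and (b)(3), whether $\det Z(T_d(I))$ vanishes. The key reinterpretation is that this integer is the \emph{signed} number of lozenge tilings of a triangular region with a central triangular hole, which is precisely the situation studied by Ciucu--Eisenk\"olbl--Krattenthaler--Zare. I would sort the cases by the tool used. When (ii) is an equality (case (I)), the central puncture touches a corner puncture and hence is itself non-floating, so $T_d(I)$ has no floating punctures and Proposition~\ref{prop:same-sign} gives $\per Z(T_d(I))=|\det Z(T_d(I))|\neq 0$; cases (II) and (III) impose a parity or a degeneration ($c=d$ collapses the $z$-corner puncture to side length $0$) under which one again verifies that every floating puncture has even side length and applies Proposition~\ref{prop:same-sign}, or else the region falls into a CEKZ family. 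Cases (IV) and (IV$'$) are the axes-central configurations of Remark~\ref{rem:axes-central}, whose signed enumeration is given by the explicit CEKZ product formulas and is nonzero exactly when not all of $A,B,C$ are odd --- yielding the property in (IV) and its failure in (IV$'$). Cases (V) and (V$'$) are the symmetric configurations $a=b$, $\alpha=\beta$: here $A=B$ and the central puncture lies on the axis of symmetry, and I would use the symmetry-axis analogue of the CEKZ enumeration --- or, for the vanishing in (V$'$), a sign-reversing involution on the tilings induced by the reflection --- to show $\det Z(T_d(I))$ is nonzero when $c$ or $\gamma$ is even and zero when both are odd.

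The hard part is precisely the evaluation of $\det Z(T_d(I))$ when the central puncture is a \emph{floating} puncture of \emph{odd} side length, for then Proposition~\ref{prop:same-sign} no longer applies and signed and unsigned tiling counts genuinely diverge. Handling these cases forces me to rely on the closed-form CEKZ product formulas for a triangular hole that is either axes-central or sits on a single symmetry axis, together with careful parity bookkeeping to read off when the product vanishes. Establishing the \emph{vanishing} in (IV$'$) and (V$'$) --- i.e.\ exhibiting the exact cancellation of positively and negatively signed tilings --- is the most delicate point; no characteristic subtleties intervene, since the hypothesis fixes characteristic zero.
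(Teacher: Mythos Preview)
Your overall architecture --- reduce (a) to Propositions~\ref{pro:amaci-semistable}, \ref{pro:amaci-nss-wlp}, \ref{pro:amaci-not-3}, reduce (b) to the single question $\det Z(T_d(I)) \stackrel{?}{=} 0$ via Proposition~\ref{pro:amaci-balanced}, and then handle (I)/(II) with Proposition~\ref{prop:same-sign}, (IV)/(IV$'$) with the CEKZ formulas, and (V)/(V$'$) with mirror symmetry --- is exactly what the paper does.

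There is, however, a genuine gap in your argument for (b)(1). Proposition~\ref{pro:wlp} does \emph{not} say that the multiplication map automatically has maximal rank for $j \neq d$; it says that \emph{if} it does (bijectivity at $j=d$, or injectivity at $j=d$ plus surjectivity at $j=d+1$), \emph{then} $R/I$ has the weak Lefschetz property. The socle bound you correctly extract only lets injectivity propagate \emph{downward} from a degree where you already have it, and surjectivity propagate upward likewise. When $R/I$ \emph{fails} the weak Lefschetz property --- precisely the case where (b)(1) has content --- the map $[R/I]_{d-2}\to[R/I]_{d-1}$ is not injective, so you have no base case to propagate from. The paper closes this gap with an argument that is not combinatorial at all: it invokes the Grauert--M\"ulich theorem to pin down the generic splitting type as $(-d-1,-d,-d+1)$ in the failure case (Proposition~\ref{pro:split-type-semist}), reads off $\reg J = d$ to get surjectivity for $j>d$, and then reads off $\dim_K[S/J]_{d-2}=3$ from the resulting free resolution of $J$ to match the Hilbert-function jump $\dim_K[R/I]_{d-2}-\dim_K[R/I]_{d-3}=3$ and conclude injectivity at $j=d-1$.

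A smaller point: your treatment of (III) is not right. Setting $c=d$ kills the $z$-corner puncture but does nothing to the parity of the central puncture's side length $M=d-(\alpha+\beta+\gamma)$, so Proposition~\ref{prop:same-sign} need not apply. The paper's argument is geometric: with $C=0$ one checks that the lozenges forced by the central puncture leave a hexagonal region, whose signed tiling count is the (never-vanishing) MacMahon product.
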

\begin{proof}
    Assertion (a) follows from Propositions~\ref{pro:amaci-semistable}, \ref{pro:amaci-nss-wlp}, and~\ref{pro:amaci-not-3}.

    Consider now the claims in part (b). Then Proposition~\ref{pro:amaci-balanced} gives that $R/I$ has the weak
    Lefschetz property if and only if $\det Z(T_d(I))$ is not zero.

    The assumptions in (b) guarantee that the punctures of $T = T_d (I)$ do not overlap and the degrees of the socle
    generators of $R/I$ are at least $d-2$. Then condition (I) implies that the puncture to the generator $x^\alpha y^\beta z^\gamma$
    touches another puncture, whereas condition (II) says that this puncture has an even side length. In either case,
    $R/I$ has the weak Lefschetz property by Proposition~\ref{prop:same-sign}.

    The proof of (b)(1) uses the Grauert-M\"ulich splitting theorem. We complete this part below
    Proposition~\ref{pro:split-type-semist}.

    The remaining assertions all follow from results in~\cite{CN-mirror-symmetry} and~\cite{CN-small-type}, when combined with
    Proposition~\ref{pro:amaci-balanced}:

    (III). The condition $c = \frac{1}{2}(a+b+\alpha+\beta+\gamma)$ is equivalent to $d-c = 0$. After taking  into account all lozenges forced by the puncture to $x^{\alpha} y^{\beta} z^{\gamma}$, the remaining subregion of $T_d (I)$ is a hexagon, and so $\det Z (T_d (I)) \neq 0$ (see, e.g., Proposition~\ref{prop:same-sign}).

    (IV) and (IV').   Use~\cite[Theorems~1, 2, 4, \& 5]{CEKZ}, as mentioned in Remark~\ref{rem:axes-central}.

    (V) and (V'). Use the results in~\cite{CN-mirror-symmetry}.
\end{proof}

Notice that Theorem~\ref{thm:amaci-wlp}(b)(1) says that, for almost monomial complete intersections, the multiplication map can fail to
have maximal rank in at most one degree.

\begin{remark}\label{rem:q-and-a}
    \begin{enumerate}
        \item Theorem~\ref{thm:amaci-wlp} can be extended to fields of sufficiently positive characteristic by using
            Proposition~\ref{pro:char-0-to-p}. This lower bound on the characteristic can be improved whenever one knows the
            determinant of $Z(T_d(I))$.
        \item Question~8.2(2c) in \cite{MMN-2011} asked if there exist non-level almost complete intersections which never
            have the weak Lefschetz property. The almost complete intersection $I = I_{3,5,5,1,2,2} = (x^3, y^5, z^5, xy^2z^2)$
            is not level and never has the weak Lefschetz property, regardless of field characteristic, as
            $\det{Z(T_6(I))} = 0$.
    \end{enumerate}
\end{remark}~

\subsection{Level almost complete intersections}\label{sub:level}~\par

In the previous subsection, we considered one way of centralizing the inner puncture of a triangular region
associated to a monomial almost complete intersection. We called such punctures ``axes-central.'' In this section, we
consider another method of centralizing the inner puncture of such a triangular region. It turns out this method of
centralization is equivalent to the algebra being level.

Consider the ideal $I = I_{a,b,c,\alpha,\beta,\gamma}$ as above. Let $d$ be an integer and assume that $T = T_d(I)$ has
one floating puncture. We say the inner puncture of $T$ is a \emph{gravity-central puncture}%
\index{puncture!gravity-central}
if the vertices of the puncture are each the same distance from the puncture opposite to it (see Figure~\ref{fig:gravity-central}).

\begin{figure}[!ht]
    \includegraphics[scale=1]{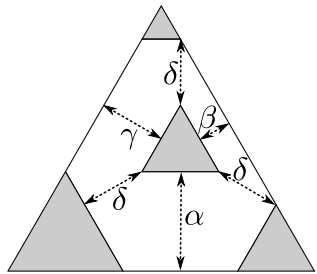}
    \caption{A prototypical figure with a gravity-central puncture.}
    \label{fig:gravity-central}
\end{figure}

\begin{lemma}
    Let $I = I_{a,b,c,\alpha,\beta,\gamma}$. Then $T_d(I)$ has a gravity-central puncture if and only if  $R/I$ is a level  algebra.
\end{lemma}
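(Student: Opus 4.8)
The plan is to translate the geometric condition ``gravity-central'' into the arithmetic condition $a - \alpha = b - \beta = c - \gamma$ and then quote Proposition~\ref{pro:amaci-props}. First I would fix the geometry of the four punctures of $T = T_d(I)$. The generator $x^a$ gives the corner puncture at the top, of side length $d-a$, whose lower edge sits $a$ rows above the bottom edge; cyclically, $y^b$ and $z^c$ produce the two bottom corner punctures. The floating inner puncture, associated to $x^\alpha y^\beta z^\gamma$, is the upward-pointing triangle of side length $M = d-(\alpha+\beta+\gamma)$ located $\alpha$ rows from the bottom, $\beta$ from the upper-right edge, and $\gamma$ from the upper-left edge. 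Its three vertices point toward the three corners of $\mathcal{T}_d$, and each corner puncture is the one lying ``opposite'' the corresponding vertex, with the distance measured along the axis perpendicular to the edge of $\mathcal{T}_d$ opposite that corner.

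Next I would compute the three distances appearing in the definition. Counting rows from the bottom edge, the top vertex of the inner puncture lies in row $\alpha + M = d - \beta - \gamma$, while the lower edge of the top ($x^a$) puncture lies in row $a$; hence the distance between them equals $a-(d-\beta-\gamma) = a + \beta + \gamma - d$. Measuring analogously along the two remaining axes (distance from the upper-right and the upper-left edge, respectively), the cyclic symmetry of the picture gives the distances from the other two vertices to the $y^b$- and $z^c$-punctures as $b + \alpha + \gamma - d$ and $c + \alpha + \beta - d$. All three quantities are nonnegative precisely because the inner puncture is floating, hence disjoint from the corner punctures.

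By definition the inner puncture is gravity-central exactly when these three distances coincide. Setting the first equal to the second gives $a + \beta = b + \alpha$, i.e. $a - \alpha = b - \beta$, and the second equal to the third gives $b - \beta = c - \gamma$; conversely these two equalities force all three distances to agree. Thus $T_d(I)$ has a gravity-central puncture if and only if $a - \alpha = b - \beta = c - \gamma$, which by Proposition~\ref{pro:amaci-props} is exactly the condition that $R/I$ be level. The only genuine obstacle is the first step: correctly reading off the three vertices of the inner puncture, identifying each with the corner puncture opposite to it, and selecting the correct axis along which to measure each distance. Once the picture is set up (as in Figure~\ref{fig:gravity-central}), the remaining algebra is immediate.
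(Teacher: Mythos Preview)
Your proof is correct and follows exactly the paper's strategy: write down the three vertex--to--puncture distances in terms of $a,b,c,\alpha,\beta,\gamma,d$, set them pairwise equal, reduce to $a-\alpha=b-\beta=c-\gamma$, and invoke Proposition~\ref{pro:amaci-props}. The only discrepancy is cosmetic: the paper records the three distances as $(d-b)+(d-c)-\alpha$, $(d-a)+(d-c)-\beta$, $(d-a)+(d-b)-\gamma$, whereas you obtain $a+\beta+\gamma-d$, $b+\alpha+\gamma-d$, $c+\alpha+\beta-d$; these come from different (but each internally consistent) readings of which axis to measure along, and the pairwise equalities collapse to the same condition $a-\alpha=b-\beta=c-\gamma$ in either case.
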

\begin{proof}
    The defining property for the distances is $(d-b) + (d-c) - \alpha = (d-a) + (d-c) - \beta = (d-a) + (d-b) - \gamma$.
    This is equivalent to the condition in Proposition~\ref{pro:amaci-props} that $R/I$ is level, i.e.,
    $a - \alpha = b - \beta = c - \gamma$.
\end{proof}

Level almost complete intersections were studied extensively in~\cite[Sections~6 and~7]{MMN-2011}. In particular,
Migliore, Mir\'o-Roig, and the second author proposed a conjectured characterization for the presence of the weak
Lefschetz property for such algebras. We recall it here, though we present it in a different, but equivalent, form to
better elucidate the reasoning behind it.

\begin{conjecture}{\cite[Conjecture~6.8]{MMN-2011}} \label{conj:level-wlp}
    Let $I = I_{\alpha+t,\beta+t, \gamma+t, \alpha,\beta,\gamma}$ be an ideal of $R = K[x,y,z]$, where $K$ has
    characteristic zero, $0 < \alpha \leq \beta \leq \gamma \leq 2(\alpha+\beta)$, $t \geq \frac{1}{3}(\alpha+\beta+\gamma)$,
    and $\alpha + \beta + \gamma$ is divisible by three. If $(\alpha,\beta,\gamma,t)$ is not $(2,9,13,9)$ or
    $(3,7,14,9)$, then $R/I$ fails to have the weak Lefschetz property if and only if $t$ is even, $\alpha + \beta + \gamma$
    is odd, and $\alpha = \beta$ or $\beta = \gamma$. Furthermore, $R/I$ fails to have the weak Lefschetz property in
    the two exceptional cases.
\end{conjecture}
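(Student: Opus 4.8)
The plan is to reduce the conjecture to deciding whether a single integer vanishes, and then to analyse that integer by lozenge--tiling enumeration. Since $a-\alpha=b-\beta=c-\gamma=t$, the ideal $I$ is level, so by the preceding lemma $T=T_d(I)$ carries a gravity-central inner puncture. Writing $s=\frac{1}{3}(\alpha+\beta+\gamma)\in\ZZ$, one finds $d=2s+t$, so $d$ is automatically an integer, and the four punctures of $T$ have side lengths $d-a$, $d-b$, $d-c$, and $M:=d-(\alpha+\beta+\gamma)=t-s$. A direct computation shows that the hypotheses $\gamma\le 2(\alpha+\beta)$ and $t\ge\frac{1}{3}(\alpha+\beta+\gamma)$ are precisely conditions (i)--(iii) of Theorem~\ref{thm:amaci-wlp} --- with (ii) strict, since $\alpha+\beta+c-d=s>0$ --- while $3\mid\alpha+\beta+\gamma$ gives condition (iv). Hence Theorem~\ref{thm:amaci-wlp}(b) applies: the punctures of $T$ are pairwise non-overlapping, and $R/I$ has the weak Lefschetz property if and only if $\det Z(T)\neq 0$ in $K$. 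As $\charf K=0$, this is the purely arithmetic question of whether the integer $\det Z(T)$ is nonzero.

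Next I would dispose of what the existing tools already give. If $M=t-s$ is even, then $T$ --- being perfectly-punctured with pairwise disjoint punctures, hence tileable by Corollary~\ref{cor:pp-tileable} --- has its unique floating puncture of even side length, so Proposition~\ref{prop:same-sign} yields $\det Z(T)\neq 0$ and $R/I$ has the weak Lefschetz property; since $M$ even negates the failure hypothesis (which forces $t$ even and $\alpha+\beta+\gamma$, hence $s$, odd), this is consistent with the conjecture and leaves only the range $M$ odd. For the implication that the stated condition forces failure I would use reflective symmetry: when $\alpha=\beta$ (equivalently $a=b$) the region is symmetric under the reflection exchanging its $x$- and $y$-corners, and Theorem~\ref{thm:amaci-wlp}(b)(3)(V'), built on~\cite{CN-mirror-symmetry}, shows that $R/I$ fails the weak Lefschetz property exactly when $c$ and $\gamma$ are both odd, which under $M$ odd means $t$ even and $\alpha+\beta+\gamma=2\alpha+\gamma$ odd. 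The case $\beta=\gamma$ follows by permuting the variables. This establishes the direction that the stated condition implies failure, covering all symmetric instances.

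The real content of the conjecture is the converse in the asymmetric range: for $\alpha<\beta<\gamma$ with $M$ odd one must prove $\det Z(T)\neq 0$ apart from the two sporadic tuples. I would attack this through the signed enumeration of lozenge tilings of $T$, which is a triangle with corner notches of sizes $d-a$, $d-b$, $d-c$ and a gravity-centered triangular hole of odd size $M$. Encoding tilings as families of non-intersecting lattice paths and evaluating the resulting Lindstr\"om--Gessel--Viennot determinant, one would try to adapt the enumeration machinery of Ciucu--Eisenk\"olbl--Krattenthaler--Zare~\cite{CEKZ} from the axes-central to the gravity-central placement, seeking a closed-form evaluation of $\det Z(T)$.

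The main obstacle is precisely this evaluation. In the axes-central setting of~\cite{CEKZ} a reflective symmetry factors the determinant and produces a product formula whose vanishing is transparent; in the gravity-central case with $M$ odd and $\alpha,\beta,\gamma$ distinct there is no such symmetry, the determinant need not factor into small integers, and its occasional vanishing is arithmetic rather than structural. Tellingly, both exceptional tuples $(2,9,13,9)$ and $(3,7,14,9)$ lie in the sub-range where $t$ is odd and have hole size $M=1$, so they are accidental zeros rather than members of a vanishing family --- which is exactly why no uniform product formula can decide the conjecture. I would therefore aim for a hybrid argument: a Dodgson-condensation or determinantal recursion in the spirit of~\cite{CEKZ} reducing the asymmetric case to boundary data, a sign-coherence (positivity) estimate forcing $\det Z(T)\neq 0$ outside an explicitly bounded finite family, and a finite verification over that family confirming that $(2,9,13,9)$ and $(3,7,14,9)$ are the only exceptions. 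Making the sign-coherence step unconditional --- so that it survives the cancellations introduced by the odd hole --- is where I expect the genuine difficulty to lie, and it is the reason the full conjecture remains open.
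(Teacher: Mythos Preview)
This statement is a \emph{conjecture}, and the paper does not prove it. The paper only records partial progress: Proposition~\ref{pro:level-wlp} establishes the weak Lefschetz property when $t$ and $\alpha+\beta+\gamma$ have the same parity, or when $t$ is odd and $\alpha=\beta=\gamma$ is even, and the subsequent remark states explicitly that Conjecture~\ref{conj:level-wlp} remains open in the asymmetric cases with $M=t-\tfrac{1}{3}(\alpha+\beta+\gamma)$ odd. Your first two paragraphs recover exactly this partial progress, by the same route the paper uses: the reduction to $\det Z(T_d(I))$ via Theorem~\ref{thm:amaci-wlp}(b), the $M$ even case via Proposition~\ref{prop:same-sign} (this is Theorem~\ref{thm:amaci-wlp}(b)(2)(II), equivalently Proposition~\ref{pro:level-wlp}(i), since $M$ even is equivalent to $t$ and $\alpha+\beta+\gamma$ having the same parity), and the mirror-symmetric cases via parts (V) and (V'). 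So up to that point you and the paper agree, and neither of you is claiming more than is actually proved.

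Your third and fourth paragraphs, however, are not a proof but a research program: adapt the Ciucu--Eisenk\"olbl--Krattenthaler--Zare machinery to the gravity-central geometry, hope for a condensation identity, bound the exceptional locus, and finish by finite check. You yourself flag that the sign-coherence step is where ``the genuine difficulty'' lies and that this ``is the reason the full conjecture remains open.'' That is an honest assessment, but it means your proposal does not close the gap any more than the paper does. There is no missing step to fix here so much as a missing theorem: no closed-form or positivity result for $\det Z(T)$ in the asymmetric, odd-hole, gravity-central case is currently available, and until one is, Conjecture~\ref{conj:level-wlp} stands as stated.
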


The necessity part of this conjecture was proven in \cite[Corollary~7.4]{MMN-2011}) by showing that $R/I$ does not have
the weak Lefschetz property if $t$ is even, $\alpha + \beta + \gamma$ is odd, and $\alpha = \beta$ or $\beta = \gamma$.
This result is covered by Theorem~\ref{thm:amaci-wlp}(b)(3)(V') because the region is mirror symmetric. It remained open
to establish the presence of the weak Lefschetz property. Theorem~\ref{thm:amaci-wlp} does this in many new cases.

\begin{proposition} \label{pro:level-wlp}
    Consider the ideal $I = I_{\alpha+t,\beta+t, \gamma+t, \alpha,\beta,\gamma}$ as given in
    Conjecture~\ref{conj:level-wlp}. Then $R/I$ has the weak Lefschetz property if one of the following
    conditions is satisfied:
    \begin{enumerate}
        \item $t$ and $\alpha + \beta + \gamma$ have the same parity; or
        \item $t$ is odd and $\alpha = \beta = \gamma$ is even.
    \end{enumerate}
\end{proposition}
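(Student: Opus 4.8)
The plan is to bring both cases under Theorem~\ref{thm:amaci-wlp}(b), which reduces the weak Lefschetz property to the non-vanishing of $\det Z(T_d(I))$, and then to exploit the very rigid puncture structure of a level region. Throughout write $I = I_{\alpha+t,\beta+t,\gamma+t,\alpha,\beta,\gamma}$ and $\alpha+\beta+\gamma = 3s$.

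First I would check that the hypotheses of Conjecture~\ref{conj:level-wlp} force all four conditions (i)--(iv) of Theorem~\ref{thm:amaci-wlp}. One computes $d = \frac13(a+b+c+\alpha+\beta+\gamma) = 2s+t$, so $3 \mid \alpha+\beta+\gamma$ gives (iv). The chain $\alpha \le \beta \le \gamma \le 2(\alpha+\beta)$ yields $\gamma \le 2s$ and hence $a,b,c \le d$, while $t \ge \frac13(\alpha+\beta+\gamma) = s$ gives $\alpha+\beta+\gamma = 3s \le 2s+t = d$; together these are (i), and (ii), (iii) follow in the same routine fashion. Thus the syzygy bundle is semistable by Proposition~\ref{pro:amaci-semistable}, and by Proposition~\ref{pro:amaci-balanced} the region $T := T_d(I)$ is balanced and perfectly-punctured, with $R/I$ enjoying the weak Lefschetz property if and only if $\det Z(T) \ne 0$; moreover $T$ is tileable by Theorem~\ref{thm:tileable-semistable}. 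The key structural point is that a level region has exactly four punctures: the three corner punctures, which touch the boundary of $\mathcal{T}_d$ and are therefore non-floating, and the central puncture associated to $x^\alpha y^\beta z^\gamma$, which (since $\alpha,\beta,\gamma>0$) is the only puncture that can be floating, of side length $d-(\alpha+\beta+\gamma) = t-s$.

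For case (1) I would observe that since $\alpha+\beta+\gamma = 3s \equiv s \pmod 2$, the assumption that $t$ and $\alpha+\beta+\gamma$ share parity says precisely that $t-s$ is even, i.e.\ the central puncture has even side length. Hence every floating puncture of the tileable region $T$ has even side length, and Proposition~\ref{prop:same-sign} gives $\det Z(T) \ne 0$, so $R/I$ has the weak Lefschetz property. (Within case (1) the sub-case $t$ even forces $s$ even and is already covered by Theorem~\ref{thm:amaci-wlp}(b)(2)(II); the genuinely new content is the sub-case where $t$ and $\alpha+\beta+\gamma$ are both odd, where the floating puncture is still even.) For case (2), the hypothesis $\alpha=\beta=\gamma$ gives $a=b$ and $\alpha=\beta$, and $\gamma=\alpha$ is even, so $T$ is the fully symmetric region and Theorem~\ref{thm:amaci-wlp}(b)(2)(V) applies verbatim; equivalently one invokes the mirror-symmetry enumerations of~\cite{CN-mirror-symmetry}.

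The main obstacle is really the bookkeeping of the second paragraph: one must verify carefully that the constraints of Conjecture~\ref{conj:level-wlp} land $I$ squarely in case (b) of Theorem~\ref{thm:amaci-wlp}, so that tileability and the reduction to $\det Z(T)$ are available, since everything afterward is a one-line parity computation plus a citation. The single new idea beyond the theorem is the identification in case (1) of ``$t$ and $\alpha+\beta+\gamma$ have equal parity'' with ``the floating puncture has even side length,'' which is exactly what allows Proposition~\ref{prop:same-sign} to fire even when $t$ is odd, thereby settling cases the explicit sub-conditions of Theorem~\ref{thm:amaci-wlp} do not name directly.
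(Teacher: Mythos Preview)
Your argument is correct and follows the same overall route as the paper: place $I$ in case~(b) of Theorem~\ref{thm:amaci-wlp}, compute the side length $t-\tfrac{1}{3}(\alpha+\beta+\gamma)$ of the inner puncture, and then invoke an appropriate sub-condition. For part~(i) the paper simply cites Theorem~\ref{thm:amaci-wlp}(b)(II), whereas you unpack this into a direct appeal to Proposition~\ref{prop:same-sign} via the observation that the sole floating puncture has even side length; the substance is identical, and your explicit treatment of the sub-case ``$t$ and $\alpha+\beta+\gamma$ both odd'' is a welcome clarification. For part~(ii) the paper notes that $\alpha=\beta=\gamma$ makes the inner puncture axes-central and invokes (b)(IV), while you exploit the mirror symmetry $a=b,\ \alpha=\beta$ with $\gamma$ even and cite (b)(V); both sub-conditions apply to this fully symmetric region, so the difference is only in which earlier result one prefers to quote.
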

\begin{proof}
    We apply Theorem~\ref{thm:amaci-wlp} with $d = t + \frac{2}{3} (\alpha + \beta + \gamma)$. Then the side length of
    the inner puncture of $T_d(I)$ is $t - \frac{1}{3} (\alpha + \beta + \gamma)$. Hence (i) follows from
    Theorem~\ref{thm:amaci-wlp}(b)(II). Claim (ii) is a consequence of Theorem~\ref{thm:amaci-wlp}(b)(IV) as the given
    condition implies the inner puncture is axes-central.
\end{proof}

\begin{remark}
    Conjecture~\ref{conj:level-wlp} remains open in two cases, both of which are conjectured to have the weak Lefschetz property:
    \begin{enumerate}
        \item $t$ even, $\alpha + \beta + \gamma$ is odd, and $\alpha < \beta < \gamma$; and
        \item $t$ odd, $\alpha + \beta + \gamma$ is even, and $\alpha \le \beta$ or $\beta \le \gamma$.
    \end{enumerate}
\end{remark}

Notice that $T = T_d(I_{a,b,c,\alpha,\beta,\gamma})$ is simultaneously axis- and gravity-central precisely if either
$a = b = c$ and $\alpha = \beta = \gamma$, or $a = b+2 = c+1$ and $\alpha=\beta+2=\gamma+1$. In the former case, the weak
Lefschetz property in characteristic zero is completely characterized below, strengthening
\cite[Corollary~7.6]{MMN-2011}.

\begin{corollary}
    Let $I = I_{a, a, a, \alpha, \alpha, \alpha} = (x^a, y^a, z^a, x^{\alpha}, y^{\alpha}, z^{\alpha})$, where $a > \alpha$.
    Then $R/I$ fails to have the weak Lefschetz property in characteristic zero if and only if $\alpha$ and $a$ are odd
    and $a \geq 2 \alpha + 1$.
\end{corollary}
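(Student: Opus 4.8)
The plan is to specialize Theorem~\ref{thm:amaci-wlp} to the fully symmetric almost complete intersection $I = I_{a,a,a,\alpha,\alpha,\alpha} = (x^a, y^a, z^a, x^\alpha y^\alpha z^\alpha)$ and to collapse its list of numerical conditions into the single arithmetic criterion claimed. First I would record the data attached to $I$. Since $b = c = a$ and $\beta = \gamma = \alpha$, we have $d = \frac{1}{3}(a+b+c+\alpha+\beta+\gamma) = a + \alpha$, which is automatically an integer, so condition (iv) of Theorem~\ref{thm:amaci-wlp} always holds. The four puncture side lengths are $d - a = d - b = d - c = \alpha$ and $d - (\alpha+\beta+\gamma) = a - 2\alpha$.

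Next I would dispose of conditions (i)--(iii). Condition (ii) reads $a + 2\alpha \ge d = a + \alpha$ and condition (iii) reads $2a \ge a + \alpha$; both are automatic from $\alpha > 0$ and $a > \alpha$. Condition (i) reads $\max\{a, 3\alpha\} \le a + \alpha$, whose only nontrivial part $3\alpha \le a + \alpha$ is equivalent to $a \ge 2\alpha$. So the argument splits. If $a < 2\alpha$, then (i) fails and Theorem~\ref{thm:amaci-wlp}(a) immediately yields that $R/I$ has the weak Lefschetz property. If $a \ge 2\alpha$, then all of (i)--(iv) hold and I would pass to part (b).

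In the case $a \ge 2\alpha$ I would exploit the mirror symmetry of the region. As $a = b$ and $\alpha = \beta$, conditions (V) and (V') of Theorem~\ref{thm:amaci-wlp} apply, with the roles of $c$ and $\gamma$ played by $a$ and $\alpha$. Thus if both $a$ and $\alpha$ are odd, condition (V') fires and $R/I$ \emph{fails} to have the weak Lefschetz property, while if at least one of $a,\alpha$ is even, condition (V) fires and $R/I$ \emph{has} the weak Lefschetz property. As a cross-check one may instead invoke the axes-central conditions (IV)/(IV'): the region is axes-central of type (i) in Remark~\ref{rem:axes-central} with $A = B = C = \alpha$ and $M = a - 2\alpha$, and the requirement that all four side lengths $\alpha,\alpha,\alpha,a-2\alpha$ be odd is again exactly that $\alpha$ and $a$ both be odd.

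Finally I would assemble the two cases: $R/I$ fails the weak Lefschetz property precisely when $a \ge 2\alpha$ and both $a$ and $\alpha$ are odd. Since an odd $a$ can never equal the even number $2\alpha$, under the parity hypothesis the bound $a \ge 2\alpha$ is equivalent to $a \ge 2\alpha + 1$, which produces the stated criterion. The step requiring the most care --- and the one I regard as the main (if modest) obstacle --- is the boundary bookkeeping at $a = 2\alpha$: there $M = 0$, the central puncture degenerates to a point, and one must confirm that the theorem genuinely returns ``weak Lefschetz property holds'' in this degenerate configuration. It does, via condition (V), because $a = 2\alpha$ forces $a$ to be even; this is exactly what guarantees that the strict inequality $a \ge 2\alpha + 1$ is the correct threshold in the failure criterion. (It is also worth verifying directly in a small instance, e.g.\ comparing $I_{3,3,3,1,1,1}$, where the map degenerates, with $I_{4,4,4,1,1,1}$, where it does not, to be sure the parities line up as claimed.)
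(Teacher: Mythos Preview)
Your proof is correct and follows essentially the same line as the paper's: split on whether $a < 2\alpha$ (so condition (i) fails and Theorem~\ref{thm:amaci-wlp}(a) applies) or $a \ge 2\alpha$ (so all of (i)--(iv) hold and one invokes part (b)). The only cosmetic difference is that for the positive direction in the case $a \ge 2\alpha$ the paper cites Proposition~\ref{pro:level-wlp} (the level specialization), whereas you appeal directly to the mirror-symmetric clause (V); both routes ultimately rest on Theorem~\ref{thm:amaci-wlp}, and your direct use of (V)/(V') is arguably a bit cleaner here.
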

\begin{proof}
    If $a < 2 \alpha$, then $R/I$ has the weak Lefschetz property by Theorem~\ref{thm:amaci-wlp}(a).

    Assume now $a \geq 2 \alpha$. Then $R/I$ fails the weak Lefschetz property if $\alpha$ and $a$ are odd by
    \cite[Corollary~7.6]{MMN-2011} (or Theorem~\ref{thm:amaci-wlp}(b)(3)(V')). Otherwise, $R/I$ has this property by
    Proposition~\ref{pro:level-wlp}.
\end{proof}

For $a \geq 2 \alpha$, the triangular region $T_{a+\alpha} (I)$ was considered by Krattenthaler in \cite{Kr-06}. He
described a bijection between cyclically symmetric lozenge tilings of the region and descending plane partitions with
specific conditions.

~\subsection{Splitting type and regularity}~\par

The generic splitting type of a vector bundle on projective space is an important invariant. However, its computation is
often challenging. In this section we consider the splitting type of the syzygy bundles of monomial almost complete
intersections in $R$. These are rank three bundles on the projective plane. For the remainder of this section we assume
$K$ is an infinite field.

Let $I = I_{a,b,c,\alpha,\beta,\gamma}$ as above. Recall from Section~\ref{sub:syz} that the syzygy module $\syz{I}$ of
$I$ is defined by the exact sequence
\begin{equation*}
        0
    \longrightarrow
        \syz{I}
    \longrightarrow
        R(-\alpha-\beta-\gamma) \oplus R(-a) \oplus R(-b) \oplus R(-c)
    \longrightarrow
        I
    \longrightarrow
        0,
\end{equation*}
and the syzygy bundle $\widetilde\syz{I}$ on $\PP^2$ of $I$ is the sheafification of $\syz{I}$. Its restriction to any
line $H$ of $\PP^2$ splits as $\SO_H(p) \oplus \SO_H(q) \oplus \SO_H(r)$. The triple $(p, q, r)$ depends on the choice
of the line $H$, but is the same for all general lines. This latter triple is called the \emph{generic splitting type}
of $\widetilde\syz{I}$. Since $I$ is a monomial ideal, Proposition~\ref{pro:mono} implies that the
generic splitting type $(p, q, r)$ can be determined if we restrict to the line defined by $\ell = x+ y + z$.

For computing the generic splitting type of $\widetilde\syz{I}$, we use the observation that $R/(I, \ell) \cong S/J$,
where $S = K[x,y]$, and $J = (x^a, y^b, (x+y)^c, x^\alpha y^\beta (x+y)^\gamma)$. Define an $S$-module $\syz{J}$ by the
exact sequence
\begin{equation} \label{eqn:syz-J}
        0
    \longrightarrow
        \syz{J}
    \longrightarrow
        S(-\alpha-\beta-\gamma) \oplus S(-a) \oplus S(-b) \oplus S(-c)
    \longrightarrow
        J
    \longrightarrow
        0
\end{equation}
using the, possibly non-minimal, set of generators $\{x^a, y^b, (x+y)^c, x^\alpha y^\beta (x+y)^\gamma\}$ of $J$. Then
$\syz{J} \cong S(p) \oplus S(q) \oplus S(r)$, where $(p, q, r)$ is the generic splitting type of the vector bundle
$\widetilde\syz{I}$. The Castelnuovo-Mumford regularity of the ideal $J$ is $\reg{J}= 1 + \reg S/J$.

For later use we record the following facts.

\begin{remark} \label{rem:splitting-type}
    Adopt the above notation. Then the following statements hold:
    \begin{enumerate}
        \item Using, for example,  the Sequence~(\ref{eqn:syz-J}), one gets  $-(p + q + r) = a+b+c+\alpha+\beta+\gamma$.
        \item If any of the generators of $J$ is extraneous, then the degree of that generator is one of $-p$, $-q$, or $-r$.
        \item As the regularity of $J$ is determined by the Betti numbers of $S/J$, we obtain that
            $\reg{J} + 1 = \max\{-p,-q,-r\}$ if the Sequence~(\ref{eqn:syz-J}) is a minimal free resolution of $J$.
    \end{enumerate}
\end{remark}

Before moving on, we prove a technical but useful lemma.

\begin{lemma} \label{lem:reg-2AMACI}
    Let $S = K[x,y]$, where $K$ is a field of characteristic zero. Consider the ideal $\fa = (x^a, y^b, x^\alpha y^\beta (x+y)^\gamma)$
    of $S$, and assume that the given generating set is minimal. Then $\reg{\fa}$ is
    \[
            -1 + \max \left \{a+ \beta, b+\alpha, \min \left \{a+b, a+ \beta + \gamma, b+ \alpha + \gamma,
                \left\lceil \frac{1}{2}(a+b+ \alpha + \beta + \gamma)\right\rceil  \right \} \right\}.
    \]
\end{lemma}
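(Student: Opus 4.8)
The plan is to compute $\reg \fa$ as one more than the top nonzero degree of the Artinian algebra $S/\fa$, and to read that degree off from an explicit formula for the Hilbert function of $S/\fa$. Since $x^a, y^b \in \fa$, the algebra $S/\fa$ is Artinian, so its only nonvanishing local cohomology is $H^0_{\mathfrak m}(S/\fa) = S/\fa$, concentrated in cohomological degree $0$. The local cohomology description of Castelnuovo--Mumford regularity therefore collapses to $\reg (S/\fa) = \max\{j : [S/\fa]_j \neq 0\}$, the top nonzero degree $t$ of $S/\fa$, and $\reg \fa = \reg (S/\fa) + 1 = t+1$. Thus it suffices to determine $t$.

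To compute the Hilbert function of $S/\fa$, write $\mathfrak b = (x^a, y^b)$, $g = x^\alpha y^\beta (x+y)^\gamma$, and $\delta = \alpha+\beta+\gamma$. Multiplication by $g$ gives the exact sequence
\[
0 \longrightarrow (S/(\mathfrak b : g))(-\delta) \xrightarrow{\ \times g\ } S/\mathfrak b \longrightarrow S/\fa \longrightarrow 0,
\]
so that $h_{S/\fa}(j) = h_{S/\mathfrak b}(j) - h_{S/(\mathfrak b:g)}(j-\delta)$. The colon simplifies as $\mathfrak b : g = (x^{a-\alpha}, y^{b-\beta}) : (x+y)^\gamma$, using the monomial identity $(x^a,y^b):x^\alpha y^\beta = (x^{a-\alpha}, y^{b-\beta})$ together with associativity of colons. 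Because $\charf K = 0$, the monomial complete intersection $S/(x^{a-\alpha}, y^{b-\beta})$ has the strong Lefschetz property with $x+y$ a Lefschetz element, so $\times (x+y)^\gamma$ has maximal rank in every degree; hence $h_{S/(\mathfrak b:g)}(k) = \min(h'(k), h'(k+\gamma))$, where $h'$ is the explicit unimodal Hilbert function of $S/(x^{a-\alpha}, y^{b-\beta})$. Substituting and discarding the dominated terms (those that can never attain the minimum) leaves
\[
h_{S/\fa}(j) = \min(j{+}1,\, a,\, b,\, a{+}b{-}1{-}j) - \min(j{-}\delta{+}1,\, a{-}\alpha,\, b{-}\beta,\, a{+}b{-}1{-}j),
\]
each block read as $0$ once its minimum is nonpositive.

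Finally I would locate the largest $j$ with $h_{S/\fa}(j) > 0$. Both blocks share the falling term $a+b-1-j$, and a short comparison shows $h_{S/\fa}(j) > 0$ precisely when $\min(j-\delta+1,\, a-\alpha,\, b-\beta) < a+b-1-j$, i.e. when at least one of $2j-\delta+1$, $j+(a-\alpha)$, $j+(b-\beta)$ is smaller than $a+b-1$. Since each of these is increasing in $j$, the largest admissible $j$ is the maximum of the three individual thresholds, which evaluates to $t = \max\{\lceil \tfrac{1}{2}(a+b+\alpha+\beta+\gamma)\rceil,\, a+\beta,\, b+\alpha\} - 2$; the ceiling is exactly the integer crossing point produced by the slope-two term $2j-\delta+1$. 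Hence $\reg \fa = t+1 = -1 + \max\{a+\beta,\, b+\alpha,\, \lceil\tfrac{1}{2}(a+b+\alpha+\beta+\gamma)\rceil\}$. To match the displayed expression I would then note that minimality of the generating set forces $\alpha+\beta+\gamma \le a+b-2$, so $a+b$ is redundant inside the inner minimum, and that whenever $a+\beta+\gamma$ or $b+\alpha+\gamma$ drops below $\lceil\tfrac{1}{2}(a+b+\alpha+\beta+\gamma)\rceil$ the complementary quantity $b+\alpha$, respectively $a+\beta$, is at least that ceiling; thus the two maxima coincide.

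The main obstacle is the piecewise-linear bookkeeping in this last step: identifying which competing constraint is active in each configuration, handling the parity through the ceiling at the balanced crossing, and --- crucially --- confirming that $h_{S/\fa}(j)$ is genuinely positive (not merely nonnegative) at the claimed boundary degree, including the degenerate configurations where some of $\alpha,\beta,\gamma$ vanish. Establishing the equivalence of the simplified maximum with the displayed nested min--max via the minimality constraints is the other point that requires care.
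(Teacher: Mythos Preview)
Your approach is correct (under the same implicit assumption $\alpha,\beta\ge 1$ that the paper's proof also needs for its first step) but takes a genuinely different route from the paper's proof.

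The paper uses the short exact sequence induced by multiplication by the \emph{monomial factor} $x^\alpha y^\beta$:
\[
0 \longrightarrow \bigl[S/(x^{a-\alpha},y^{b-\beta},(x+y)^\gamma)\bigr](-\alpha-\beta) \xrightarrow{\;\times x^\alpha y^\beta\;} S/\fa \longrightarrow S/(x^a,y^b,x^\alpha y^\beta) \longrightarrow 0.
\]
Because the outer terms are Artinian, their top nonzero degrees add, giving $\reg\fa = \max\{\alpha+\beta+\reg(x^{a-\alpha},y^{b-\beta},(x+y)^\gamma),\ \reg(x^a,y^b,x^\alpha y^\beta)\}$. The right-hand regularity is $-1+\max\{a+\beta,b+\alpha\}$ from the Taylor resolution, and the left-hand one is computed via the strong Lefschetz property of $S/(x^{a-\alpha},y^{b-\beta})$, yielding exactly the inner $\min$ in the statement. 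Thus the nested $\max/\min$ structure drops out immediately from the decomposition.

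You instead multiply by the \emph{full} generator $g=x^\alpha y^\beta(x+y)^\gamma$, obtaining the sequence $0\to S/(\mathfrak b{:}g)(-\delta)\to S/\mathfrak b\to S/\fa\to 0$, and compute the Hilbert function of $S/(\mathfrak b{:}g)$ via the strong Lefschetz property. This forces you into a threshold analysis of the piecewise-linear function $h_{S/\fa}$, and then a separate reconciliation of your answer $-1+\max\{a+\beta,b+\alpha,\lceil\tfrac12(a+b+\alpha+\beta+\gamma)\rceil\}$ with the displayed formula. Both are valid; the paper's choice of multiplier makes the max/min structure of the stated formula transparent without the final bookkeeping, while your route has the bonus of showing that the inner $\min$ is in fact redundant: the regularity equals the simpler expression $-1+\max\{a+\beta,\,b+\alpha,\,\lceil\tfrac12(a+b+\alpha+\beta+\gamma)\rceil\}$.

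One caution on your ``short comparison'': the equivalence $h_{S/\fa}(j)>0 \Leftrightarrow \min(j-\delta+1,\,a-\alpha,\,b-\beta)<a+b-1-j$ really does use $\alpha,\beta\ge 1$. For instance with $(a,b,\alpha,\beta,\gamma)=(2,5,0,1,1)$ and $j=3$ both blocks equal $2$ so $h_{S/\fa}(3)=0$, yet your inequality reads $2<3$. This is the degenerate case you flag; since the paper's Step~1 formula $\reg(x^a,y^b,x^\alpha y^\beta)=-1+\max\{a+\beta,b+\alpha\}$ likewise fails when $\alpha=0$ or $\beta=0$, this is not a defect of your argument relative to the paper's.
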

\begin{proof}
    We proceed in three steps.

    First, considering the minimal free resolution of the ideal $(x^a, y^b, x^\alpha y^\beta)$, we conclude
    \[
        \reg (x^a, y^b, x^\alpha y^\beta) = -1 + \max \{a+ \beta, b+\alpha\}.
    \]

    Second, the algebra $S/(x^a, y^b)$ has the strong Lefschetz property in characteristic zero (see, e.g.,
    \cite[Proposition~4.4]{HMNW}). Thus, the Hilbert function of $S/(x^a, y^b, (x+y)^\gamma)$ is
    \[
        \dim_K{[S/(x^a, y^b, (x+y)^\gamma)]_j} = \max\{0, \dim_K{[S/(x^a, y^b)]_j} - \dim_K{[S/(x^a,y^b)]_{j-\gamma}}\}.
    \]
    By analyzing when the difference becomes non-positive, we get that
    \begin{equation}\label{eq:reg-restr-ci}
        \reg (x^a, y^b, (x+y)^\gamma) = -1 + \min \left \{a+b, a+ \gamma, b+\gamma, \left\lceil \frac{1}{2}(a+b+\gamma)\right\rceil \right \}.
    \end{equation}

    Third, notice that
    \[
        (x^a, y^b, x^\alpha y^\beta (x+y)^\gamma):x^\alpha y^\beta = (x^{a-\alpha}, y^{b-\beta}, (x+y)^\gamma).
    \]
    Hence, multiplication by $x^\alpha y^\beta$ induces the short exact sequence
    \[
        0 \rightarrow [S/(x^{a-\alpha}, y^{b-\beta}, (x+y)^\gamma)](-\alpha-\beta) \stackrel{\times x^\alpha y^\beta}{\longrightarrow}
        S/\fa \rightarrow S/(x^a, y^b, x^\alpha y^\beta) \rightarrow 0.
    \]
    It implies
    \[
        \reg{\fa} = \max\{\alpha + \beta + \reg{(x^{a-\alpha}, y^{b-\beta}, (x+y)^\gamma)},
        \reg{(x^a, y^b, x^\alpha y^\beta)}\}.
    \]

    Using the first two steps, the claim follows.
\end{proof}

Recall that Proposition~\ref{pro:amaci-semistable} gives a characterization of the semistability of the syzygy bundle
$\widetilde\syz{I_{a,b,c,\alpha,\beta,\gamma}}$, using only the parameters $a$, $b$, $c$, $\alpha$, $\beta$, and
$\gamma$. We determine the splitting type of $\widetilde\syz{I_{a,b,c,\alpha,\beta,\gamma}}$ for the nonsemistable and
the semistable cases separately.

~\subsubsection{Nonsemistable syzygy bundle}~

We first consider the case when the syzygy bundle is not semistable, and therein we distinguish four cases. It turns out
that in three cases, at least one of the generators of the ideal $J$ is extraneous.

\begin{proposition} \label{pro:st-nss}
    Consider the ideal $I = I_{a,b,c,\alpha,\beta,\gamma} = (x^a, y^b, z^c, x^\alpha y^\beta z^\gamma)$ with four
    minimal generators. Assume that the base field $K$ has characteristic zero and, without loss of generality, that
    $a \leq b \leq c$. Set $d := \frac{1}{3}(a+b+c+\alpha+\beta+\gamma)$, and denote by $(p, q, r)$ the generic splitting
    type of $\widetilde\syz{I}$. Assume that $\widetilde\syz{I}$ is not semistable. Then:
    \begin{enumerate}
        \item If $\min \{\alpha + \beta + \gamma, c\} \geq a+b -1$, then
            \[
                (p, q, r) = (-c, -\alpha - \beta - \gamma, -a-b).
            \]
        \item Assume $\min \{\alpha + \beta + \gamma, c\} \leq a+b -2$ and
            \[
                \frac{1}{2}(a+b+ c) \leq \min \left \{a+ \beta + \gamma, b+ \alpha + \gamma, c + \beta + \gamma,
                    \frac{1}{2}(a+b+ \alpha + \beta + \gamma) \right \}.
            \]
            Then
            \[
                (p, q, r) = (-\alpha-\beta-\gamma, - \left\lceil \frac{1}{2}(a+b+c) \right\rceil,
                    - \left\lfloor \frac{1}{2}(a+b+c) \right\rfloor).
            \]
        \item Assume $\min \{\alpha + \beta + \gamma, c\} \leq a+b -2$ and
            \[
                \frac{1}{2}(a+b+ \alpha + \beta + \gamma)  \leq \min \left \{a+ \beta + \gamma, b+ \alpha + \gamma,
                    c + \beta + \gamma, \frac{1}{2}(a+b+ c)  \right \}.
            \]
            Then
            \[
                (p, q, r) =  (-c, q, -a-b-\alpha-\beta-\gamma+q),
            \]
            where $- q = \min \left \{a+ \beta + \gamma, b+ \alpha + \gamma,
            \left\lceil \frac{1}{2}(a+b+ \alpha + \beta + \gamma)\right\rceil  \right \}$.
        \item Assume $\min \{\alpha + \beta + \gamma, c\} \leq a+b -2$ and
            \begin{equation*}
                \begin{split}
                    -s = \min \left \{a+ \beta + \gamma, b+ \alpha + \gamma, c + \beta + \gamma  \right \} < \hspace*{5cm} \\
                    \min \left \{ \frac{1}{2}(a+b+ \alpha + \beta + \gamma), \frac{1}{2}(a+b+ c) \right \}.
                \end{split}
            \end{equation*}
            Then
            \begin{equation*}
                (p, q, r) = \left (  \left\lfloor \frac{1}{2}(-3d-s) \right\rfloor,  \left\lceil \frac{1}{2}(-3 d - s) \right\rceil, s \right).
            \end{equation*}
    \end{enumerate}
\end{proposition}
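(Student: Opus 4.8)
The plan is to compute $(p,q,r)$ by restricting $\widetilde\syz I$ to the general line $\{x+y+z=0\}$, where everything becomes a resolution problem over $S = K[x,y]$. By Proposition~\ref{pro:mono} and the isomorphism $R/(I,\ell) \cong S/J$ with $J = (x^a, y^b, (x+y)^c, x^\alpha y^\beta(x+y)^\gamma)$, it suffices to resolve $J$: since $S$ is a two-dimensional regular ring, the module $\syz J$ defined by Sequence~(\ref{eqn:syz-J}) is free of rank three, and its twists are exactly $(p,q,r)$. Throughout I would use the three bookkeeping facts of Remark~\ref{rem:splitting-type}: the sum rule $-(p+q+r) = 3d$; the fact that the degree of any extraneous generator of $J$ appears verbatim among $-p,-q,-r$; and, when the given generating set is minimal, the identity $\reg J + 1 = \max\{-p,-q,-r\}$. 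The nonsemistability hypothesis enters through Proposition~\ref{pro:amaci-semistable}: negating its conditions produces exactly the inequalities that organize the four cases and control which generators of $J$ survive minimally.

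The computational engine consists of resolutions of ideals with at most three generators, all governed by the strong Lefschetz property of $S/(x^a,y^b)$ in characteristic zero. A complete intersection $(x^a,y^b)$ is resolved by the Koszul complex; the ideal $(x^a,y^b,(x+y)^c)$ is handled by the Hilbert-function computation behind Equation~(\ref{eq:reg-restr-ci}); and the almost complete intersection $\fa = (x^a,y^b,x^\alpha y^\beta(x+y)^\gamma)$ is resolved through the colon identity $\fa : x^\alpha y^\beta = (x^{a-\alpha}, y^{b-\beta}, (x+y)^\gamma)$ from the proof of Lemma~\ref{lem:reg-2AMACI}, which yields $\reg\fa$ and hence both syzygy degrees of $\fa$ via the constraint that they sum to $a+b+\alpha+\beta+\gamma$. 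With these in hand I would settle cases (i)--(iii) by locating the extraneous generator. In (i), the bound $\min\{\alpha+\beta+\gamma,c\}\ge a+b-1$ places both $(x+y)^c$ and $x^\alpha y^\beta(x+y)^\gamma$ beyond the socle degree $a+b-2$ of $S/(x^a,y^b)$, so $J=(x^a,y^b)$ and the two extraneous degrees $-c,-\alpha-\beta-\gamma$ join the Koszul syzygy $-a-b$. In (ii), one shows $x^\alpha y^\beta(x+y)^\gamma$ is extraneous, so $J=(x^a,y^b,(x+y)^c)$ contributes the balanced pair $-\lceil\tfrac12(a+b+c)\rceil,-\lfloor\tfrac12(a+b+c)\rfloor$ alongside $-\alpha-\beta-\gamma$. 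In (iii), it is $(x+y)^c$ that is extraneous, giving $p=-c$ and leaving the two syzygies of $\fa=J$, which the case hypotheses force into the balanced regime of Lemma~\ref{lem:reg-2AMACI}.

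Case (iv) is the substantive one. As in case (iii), the hypotheses typically render $(x+y)^c$ extraneous, so that $J = \fa$; if instead all four generators are minimal, one resolves $J$ by a short exact sequence obtained by adjoining the mixed generator, in the spirit of the colon computation of Lemma~\ref{lem:reg-2AMACI}. What changes is the balance: the defining inequality $-s < \min\{\tfrac12(a+b+\alpha+\beta+\gamma),\tfrac12(a+b+c)\}$ forces the \emph{unbalanced} regime of Lemma~\ref{lem:reg-2AMACI}, in which $\reg\fa$ is governed by $a+\beta$ or $b+\alpha$ rather than by the half-degree term. One syzygy degree is then pinned to the smallest linkage degree $-s = -\min\{a+\beta+\gamma, b+\alpha+\gamma, c+\beta+\gamma\}$, the value recording the lowest-degree relation between the mixed generator and a pure power. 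The remaining two twists sum to $-3d-s$ by the sum rule, and the crux is to show they equal $\lfloor\tfrac12(-3d-s)\rfloor$ and $\lceil\tfrac12(-3d-s)\rceil$, i.e., are balanced to within one; I would establish this by combining $\reg J$ from Lemma~\ref{lem:reg-2AMACI} (which fixes $\max\{-p,-q,-r\}$) with the sum rule and the case inequalities.

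The main obstacle I anticipate is precisely this case-(iv) balance. Unlike cases (i)--(iii), where an extraneous generator immediately splits off one twist and reduces the problem to a known small resolution, here the interplay between $(x+y)^c$ and the mixed generator must be tracked carefully: one must determine which term dominates in the regularity formula of Lemma~\ref{lem:reg-2AMACI}, confirm the value of the forced twist $s$, and then prove the delicate inequality certifying that the remaining pair is balanced to within one. This is where the nonsemistability hypothesis does real work, since the negated conditions of Proposition~\ref{pro:amaci-semistable} are exactly calibrated to guarantee the dominance and balance needed, and to exclude the near-balanced outcome characteristic of the semistable case treated in Proposition~\ref{pro:split-type-semist}.
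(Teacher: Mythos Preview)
Your treatment of cases (i)--(iii) matches the paper's proof closely: identify an extraneous generator of $J$, reduce to a two- or three-generated ideal whose resolution is known from the Koszul complex, Equation~(\ref{eq:reg-restr-ci}), or Lemma~\ref{lem:reg-2AMACI}, and read off the splitting type via Remark~\ref{rem:splitting-type}. That part is fine.

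Case (iv), however, is handled quite differently in the paper, and your direct approach has a genuine gap. The paper does \emph{not} compute $\reg J$ at all in this case. Instead it invokes \cite[Theorem~6.3]{Br} to identify $-\mu = s$ as the maximal slope of a subsheaf of $\widetilde\syz I$, so that the line subbundle $\mathcal{O}_{\PP^2}(s)$ is the first step of the Harder--Narasimhan filtration. The quotient $\mathcal{E}$ is then a semistable rank-two sheaf with $c_1(\mathcal{E}) = -3d - s$, and the Grauert--M\"ulich theorem applied to $\mathcal{E}^{**}$ immediately gives the balanced pair $(\lfloor\tfrac12(-3d-s)\rfloor,\lceil\tfrac12(-3d-s)\rceil)$. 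The balance thus comes for free from semistability of the quotient, not from any regularity computation.

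Your plan to extract the balance from $\reg J$ via Lemma~\ref{lem:reg-2AMACI} breaks down precisely where you hedge: when all four generators of $J$ are minimal. Example~\ref{exa:st-nss-4mingen} is such an instance, with $(a,b,c,\alpha,\beta,\gamma) = (4,5,5,3,1,1)$ and $-s = a+\beta+\gamma = 6$. There $(x+y)^c$ is \emph{not} extraneous, so $J \neq \fa$; applying Lemma~\ref{lem:reg-2AMACI} to $\fa$ anyway gives $\reg\fa + 1 = b+\alpha = 8$, which would yield the splitting type $(-8,-5,-6)$ rather than the correct $(-7,-6,-6)$. Your suggested fix --- ``resolve $J$ by a short exact sequence obtained by adjoining the mixed generator'' --- is not concrete enough to recover the right regularity, and you would additionally need an independent argument that no syzygy of degree strictly less than $-s$ exists (the paper gets this from Brenner's slope result). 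The Harder--Narasimhan/Grauert--M\"ulich route is what makes case (iv) go through cleanly.
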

\begin{proof}
    Set
    \begin{equation*}
        \mu = \min \left \{a+b, a+ \beta + \gamma, b+ \alpha + \gamma, c + \beta + \gamma,   \frac{1}{2}(a+b+ \alpha + \beta + \gamma), \frac{1}{2}(a+b+ c) \right \}.
    \end{equation*}

    Using $a \leq b \leq c$, \cite[Theorem 6.3]{Br} implies that the maximal slope of a subsheaf of $\widetilde\syz{I}$
    is $-\mu$. Since $\widetilde\syz{I}$ is not semistable, we have $\mu < d$ (see
    Proposition~\ref{pro:amaci-semistable}). Moreover, the generic splitting type of $\widetilde\syz{I}$ is determined
    by the minimal free resolution of $J = (x^a, y^b, (x+y)^c, x^\alpha y^\beta (x+y)^\gamma)$ as a module over $S = K[x,
    y]$. We combine both approaches to determine the generic splitting type.

    Since $\reg (x^a, y^b) = a+b-1$, all polynomials in $S$ whose degree is at least $a+b-1$ are contained in $(x^a,
    y^b)$. Hence, $J = (x^a, y^b)$ if $\min \{\alpha + \beta + \gamma, c\} \geq a+b -1$, and the claim in case (i)
    follows by Remark~\ref{rem:splitting-type}.

    For the remainder of the proof, assume $\min \{\alpha + \beta + \gamma, c\} \leq a+b -2$. Then $a+b > \frac{1}{2}(a+b+ c)$,
    and thus $\mu \neq a+b$.

    In case (ii), it follows that $\mu = \frac{1}{2}(a+b+ c)$ and $c \leq \alpha + \beta + \gamma$, and thus $c \leq a+b-2$.
    Using Equation \eqref{eq:reg-restr-ci}, we conclude that
    \[
        \reg (x^a, y^b, (x+y)^c) = -1 + \min \left \{a+b,  \left\lceil \frac{1}{2}(a+b+c)\right\rceil \right \} = -1 + \left\lceil \frac{1}{2}(a+b+c)\right\rceil.
    \]
    Observe now that $d > \mu = \frac{1}{2}(a+b+ c)$ is equivalent to $\alpha + \beta + \gamma > \frac{1}{2}(a+b+ c)$.
    This implies $\alpha + \beta + \gamma > \reg (x^a, y^b, (x+y)^c)$, and thus $J = (x^a, y^b, (x+y)^c)$. Using
    Remark~\ref{rem:splitting-type} again, we get the generic splitting type of $\widetilde\syz{I}$ as claimed in (ii).

    Consider now case (iii). Then $d > \mu = \frac{1}{2}(a+b+ \alpha + \beta + \gamma)$, which gives $c >
    \frac{1}{2}(a+b+ \alpha + \beta + \gamma)$. The second assumption in this case also implies $\frac{1}{2}(a+b+ \alpha
    + \beta + \gamma) \leq a + \beta + \gamma$, which is equivalent to $b+\alpha \leq a + \beta + \gamma$ and also to $b +
    \alpha \leq \frac{1}{2}(a+b+ \alpha + \beta + \gamma)$. Similarly, we have that $\frac{1}{2}(a+b+ \alpha + \beta +
    \gamma) \leq b+\alpha + \gamma$, which is equivalent to $a + \beta \leq b+ \alpha + \gamma$ and also to $a + \beta \le
    \frac{1}{2}(a+b+ \alpha + \beta + \gamma)$. It follows that
    \[
        \max \{a+\beta, b+ \alpha \} \leq  \min \left \{a+ \beta + \gamma, b+ \alpha + \gamma, \frac{1}{2}(a+b+ \alpha + \beta + \gamma)  \right \}.
    \]
    Hence Lemma~\ref{lem:reg-2AMACI} yields
    \begin{equation*}
        \begin{split}
            \reg (x^a, y^b,  x^\alpha y^\beta (x+y)^\gamma) =   \hspace*{9.7cm} \\
            -1 +   \min \left \{a+ \beta + \gamma, b+ \alpha + \gamma, \left\lceil \frac{1}{2}(a+b+ \alpha + \beta + \gamma)\right\rceil  \right \} < c.
        \end{split}
    \end{equation*}
    This shows that $(x+y)^c \in (x^a, y^b, x^\alpha y^\beta (x+y)^\gamma) = J$. Setting $- q = 1 + \reg J$,
    Remark~\ref{rem:splitting-type} provides the generic splitting type in case (iii).

    Finally consider case (iv). Then $\mu = -s$, and $\mu$ is equal to the degree of the least common multiple of two of
    the minimal generators of $I$. In fact, $-\mu = s$ is the slope of the syzygy bundle ${\mathcal O}_{\PP^2}(s)$ of
    the ideal generated by these two generators. Thus, the Harder-Narasimhan filtration (see
    \cite[Definition~1.3.2]{HM}) gives an exact sequence
    \[
        0 \to {\mathcal O}_{\PP^2}(s) \to \widetilde\syz{I} \to {\mathcal E} \to 0,
    \]
    where ${\mathcal E}$ is a semistable torsion-free sheaf on $\PP^2$ of rank two and first Chern class $-a-b-c -
    \alpha - \beta - \gamma -s = -3d -s$. Its bidual ${\mathcal E}^{**}$ is a stable vector bundle. Thus, by the theorem
    of Grauert and M\"ulich (see \cite{GM} or \cite[Corollary 1 of Theorem 2.1.4]{OSS}), its generic splitting type is
    $( \left\lfloor \frac{1}{2}(-3d-s) \right\rfloor, \left\lceil \frac{1}{2}(-3 d - s) \right\rceil)$. Now the claim
    follows by restricting the above sequence to a general line of $\PP^2$.
\end{proof}

We have seen that the ideal $J = (x^a, y^b, (x+y)^c, x^\alpha y^\beta (x+y)^\gamma)$ has at most three minimal
generators in the cases (i) - (iii) of the above proposition. In the fourth case, the associated ideal $J \subset S$ may
be minimally generated by four polynomials.

\begin{example} \label{exa:st-nss-4mingen}
    Consider the ideal
    \[
        I = I_{4,5,5,3,1,1} = (x^4, y^5, z^5, x^3yz).
    \]
    Then the corresponding ideal $J$ is minimally generated by $x^4, y^5, (x+y)^5$, and $x^3y(x+y)$. The syzygy bundle
    of $\widetilde\syz{I}$ is not semistable, and its generic splitting type is $(-7, -6, -6)$ by
    Proposition~\ref{pro:st-nss}(iv).
\end{example}~

\subsubsection{Semistable syzygy bundle}~

Order the entries of the generic splitting type $(p,q,r)$ of the semistable syzygy bundle $\widetilde\syz{I}$ such
that $p \leq q \leq r$. In this case, the splitting type determines the presence of the weak Lefschetz property if
the characteristic of $K$ is zero (see \cite[Theorem 2.2]{BK}). The following result is slightly more precise.

\begin{proposition}\label{pro:split-type-semist}
    Let $K$ be a field of characteristic zero, and assume the ideal $I = I_{a,b,c,\alpha,\beta,\gamma}$ has a semistable
    syzygy bundle. Set $k = \left\lfloor \frac{1}{3}(a+b+c+\alpha+\beta+\gamma) \right\rfloor$. Then the generic
    splitting type of $\widetilde\syz{I}$ is
    \begin{equation*}
        (p, q, r) =
        \begin{cases}
            (-k-1,-k,-k) & \text{if } a+b+c+\alpha+\beta+\gamma = 3k+1;\\
            (-k-1,-k-1,-k) & \text{if } a+b+c+\alpha+\beta+\gamma = 3k+2; \\
            (-k,-k,-k) & \text{if } a+b+c+\alpha+\beta+\gamma = 3k \text{ and} \\
                       & \text{$R/I$ has the weak Lefschetz property}; \\
            (-k-1,-k,-k+1) & \text{if } a+b+c+\alpha+\beta+\gamma = 3k \text{ and} \\
                       & \text{$R/I$ fails to have the weak Lefschetz property}.
        \end{cases}
    \end{equation*}
\end{proposition}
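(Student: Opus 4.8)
The plan is to reduce the entire statement to a computation of the generic splitting type of the rank three semistable bundle $E = \widetilde\syz{I}$ on $\PP^2$ and to control that type with the Grauert--M\"ulich theorem. By Remark~\ref{rem:splitting-type}(i) the entries satisfy $p+q+r = -(a+b+c+\alpha+\beta+\gamma)$, and since $\charf K = 0$ and $E$ is semistable, the theorem of Grauert and M\"ulich (see \cite{GM} or \cite[Corollary~1 of Theorem~2.1.4]{OSS}) forces the sorted triple $p \le q \le r$ to be \emph{balanced}, i.e.\ $q-p \le 1$ and $r-q \le 1$. First I would dispose of the two non-divisible cases: if $a+b+c+\alpha+\beta+\gamma = 3k+1$ or $3k+2$, then there is a unique balanced triple with the prescribed sum, namely $(-k-1,-k,-k)$ respectively $(-k-1,-k-1,-k)$, so these follow immediately with no appeal to the weak Lefschetz property.

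The remaining case is $a+b+c+\alpha+\beta+\gamma = 3k$, where balancedness leaves exactly two candidates, $(-k,-k,-k)$ and $(-k-1,-k,-k+1)$. Here $d = k$ is an integer and equals $\tfrac13(a+b+c+\alpha+\beta+\gamma)$, so Proposition~\ref{pro:amaci-balanced} applies: $T_d(I)$ is perfectly-punctured and balanced, the socle degrees of $R/I$ are at least $d-2$, and $R/I$ has the weak Lefschetz property if and only if the ``peak'' map $\times\ell\colon [R/I]_{k-2} \to [R/I]_{k-1}$ is bijective (balancedness makes this a square map, so maximal rank means bijective). Thus it suffices to show that the peak map is bijective exactly when the splitting type is $(-k,-k,-k)$.

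To connect the splitting type to this map I would pass to cohomology. The defining sequence of $E$ gives $H^1(\PP^2, E(t)) \cong [R/I]_t$ as graded $R$-modules, so the peak map is identified with $\times\ell\colon H^1(E(k-2)) \to H^1(E(k-1))$. Restricting $0 \to E(k-2) \xrightarrow{\ell} E(k-1) \to E|_H(k-1) \to 0$ to the line $H = \{\ell = 0\}$ --- whose splitting type is the generic one by Proposition~\ref{pro:mono} --- and using that $E(k-1)$ is semistable of negative slope $-1$, whence $H^0(E(k-1)) = 0$, the long exact sequence yields $\ker(\times\ell) \cong H^0(H, E|_H(k-1))$. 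I would then compute this on $\PP^1$: for $(-k,-k,-k)$ one has $E|_H(k-1) \cong \SO_H(-1)^{\oplus 3}$ with no sections, whereas for $(-k-1,-k,-k+1)$ one has $E|_H(k-1) \cong \SO_H(-2) \oplus \SO_H(-1) \oplus \SO_H(0)$, which has a one-dimensional space of sections. Since the peak map is square, it is bijective in the first case and non-injective in the second, giving the required equivalence and hence the last two lines of the statement.

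I expect the main obstacle to be precisely this last step: making the identification $H^1_*(E) \cong R/I$ and the compatibility of multiplication by $\ell$ rigorous, and confirming that the kernel of the peak map is measured exactly by $H^0(E|_H(k-1))$, which in turn rests on the vanishing $H^0(E(k-1)) = 0$ coming from semistability together with negative slope. By contrast, the cases $3k+1$ and $3k+2$ are immediate consequences of Grauert--M\"ulich and the value of $c_1(E)$, requiring no further input.
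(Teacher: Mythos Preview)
Your proof is correct, and the first two cases (sum $\equiv 1,2 \pmod 3$) are handled identically to the paper: Grauert--M\"ulich plus the constraint $p+q+r = -(a+b+c+\alpha+\beta+\gamma)$ pins down a unique balanced triple.

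For the case $a+b+c+\alpha+\beta+\gamma = 3k$ you take a genuinely different route. The paper stays in commutative algebra: it passes to $J = (x^a, y^b, (x+y)^c, x^\alpha y^\beta (x+y)^\gamma) \subset S = K[x,y]$, observes via the semistability condition (i) that all generators of $J$ have degree less than $d$, so the syzygy sequence is a \emph{minimal} free resolution and hence $\reg J + 1 = \max\{-p,-q,-r\}$; then $\reg J = d$ exactly for the type $(-d-1,-d,-d+1)$, and $\reg J \ge d$ is equivalent to $[S/J]_{d-1} \ne 0$, i.e.\ the peak map fails surjectivity, i.e.\ (since it is square) WLP fails. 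Your argument is cohomological on $\PP^2$: you identify $H^1(E(t)) \cong [R/I]_t$, use the restriction sequence to $H = \{\ell = 0\}$ and the vanishing $H^0(E(k-1)) = 0$ (semistable of slope $-1$) to get $\ker(\times\ell) \cong H^0(E|_H(k-1))$, and then read off the answer from the two candidate splittings on $\PP^1$. Both are short; the paper's version is more elementary and reuses the dictionary already set up in Remark~\ref{rem:splitting-type}, while yours is more geometric and would transplant verbatim to other rank-three semistable syzygy bundles without knowing anything about the restricted ideal $J$. Either way one still needs the socle bound (from the proof of Proposition~\ref{pro:amaci-balanced}, via Proposition~\ref{pro:wlp}) to reduce WLP to bijectivity of the single peak map, which you invoke correctly.
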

\begin{proof}
    The Grauert-M\"ulich theorem \cite{GM} gives that $r - q$ and $q - p$ are both nonnegative and at most 1. Moreover,
    $p, q$, and $r$ satisfy $a+b+c+\alpha+\beta+\gamma = -(p+q+r)$ (see Remark~\ref{rem:splitting-type}(i)). This gives
    the result if $k \neq d = \frac{1}{3}(a+b+c+\alpha+\beta+\gamma)$.

    It remains to consider the case when $k = d$. Then $(-k,-k,-k) $ and $(-k-1,-k,-k+1)$ are the only possible generic
    splitting types. By Proposition~\ref{pro:amaci-semistable}(i), the minimal generators of the ideal
    $J = (x^a, y^b, (x+y)^c, x^\alpha y^\beta (x+y)^\gamma)$ have degrees that are less than $d$. Hence $\reg J = d$ if and
    only if the splitting type of $\widetilde\syz{I}$ is $(-d-1, -d , -d+1)$. Since $\dim_K [R/I]_{d-2} = \dim_K [R/I]_{d-1}$,
    using Proposition~\ref{pro:amaci-balanced}, we conclude that $\reg J \geq d$ if and only if $R/I$ does not have the
    weak Lefschetz property.
\end{proof}

We are ready to add the missing piece in the proof of Theorem~\ref{thm:amaci-wlp}.

\begin{proof}[Completion of the proof of Theorem~\ref{thm:amaci-wlp}(b)(1)]
    \mbox{ }

    We have just seen that the ideal $J = (x^a, y^b, (x+y)^c, x^\alpha y^\beta (x+y)^\gamma)$ has regularity $d$ if
    $R/I$ fails the weak Lefschetz property. This implies that the multiplication map $\times (x+y+x): [R/I]_{j-2} \to [R/I]_{j-1}$
    is surjective whenever $j > d$. Moreover, since the minimal generators of $J$ have degrees that are less than $d$,
    we have the exact sequence
    \begin{equation*}
            0
        \longrightarrow
            S(-d+1) \oplus S(-d) \oplus S(-d-1)
        \longrightarrow
            S(-\alpha-\beta-\gamma) \oplus S(-a) \oplus S(-b) \oplus S(-c)
        \longrightarrow
            J
        \longrightarrow
            0.
    \end{equation*}

    In the above proof of Theorem~\ref{thm:amaci-wlp} we saw that the four punctures of $T_d (I)$ do not overlap and
    that $T_d(I)$ is balanced. Hence $T_{d-1} (I)$ has 3 more downward-pointing than upward-pointing triangles, that is,
    \[
        \dim_K [R/I]_{d-2} = \dim_K [R/I]_{d-3} + 3.
    \]
    It follows that the multiplication map in the exact sequence
    \[
        [R/I]_{d-3} \longrightarrow [R/I]_{d-2} \longrightarrow S/J \longrightarrow 0
    \]
    is injective because $\dim_K [S/J]_{d-2} = 3$. Hence $\times (x+y+x): [R/I]_{j-2} \to [R/I]_{j-1}$ is injective whenever $j \leq d-1$.
\end{proof}

The second author would like to thank the authors of \cite{MMMNW}; it was during a conversation in the preparation of that paper that he
learned about the use of the Grauert-M\"ulich theorem for an alternative way of deducing the injectivity of the map
$[R/I]_{d-3} \longrightarrow [R/I]_{d-2}$ in the above argument if the characteristic of $K$ is zero.

\begin{example} \label{exa:syzygy}
    Consider the ideal $I_{7,7,7,3,3,3} = (x^7, y^7, z^7, x^3 y^3 z^3)$. It never has the weak Lefschetz property, by
    Theorem~\ref{thm:amaci-wlp}(vii). The bundle $\widetilde\syz{I_{7,7,7,3,3,3}}$ has
     generic splitting type  $(-11, -10, -9)$.
    Notice that the similar ideal $I_{6,7,8,3,3,3} = (x^6, y^7, z^8, x^3 y^3 z^3)$ has the weak Lefschetz property
    in characteristic zero as $\det{N_{6,7,8,3,3,3}} = -1764$. The generic splitting type of
    $\widetilde\syz{I_{6,7,8,3,3,3}}$ is $(-10,-10, -10)$.
\end{example}

We summarize part of our results for the case where $I$ is associated to a tileable triangular region.
In particular, if $K$ is an infinite field of arbitrary characteristic, then the splitting type can be
used to determine the presence of the weak Lefschetz property.

\begin{theorem} \label{thm:equiv}
    Let $I = I_{a,b,c,\alpha,\beta,\gamma} \subset R = K[x,y,z]$, where $K$ is an infinite field of arbitrary
    characteristic. Assume $I$ satisfies conditions (i)--(iv) in Theorem~\ref{thm:amaci-wlp} and
    $d := \frac{1}{3}(a+b+c+\alpha+\beta+\gamma)$ is an integer.
    Then the following conditions are equivalent:
    \begin{enumerate}
        \item The algebra $R/I$ has the weak Lefschetz property.
        \item The determinant of $Z(T_d(I))$ (i.e., the enumeration of signed perfect matchings of the bipartite graph $G(T_d(I)$) is not
            zero in $K$.
        \item The generic splitting type of $\widetilde\syz{I}$ is $(-d,-d,-d)$.
    \end{enumerate}
\end{theorem}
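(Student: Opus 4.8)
The plan is to prove the two equivalences $(1)\Leftrightarrow(2)$ and $(2)\Leftrightarrow(3)$ separately, arranging both arguments so that they are insensitive to $\charf K$; this is the whole point, since the only delicate inputs of the characteristic-zero treatment (semistability and the Grauert--M\"ulich theorem, used in Proposition~\ref{pro:split-type-semist}) must be avoided. First I would record the purely combinatorial consequences of the hypotheses. Conditions (i)--(iii) are exactly the numerical conditions invoked in the proof of Proposition~\ref{pro:amaci-balanced}, so that proof applies verbatim and independently of the characteristic to show that the four punctures of $T = T_d(I)$ do not overlap, that $T$ is perfectly-punctured and hence balanced, and that the minimal socle generators of $R/I$ have degree at least $d-2$.

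For $(1)\Leftrightarrow(2)$ I would use Proposition~\ref{pro:mono} (valid since $K$ is infinite) to reduce the weak Lefschetz property to maximal rank of $\times(x+y+z)$, whose matrix is the transpose of $Z(T_d(I))$. Because $T$ is balanced, $Z(T_d(I))$ is a square matrix, so maximal rank in degree $d$ means precisely $\det Z(T_d(I))\neq 0$. Since $\soc(R/I)$ starts in degree $\ge d-2$, Proposition~\ref{pro:wlp}(ii) shows that bijectivity of $\times(x+y+z)\colon[R/I]_{d-2}\to[R/I]_{d-1}$ already forces the weak Lefschetz property; conversely the weak Lefschetz property forces full rank in that degree. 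Hence $R/I$ has the weak Lefschetz property if and only if $\det Z(T_d(I))\neq 0$, and every ingredient holds over an arbitrary field.

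For $(2)\Leftrightarrow(3)$ I would pass to $S=K[x,y]$ through $R/(I,\ell)\cong S/J$ with $J=(x^a,y^b,(x+y)^c,x^\alpha y^\beta(x+y)^\gamma)$. Balancedness gives $\det Z(T_d(I))\neq 0$ iff $\times(x+y+z)$ is surjective in degree $d$ iff $[S/J]_{d-1}=0$; and since the Hilbert function of the standard graded algebra $S/J$ is supported on an initial segment of $\ZZ$, the vanishing $[S/J]_{d-1}=0$ is equivalent to $\deg H_{S/J}\le d-2$. Recall from the setup of this subsection that $\syz J\cong S(p)\oplus S(q)\oplus S(r)$ (the free rank-three module of Sequence~(\ref{eqn:syz-J})), where $(p,q,r)$ is the generic splitting type of $\widetilde\syz I$ ordered $p\le q\le r$, and that $p+q+r=-3d$ by Remark~\ref{rem:splitting-type}(i); in particular $p\le -d$. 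Splicing Sequence~(\ref{eqn:syz-J}) with $0\to J\to S\to S/J\to 0$ yields a (possibly non-minimal) free resolution of $S/J$, whence $H_{S/J}(t)=P(t)/(1-t)^2$ with $P(t)=1-t^a-t^b-t^c-t^{\alpha+\beta+\gamma}+t^{-p}+t^{-q}+t^{-r}$, so that $\deg H_{S/J}=\deg P-2$. Every exponent occurring in $P$ is at most $-p$, while all four generator degrees are at most $d$ by condition (i). Thus if $p=-d$ then $\deg P\le d$ and $[S/J]_{d-1}=0$, whereas if $p\le-d-1$ then $-p\ge d+1$ exceeds all generator degrees, the term $t^{-p}$ survives, $\deg P=-p$, and $[S/J]_{d-1}\neq 0$. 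Since $p+q+r=-3d$, the equality $p=-d$ is exactly the assertion that the splitting type equals $(-d,-d,-d)$, giving $(2)\Leftrightarrow(3)$.

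I expect the main obstacle to be not either equivalence in isolation but the bookkeeping that guarantees characteristic independence: one must check that the freeness of $\syz J$, the identity $p+q+r=-3d$, and the Hilbert-series degree count never invoke the semistability or Grauert--M\"ulich input of Proposition~\ref{pro:split-type-semist}. The one genuinely delicate point is the possible cancellation of the top term of $P(t)$ when $-p=d$; I would dispatch it by observing that the crude bound $\deg P\le -p$, which holds regardless of any cancellation, already forces $\deg H_{S/J}\le d-2$ in the case $p=-d$, so the dichotomy on the sign of $p+d$ is clean and no finer analysis of which generators have degree exactly $d$ is needed.
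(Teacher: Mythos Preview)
Your proof is correct and follows the same overall architecture as the paper's: both reduce (1)$\Leftrightarrow$(2) to Propositions~\ref{pro:wlp} and~\ref{pro:mono} together with the balancedness/socle observations from Proposition~\ref{pro:amaci-balanced}, and both reduce (2)$\Leftrightarrow$(3) to the question of whether $[S/J]_{d-1}=0$, decided by comparing $\max\{-p,-q,-r\}$ to $d$ using $p+q+r=-3d$.

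The only difference is in packaging the last step. The paper phrases it via regularity, invoking $\reg J+1=\max\{-p,-q,-r\}$ from Remark~\ref{rem:splitting-type}(iii); you instead read off $\deg H_{S/J}=\deg P-2$ directly from the Hilbert series numerator $P(t)$. These are the same computation, but your version has a small advantage: the Hilbert series identity holds for the possibly non-minimal resolution in Sequence~(\ref{eqn:syz-J}) without further comment, whereas Remark~\ref{rem:splitting-type}(iii) carries a minimality hypothesis that, under conditions (i)--(iv), need not literally be satisfied (a generator of degree exactly $d$ could be extraneous). Your cancellation analysis---that when $-p\ge d+1$ the top term $t^{-p}$ cannot be killed by any generator term, while when $-p=d$ the crude bound $\deg P\le d$ already suffices---handles this cleanly.
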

\begin{proof}
    Regardless of the characteristic of $K$, the arguments for Proposition~\ref{pro:amaci-balanced} show that $T_d(I)$
    is balanced. Moreover, the degrees of the socle generators of $R/I$ are at least $d-2$ as shown in
    Theorem~\ref{thm:amaci-wlp}(b)(1). Hence, Proposition~\ref{pro:wlp} gives that $R/I$ has the weak
    Lefschetz property if and only if the multiplication map
    \[
        \times (x+y+z): [R/I]_{d-2} \to [R/I]_{d-1}
    \]
    is bijective. Now, Corollary~\ref{cor:wlp-biadj} yields the equivalence of Conditions (i) and (ii).

    As above, let $(p, q, r)$ be the generic splitting type of $\widetilde\syz{I}$, where $p \leq q \leq r$, and let
    $J \subset S$ be the ideal such that $R/(I, x+y+z) \cong S/J$. The above multiplication map is bijective if and only if
    $\reg J = d-1$. Since $\reg J + 1 = -r$ and $p+q+r = -3d$, it follows that $\reg J = d-1$ if and only if $(p, q, r)
    = (-d, -d, -d)$. Hence, conditions (i) and (iii) are equivalent.
\end{proof}


\end{document}